\documentclass[a4paper,12pt]{article}
\title{Motivic zeta function of the Hilbert schemes of points on a surface}
\author{Luigi Pagano}

\usepackage{CommandsAndStyle}
\usepackage{titlesec}

\titleformat{\subsubsection}[runin]{\bfseries}{\thesubsubsection}{1em}{}[]

\begin{document}

\maketitle

\abstract{Let $K$ be a discretely-valued field. Let $X\rightarrow \Spec K$ be a surface with trivial canonical bundle. In this paper we construct a weak Néron model of the schemes $\Hilb^n(X)$ over the ring of integers $R\subseteq K$. We exploit this construction in order to compute the Motivic Zeta Function of $\Hilb^n(X)$ in terms of $Z_X$. We determine the poles of $Z_{\Hilb^n(X)}$ and study its monodromy property, showing that if the monodromy conjecture holds for $X$ then it holds for $\Hilb^n(X)$ too. 

\begin{center}
{\bfseries Excerptum}
\end{center}

Sit $K$ corpus cum absoluto ualore discreto. Sit $ X\rightarrow \Spec K$ leuigata superficies cum canonico fasce congruenti $\pazocal{O}_X$. In hoc scripto defecta Neroniensia paradigmata $\Hilb^n(X)$ schematum super annulo integrorum in $K$ corpo, $R \subset K$, constituimus. Ex hoc, Functionem Zetam Motiuicam $Z_{\Hilb^n(X)}$, dato $Z_X$, computamus. Suos polos statuimus et suam monodromicam proprietatem studemus, coniectura monodromica, quae super $X$ ualet, ualere super $\Hilb^n(X)$ quoque demostrando.}

\section*{Introduction}

The Motivic Zeta Function, introduced by Denef and Loeser, is a powerful invariant attached to hypersurface singularities. This invariant is a formal series with coefficients in a Grothendieck ring of varieties which has been proved to be rational in \cite{NS07}, in a sense that will become clear in \S \ref{sect:intrmonconj}. The most discussed open question concerning the motivic zeta function is the \emph{Monodromy Conjecture} which claims the existence of a relationship between the poles of the function and the eigenvalues of the local monodromy operator associated to the hypersurface singularity. In this paper we focus on a natural counterpart, introduced in \cite{LS}, of Denef's and Loeser's zeta function, whose argument consists of a punctured neighbourhood of the hypersurface singularity, together with a volume form on such neighbourhood, rather than involving the hypersurface itself. The two invariants are closely related and the questions about the Monodromy Conjecture make sense in both cases. 

Let $K$ be a complete discretely valued field. Let $R$ be its valuation ring and let $k$ be its residue field, which we assume to be algebraically closed.  Let $X\rightarrow \Spec K$ be a smooth algebraic variety. This datum is the arithmetic analog of the datum of a family of complex varieties over a small punctured disk, where one usually uses the fundamental group of the punctured disk to give a monodromy action on the cohomology. In our case the absolute Galois group of $K$, being canonically isomorphic to the étale fundamental group of $\Spec K$, replaces the usual fundamental group and acts on $X$, inducing an action on its cohomology: $H^\ast(X_{\overline{K}}, \Q_l)$, where $l$ is any prime number coprime with $\chara k$. We will assume that the wild inertia subgroup acts trivially on $X$, so that the action of $\Gal(\overline{K}|K)$ can be identified with the action of its tame quotient, which admits a single topological generator $\sigma$, hence we can study the monodromy action by focusing on the action of the linear operator $\sigma^\ast\colon H^\ast (X_{\overline{K}}, \Q_l)\rightarrow H^\ast(X_{\overline{K}}, \Q_l)$. This operator is quasi-unipotent, meaning that $\exists a,b\in \N$ such that ${\left( (\sigma^{\ast})^a-\id\right)}^b=0$, hence its eigenvalues, often called \emph{monodromy eigenvalues}, are roots of the unity. 

Now assume $X$ is a Calabi-Yau variety, \emph{i.e.} a smooth algebraic variety whose canonical sheaf $\omega_X\coloneqq \Omega_{X/K}^{\dim_X}$ is a trivial line bundle and let $\omega$ be a volume form on $X$. Using this datum, we denote by $Z_{X,\omega}(T)\in \mathcal{M}_k[[T]]$, the Motivic Zeta Function of Definition \ref{def:motzeta}. This invariant is supposed to be closely related to the monodromy operator described above in a sense explained in the following:
\begin{conjecture*}[Monodromy conjecture]
Let $q\in\Q$ be a pole of $Z_{X,\omega}(T)$, then $e^{2\pi i q}$ is a monodromy eigenvalue.
\end{conjecture*} 
The monodromy conjecture has been proven in several classes of varieties: Halle and Nicaise proved it for Abelian varieties, \cite{HN_Ab}, while Jaspers proved it in \cite{Jas} when $X$ is a K3 surfaces admitting a Crauder-Morrison model and Overkamp proved it for Kummer K3 surfaces in \cite{Ove}. Yet we do not know whether all the K3 surfaces satisfy the Monodromy conjecture. 

In this paper we study this conjecture for Hilbert schemes of points on surfaces; we report here a simplified version of our main result, Theorem \ref{thm:monconjHn}; a more precise statement is provided in \S \ref{sect:proofMC}, after having introduced the notions involved:
\begin{theorem*}
Let $X$ be a surface with trivial canonical bundle satisfying the monodromy conjecture. 

Then the conjecture holds also for $\Hilb^{n}(X)$, $\forall n\in \N$.
\end{theorem*} 
\begin{proof}[Sketch of the proof] We will outline here the main ideas involved in the proof.

The computation of the MZF relies on the tools of motivic integration, thus on the construction of a weak Néron model of $\Hilb^n(X)$. After fixing an \emph{sncd} model of $X$ over the ring $R$ (satisfying a technical assumption introduced in \S \ref{subsect:constwnm}), call $\mathfrak{X}$ such a model, then for all integers $m$ divisible enough and coprime with $\chara K$, it is possible to construct semistable models $\mathfrak{X}(m)\rightarrow \Delta(m) =\Spec R(m)$ of $X\times_K K(m)$, where $K(m)/K$ is the unique extension of degree $m$ such that the integral closure $R(m)= \overline{R} \subseteq K(m)$ is totally ramified over $R$. We give an explicit construction of such models, see \S \ref{sect:equivsemsred}, satisfying the following additonal properties:
\begin{itemize}
\item The natural action of $\mu_m=\Gal(K(m)|K)$ on $X(m)$ extends to $\mathfrak{X}(m)$;
\item For all points $x\in \mathfrak{X}(m)_{k,\sm}$, the stabilizer $\Stab_x\subseteq \mu_m$ acts trivially on the whole irreducible component $E\subseteq \mathfrak{X}(m)_{k}$ containing $x$.
\end{itemize}
The relative Hilbert functor provides a proper scheme $\Hilb^n(\mathfrak{X}(m)/ \Delta(m))\rightarrow \Delta(m)$ whose generic fibre is $\Hilb^n(X(m))$. This model inherits from $\mathfrak{X}(m)$ an action of $\mu_m$, in particular it is a proper, equivariant model of $X(m)$ over $\Delta(m)$. If we remove the critical points of the map $\mathfrak{X}(m)\rightarrow \Delta(m)$, then we repeat the construction obtaining a smooth $\mu_m-$equivariant subscheme $\Hilb^n(\mathfrak{X}(m)_{\sm}/ \Delta(m))\subseteq \Hilb^n(\mathfrak{X}(m)/ \Delta(m))$. By the means of Weil restriction of scalars (see \S \ref{sect:wnm}), we construct a smooth model of $\Hilb^n(X)$ over $R$, namely $ {\left( \Res_{\Delta(\tilde{n})/\Delta} \Hilb^n(\mathfrak{X}(m)_{\sm} / \Delta(\tilde{n}))\right)}^{\mu_{\tilde{n}}}$. 

Under a suitable choice of $m$ ($m=\tilde{n}$ under the notation of \S \ref{sect:motintHilb}), the model above satisfies the weak extension property with respect to $\Spec K\subseteq \Spec R$, hence it is a weak Néron model of $\Hilb^n(X)$. This model provides a direct way to compute the motivic integral $\int_{\Hilb^n(X)}\omega^{[n]}$; choosing $m=m_1\tilde{n}$ allows us to compute the motivic integral of the Hilbert schemes \\ $\Hilb^n(X(m_1)/K(m_1))$, for $m_1\in \N_+$, therefore we get the formal series defining $Z_{\Hilb^n(X), \omega^{[n]}}$, since $\Hilb^n(X)\times_K K(m)= \Hilb^n(X(m))$. The value of the Zeta function is, indeed, written implicitly in Theorem \ref{thm:implicitZ} and explicitly in Equation \eqref{eqn:explZ}, i.e. 
$$
Z_{\Hilb^n(X), \omega^{[n]}} = \sum_{\alpha \dashv n} \prod_{j=1}^\infty  \left(  \L^{(j-1)\alpha_j} \Sym^{\alpha_j}\left( Z_{X(j), \omega(j)} \right) \right)  \punto
$$
Using this explicit formula and the results developed in \S \ref{sect:studysym}, we find out that every pole of $Z_{\Hilb^n(X), \omega^{[n]}}$ is the sum of $n$ poles of $Z_{X, \omega}$, possibly repeated, we do not know yet if the converse is true, i.e. whether every possible sum of $n$ poles of $Z_{X, \omega}$ is a pole of $Z_{\Hilb^n(X), \omega^{[n]}}$. Putting the description of the poles of the Zeta Function together with the description of the cohomology of $\Hilb^n(X)$ in \cite{GS}, one finds out the claimed result about the monodromy conjecture.
\end{proof} 
A further byproduct of the computations we carried out in \S \ref{sect:forser}, we also achieved Proposition \ref{prop:monconjprod}:
\begin{proposition*}
Let $Y, Z$ be two Calabi-Yau varieties endowed with volume forms $\omega_1, \omega_2$ satisfying the monodromy conjecture. Let $\omega$ be volume forms on $Y \times Z$ the volume form $\omega\coloneqq \pr_Y^\ast \omega_1 \wedge \pr_Z^\ast \omega_2$. Then also $Y\times Z$, endowed with the volume form $\omega$, satisfies the monodromy conjecture.
\end{proposition*}
We conclude the paper by discussing the Monodromy conjecture in an explicit example, i.e. comparing the poles of the zeta functions of a quartic surface embedded in $\P^3_K$ and of its Hilbert schemes of degree 2.

\subsection*{Organization of the paper}

In \S \ref{sect:GRV} we introduce the Grothendieck rings of varieties and their equivariant versions, using them to define the main actors of the manuscript. 

In \S \ref{sect:wnm} we recall the notion of a weak Néron model and show some techniques that we will use in order to construct them. 

In \S \ref{sect:equivsemsred} we adapt the theory of potential semistable reduction of families of surfaces in our case. We construct a specific equivariant semistable model satisfying a good property with respect to the Galois action of the extension. 

In \S \ref{sect:intrmonconj} we recall the notion of motivic integration and the definition of the Motivic Zeta Function. Then we explain the notion of rationality for power series with coefficients in $\mathcal{M}_k$, in $\mathcal{M}_k\left[ (\L^r-1)^{-1} \colon 0<r\in \N \right]$ and in $\widehat{\mathcal{M}_k}$, giving also a definition of a pole of such functions. We conclude by stating the Monodromy Conjecture in two forms. 

In \S \ref{sect:forser} we discuss some facts concerning the poles of rational functions with coefficients in our motivic ring. These properties will be useful when applied to the formula that we will produce for the motivic zeta function of Hilbert schemes. 

In \S \ref{sect:motintHilb} we give a construction of the weak Néron models of Hilbert schemes of points on surfaces and use those models to compute their motivic zeta function.

We finally discuss the monodromy property in \S \ref{sect:proofMC}.



\subsection*{Acknowledgements}

The author is very grateful to his PhD advisor, Prof. Lars Halle, for introducing him to the problem and for many interesting ideas and conversations. The author is also grateful to Prof. Johannes Nicaise for many helpful conversations.

\vspace{\baselineskip}
\noindent
\framebox[\textwidth]{
\begin{tabular*}{0.96\textwidth}{@{\extracolsep{\fill} }cp{0.84\textwidth}}
\raisebox{-0.7\height}{%
    \begin{tikzpicture}[y=0.80pt, x=0.8pt, yscale=-1, inner sep=0pt, outer sep=0pt, 
    scale=0.12]
    \definecolor{c003399}{RGB}{0,51,153}
    \definecolor{cffcc00}{RGB}{255,204,0}
    \begin{scope}[shift={(0,-872.36218)}]
      \path[shift={(0,872.36218)},fill=c003399,nonzero rule] (0.0000,0.0000) rectangle (270.0000,180.0000);
      \foreach \myshift in 
           {(0,812.36218), (0,932.36218), 
    (60.0,872.36218), (-60.0,872.36218), 
    (30.0,820.36218), (-30.0,820.36218),
    (30.0,924.36218), (-30.0,924.36218),
    (-52.0,842.36218), (52.0,842.36218), 
    (52.0,902.36218), (-52.0,902.36218)}
        \path[shift=\myshift,fill=cffcc00,nonzero rule] (135.0000,80.0000) -- (137.2453,86.9096) -- (144.5106,86.9098) -- (138.6330,91.1804) -- (140.8778,98.0902) -- (135.0000,93.8200) -- (129.1222,98.0902) -- (131.3670,91.1804) -- (125.4894,86.9098) -- (132.7547,86.9096) -- cycle;
    \end{scope}
    \end{tikzpicture}%
}
&
This project has received funding from the European Union Horizon 2020 research and
innovation programme under the Marie Sk{\l}odowska-Curie grant agreement No 801199.
\end{tabular*}
}

\section{Grothendick ring of varieties}
\label{sect:GRV}

In this section we introduce the rings containing the coefficients of the formal series we shall study later on. 

\subsection{A motivic ring}

\subsubsection{} Fix a field $k$ and consider the category of algebraic varieties $\Var_k$. Let $K_0(\Var_k)$ be the group whose generators are isomorphism classes in $\Var_k$ and whose relations, called \emph{scissor relations}, are generated by elements in the form 
$$ X - Y - (X\backslash Y) \virgola $$
whenever $X$ is an algebraic variety and $Y\subseteq X$ is a closed subscheme. We denote by $[X]$ the class of $X\in \Var_k$ in $K_0(\Var_k)$.

\subsubsection{} There is a unique ring structure on $K_0(\Var_k)$ such that for all $X,Y\in \Var_k$ one has $[X]\cdot [Y] = [X\times_k Y]$. With this ring structure, $K_0(\Var_k)$ is called \emph{the Grothendieck ring of varieties}. It is also characterized by the following universal property:

\paragraph{Universal property of $K_0(\Var_k)$.} Let $R$ be a ring and let $\Psi\colon \Var_k\rightarrow R$ a multiplicative and additive invariant, i.e. a function which associate to a variety $X$ an element $\Psi (X)\in R$ such that $\Psi(X\times_k Y)=\Psi(X)\Psi(Y)$ and if $X= Y\cup Z$, then $\Psi(X)+\Psi(Y\cap Z)=\Psi(Y)+\Psi(Z)$.
Then there is a unique ring homomorphism $\varphi\colon K_0(\Var_k)\rightarrow R$ such that $$\forall X\in \Var_k, \mbox{ one has that }  \; \varphi([X])=\Psi(X) \punto $$
Due to this universal property, $K_0(\Var_k)$ naturally embeds in the ring of Chow motives.


\subsection{Localised Grothendieck ring}

\subsubsection{} A ring that is worth some consideration is obtained as a localization of $K_0(\Var_k)$.

\begin{definition}[Localised Grothendieck ring of varieties]
Let us denote by $\L$ the element $[\A^1_k]\in K_0(\Var_k)$. 

The localisation of $K_0(\Var_k)$ with respect to $\L$,
$$ \mathcal{M}_k \coloneqq K_0(\Var_k)[\L^{-1}]  \virgola $$
is called \emph{the localised Grothendieck ring of varieties}.
\end{definition}

\subsubsection{} By combining the universal property of localization and of $K_0(\Var_k)$, one can define $\mathcal{M}_k$ as a universal ring for all the invariants $\Var_k\rightarrow R$ which send $\A_k^1$ in $R^\ast$.



\subsection{Completed Grothendieck ring}

\subsubsection{} Consider the filtration $\pazocal{F}^n\mathcal{M}_k\coloneqq \left\langle \L^r [X] | r\in \Z\virgola \dim[X]+r\leq - n \right\rangle_\Z$. 

\begin{definition}[Completed Grothendieck ring of varieties]
The completed Grothendieck ring of varieties $\widehat{\mathcal{M}_k}$ is the completion of $\mathcal{M}_k$ with respect to the filtration $\pazocal{F}^\bullet$.
\end{definition}


\subsection{Equivariant setting}
\label{subsect:equivring}

\subsubsection{} All the three rings above have an equivariant version, i.e. can be constructed in the category $\Var_k^G$ of algebraic varieties endowed with the action of an algebraic group $G$.

\subsubsection{} As before, the \emph{equivariant Grothendieck group of varieties} $K_0(\Var_k^G)$ is the group generated on the isomorphism classes of $G-$varieties with relation of two kinds:
\begin{description}
\item[Scissor relations] Let $X$ be a $G-$variety and $Y$ a $G-$invariant closed subscheme, then
$$ X - Y - (X\backslash Y) \virgola $$
is $0$ in $K_0(\Var^G_k)$.
\item[Trivializing relations]
Let $S\in \Var_k^G$ and let $V\rightarrow S$ be a $G-$equivariant affine bundle of rank $d$. Then 
$$ V- (S\times \A_k^d) \in \Var_k^G $$ 
is set to $0$, where $G$ acts trivially on the second factor of $S\times \A_k^d$.
\end{description}
There is a unique ring structure on $K_0(\Var_k^G)$ such that for every two $X, Y\in \Var_k^G$, we have $[X]\cdot [Y]\coloneqq [X\times_k Y]$, where the action of $G$ on $X\times_k Y$ is the diagonal action.

\subsubsection{} We use again the symbol $\L\coloneqq [\A_k^1]$, where the group $G$ acts trivially on the affine line; thus the trivializing relations tell us nothing more than:
$$ [V]=\L^d[S]  \virgola $$
whenever $V\rightarrow S$ is an equivariant affine bundle of rank $d$.

\subsubsection{} Similarly to what we did in the previous sections, we define the localisation \\ $\mathcal{M}_k^G\coloneqq K_0(\Var_k^G)[\L^{-1}]$ and its completion $\widehat{\mathcal{M}_k^G}$ with respect to the filtration $$\pazocal{F}^n\mathcal{M}_k\coloneqq \left\langle \L^r [X] | r\in \Z\virgola \dim[X]+r\leq - n \right\rangle_\Z \punto$$

\section{Weak Néron models}
\label{sect:wnm}

\subsection{Definition and basic constructions}

\subsubsection{} In order to define the main objects involved in this manuscript
we shall need to introduce the notion of a weak Néron model and to develop some techniques  involved in the construction of such models. 

\subsubsection{} The results we are going to state in this section hold in the context of algebraic spaces, but we shall not work in such generality, thus we state them only in the context of schemes.

\begin{definition}
Let $R$ be a DVR and $K$ its fraction field and denote by $\Delta\coloneqq \Spec R$. Let $X\rightarrow \Spec K$ be a smooth morphism of schemes. We say that the scheme $\mathfrak{X}\rightarrow \Delta$ is a model for $X$ over $\Delta$ if the morphism $\mathfrak{X}\rightarrow \Delta$ is flat and there is an isomorphism $X\cong \mathfrak{X}_K$ in $\Sch_K$. 

Moreover we say that the model $\mathfrak{X}$ has a property $\mathbf{P}$ (e.g. smooth, proper) if the morphism $\mathfrak{X}\rightarrow \Delta$ has such property.
\end{definition}

\subsubsection{} The notion we are mostly interested in is that of \emph{Weak Néron Model}:

\begin{definition}
We say that a model $\mathfrak{X}\rightarrow \Delta$ of $X$ has the weak extension property if for any finite étale morphism $Z\rightarrow \Delta$, there is a bijection $\Hom_{\Delta}(Z, \mathfrak{X}) \tilde{\rightarrow} \Hom_{K}(Z_K, X)$, $(f\colon Z\rightarrow \mathfrak{X})\mapsto f|_{Z_K}$. 

A smooth model $\mathfrak{X}\rightarrow \Delta$ of $X\rightarrow \Spec K$ that satisfies the weak extension property is said to be a \emph{weak Néron model} of $X$.
\end{definition}

\begin{example}
\label{ex:semistab}
Let $X\rightarrow \Spec K$ be a smooth and proper variety and let $\pazocal{X}\rightarrow \Spec R$ be a proper regular model of $X$, then the smooth locus of $\pazocal{X}\rightarrow \Spec R$ is a weak Néron model of $X$, because the sections $\Spec R\rightarrow \pazocal{X}$ do not meet the singular locus of $\pazocal{X}_0\coloneqq X\times_{\Spec R} \Spec k$. 

Since we will mostly work with \emph{sncd} models, i.e. regular models whose central fibre $\pazocal{X}_0$ sits in $\pazocal{X}$ as a divisor with strict normal crossings, this construction will be often used throughout this manuscript.
\end{example}

\subsection{The weak Néron model of a Hilbert scheme}

\subsubsection{} Let $V\rightarrow S$ be a flat morphism of schemes and let $n\in \N_+$ a positive integer. Consider the scheme $\Sym^n(V/S)\coloneqq V^{n}/{\Sigma_n}$, where the symmetric group acts on the product by permuting the factors. There is a natural morphism 
$$ \Hilb^n(V/S) \rightarrow \Sym^n(V/S) $$
called Hilbert-Chow morphism, sending a subscheme of $V$ to a weighted sum of finite $S-$subschemes of $V$.


\subsubsection{} If $V\rightarrow S$ is a smooth map of relative dimension $2$, then $\Hilb^n(V/S)$ is a smooth scheme of relative dimension $2n$, in particular the Hilbert-Chow morphism is a resolution of singularities of $\Sym^n(V/S)$.

\subsubsection{} Our main motivation for studying the Hilbert schemes of points on a K3 surface comes from the following:

\begin{theorem}
If $X\rightarrow \Spec \C$ is a K3 surface, then $\Hilb^n(X/\C)$ is an irreducible holomorphic symplectic variety of dimension $2n$.
\end{theorem}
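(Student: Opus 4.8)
The plan is to reproduce Beauville's original argument, phrased in the notation above. The two geometric inputs are exactly the facts recalled in the excerpt: $\Hilb^n(X/\C)$ is smooth of dimension $2n$, and the Hilbert--Chow morphism $\pi\colon \Hilb^n(X/\C)\to\Sym^n(X/\C)$ is a resolution of singularities. In addition I would use the defining properties of a K3 surface over $\C$: the canonical sheaf $\omega_X\cong\Omega^2_{X/\C}$ is trivial, generated by a nowhere-vanishing holomorphic $2$-form $\omega$; $H^0(X,\Omega^1_{X/\C})=0$; and $X$ is simply connected with $H_1(X,\Z)=0$. Finally, $X$ being projective, $\Hilb^n(X/\C)$ is projective, hence compact K\"ahler.

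First I would construct the symplectic form. On the dense open subset $U\subseteq\Hilb^n(X/\C)$ parametrising reduced length-$n$ subschemes one has $U\cong (X^n\setminus\Delta)/\Sigma_n$, with $\Delta$ the big diagonal, and the $\Sigma_n$-invariant symplectic form $\sum_{i=1}^n\pr_i^\ast\omega$ on $X^n$ descends to a symplectic form $\omega_U$ on $U$. I claim $\omega_U$ extends to a global holomorphic $2$-form $\omega^{[n]}$ on $\Hilb^n(X/\C)$. Near the generic point of the exceptional divisor $E=\Hilb^n(X/\C)\setminus U$ the morphism $\pi$ is, \'etale-locally, the product of the minimal resolution of an $A_1$ surface singularity $\C^2/\{\pm1\}$ with a polydisc of dimension $2n-2$; since the minimal resolution of an $A_1$ singularity is crepant and carries a symplectic form restricting to the standard one on the smooth locus, $\omega_U$ extends across the generic point of $E$. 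The locus where this local model degenerates (the deeper Hilbert--Chow strata) has codimension $\geq 2$ in the smooth variety $\Hilb^n(X/\C)$, so the Riemann extension theorem for holomorphic differential forms yields $\omega^{[n]}\in H^0(\Hilb^n(X/\C),\Omega^2)$.

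Next I would check nondegeneracy. The canonical sheaf $\omega_{\Sym^n(X/\C)}$ is reflexive and trivial, generated on the smooth locus by the descent of the $\Sigma_n$-invariant generator $\bigl(\sum_i\pr_i^\ast\omega\bigr)^{\wedge n}$ of $\omega_{X^n}$ (here one uses that $\Sigma_n$ acts on $X^n$ without pseudo-reflections). Matching $\pi$ with the crepant $A_1$-resolution along $E$ shows the unique exceptional divisor $E$ has discrepancy $0$, so $\pi$ is crepant and $\omega_{\Hilb^n(X/\C)}\cong\pi^\ast\omega_{\Sym^n(X/\C)}$ is trivial. Hence $(\omega^{[n]})^{\wedge n}$ is a global section of a trivial line bundle that is nowhere zero on $U$, therefore nowhere zero, so $\omega^{[n]}$ is a holomorphic symplectic form. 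For the remaining two conditions: $\pi_1(\Sym^n(X/\C))\cong H_1(X,\Z)=0$, and since the fibres of $\pi$ are products of punctual Hilbert schemes, which are irreducible and rational hence simply connected, a standard fibration argument over the simply connected base gives $\pi_1(\Hilb^n(X/\C))=0$. Restriction to $U$ together with Hartogs identifies $H^0(\Hilb^n(X/\C),\Omega^2)$ with $H^0(X^n,\Omega^2)^{\Sigma_n}$; by K\"unneth and $H^0(X,\Omega^1)=0$ this equals $\bigl(\bigoplus_i\pr_i^\ast H^0(X,\Omega^2)\bigr)^{\Sigma_n}$, which is one-dimensional, spanned by the class of $\omega^{[n]}$. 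Combined with connectedness and the symplectic form $\omega^{[n]}$, this is precisely the assertion that $\Hilb^n(X/\C)$ is an irreducible holomorphic symplectic variety of dimension $2n$.

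The main obstacle is the extension statement in the second paragraph: one must establish the local normal form of the Hilbert--Chow resolution along the exceptional divisor $E$ and control the codimension of the locus where it fails, so that $\omega_U$ genuinely extends and the resolution is seen to be crepant. Once this local analysis is in place, the rest is formal given the smoothness, dimension and resolution statements quoted in the excerpt.
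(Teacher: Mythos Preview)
The paper does not prove this theorem: it is stated without proof, as a classical result (due to Beauville) serving purely as motivation for studying Hilbert schemes of points on surfaces with trivial canonical bundle. There is therefore no ``paper's own proof'' to compare against.

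Your argument is essentially Beauville's original one and is correct in outline. A couple of minor remarks: your fundamental-group step (``a standard fibration argument over the simply connected base'') is a bit glib, since $\pi$ is not a fibration; the usual route is either to invoke Koll\'ar's theorem that a projective resolution of a simply connected normal variety is simply connected, or to use the surjection $\pi_1(X^n)\to\pi_1(\Sym^n X)$ together with the crepancy of $\pi$ and a direct analysis. Also, in the $h^{2,0}$ computation the Hartogs step needs $\Sym^n(X)$ normal and the complement of the smooth locus of codimension $\geq 2$, which you are implicitly using but might state. These are cosmetic; the substance of your proof is sound and matches the standard reference.
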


\subsubsection{} Also the case of an abelian surface $A\rightarrow \Spec K$ is interesting, because in that case the Hilbert scheme is anyway a smooth Calabi-Yau variety, i.e. a variety with trivial canonical bundle; moreover all IHS varieties of Kummer type over $\C$ are obtained as a deformation of a subscheme of $\Hilb^n(A/\C)$.

\subsubsection{} We shall actually construct a weak Néron model of $\Hilb^n(X)$ in \S \ref{subsect:constwnm} and in particular in Proposition \ref{prop:wNmHilbn}, but we anticipate here the strategy we are going to follow.

We start with a regular sncd model $\mathfrak{X}\rightarrow \Delta$. Its smooth locus $\mathfrak{X}_{\sm}$ is a weak Néron model of $X$ and its Hilbert schemes $\Hilb^n(\mathfrak{X}_{\sm}/\Delta)$ are smooth models of $\Hilb^{n}(X)$ but they do not satisfy the weak extension property for $n>1$. This is due to the fact that a closed subscheme $Z\subseteq X$ of length $n$ is not necessarily supported on $K-$rational points. 

On the other hand, at least for $n< \chara K$ (if it is positive), there is an integer $\tilde{n}$ such that the support of all the closed subschemes of $X$ of length $\leq n$ is defined on a suitable field extension $K(\tilde{n})/K$. It turns out that base-changing with respect to the field extension $K(\tilde{n})/K$ induces a correspondence between closed subschemes $Z\subseteq X$ of length $n$ and $\Gal(K(\tilde{n})|K)-$invariant closed subschemes of $X(\tilde{n})$ of length $n$. Via equivariant semistable reduction, which we explain in \S \ref{sect:equivsemsred}, it will be possible to construct smooth models of $X_{K(\tilde{n})}$ endowed with an action of $\Gal(K(\tilde{n})|K)$ extending its natural action on $X_{K(\tilde{n})}$. 

Thus we will be able to construct a smooth model of $\Hilb^n(X_{K(\tilde{n})})$ endowed with an action of $\Gal(K(\tilde{n})|K)$ which has a property slightly weaker than the weak extension property. Nevertheless, using what we are going to explain in this section, namely the Weil restriction of scalars, we will be finally able to construct a weak Néron model for $\Hilb^n(X)$ and use it to compute the motivic integral.

\subsection{Weil restriction of scalars}

\subsubsection{} In this paragraph we study some generalities about the functor of the restriction of scalars. The main content of this paragraph is Proposition \ref{prop:WreswNm}, which shall allow us to construct weak Néron models of a finite, tamely ramified base-change of $X$. 


%

\begin{definition}
Let $S'\rightarrow S$ be a morphism of schemes and $\pazocal{Y}\rightarrow S'$ be a scheme. The functor $\Res_{S'/S}(\pazocal{Y})\colon (\Sch_S)^{\opp}\rightarrow \Sets$ defined by $T\mapsto \pazocal{Y}(T\times_S S')$ is called the Weil restriction of scalars of $\pazocal{Y}$ along $S'\rightarrow S$. 

When $\Res_{S'/S}(\pazocal{Y})$ is represented by a scheme, we say that the Weil restriction of $\pazocal{Y}$ along $S'\rightarrow S$ exists.
\end{definition}

\subsubsection{} It follows from \cite[Theorem 7.6]{BLR} that if $S'\rightarrow S$ is a finite, flat and locally of finite presentation and $\pazocal{Y}\rightarrow S'$ is quasi-projective, then the Weil restriction of $\pazocal{Y}$ along $S'\rightarrow S$ exists. This will always be the case, throughout this manuscript.


\begin{remark}
\label{rem:shres}
Consider arbitrary morphisms of schemes $S'\rightarrow S$ and $\pazocal{X}\rightarrow S$; then the universal property of fibered products implies the following:
$$ \Res_{S'/S}(\pazocal{X}\times_S S')= \underline{\Hom}_S(S',\pazocal{X}) \virgola $$
where $\underline{\Hom}_S(S',\pazocal{X})$ is the sheaf $T\mapsto \Hom_T(S'\times_S T, \pazocal{X}\times_S T)$.
\end{remark}

%
%
%

\subsubsection{} Let $R$ be a complete DVR and let $K$ be its fraction field and $k$ be its residue field. We suppose $k$ is algebraically closed. Let $K\subseteq L$ be a finite, tame, Galois extension with $\pazocal{G}\coloneqq \Gal(L|K)$ and denote by $R_L$ the integral closure of $R$ in $L$. For an arbitrary scheme $T\rightarrow \Spec R$, the action of $\pazocal{G}$ on $\Spec R_L$ induces an action on $T_{R_L}$, thus, by precomposition, an action on the Weil restriction, $ \Res_{R_L/R}(\pazocal{X})$, of an $R_L-$scheme $\pazocal{X}$: more precisely, all $g\in \pazocal{G}$, it induce a automorphisms $g\colon T_{R_L}\rightarrow T_{R_L}$, the action of $\pazocal{G}$ sends the point corresponding to the morphism $\psi \colon T_{R_L}\rightarrow \pazocal{X}$ to the composition $g(\psi)\coloneqq \psi \circ g^{-1}$.

\begin{proposition}
\label{prop:WreswNm}
Let $\mathfrak{X}'\rightarrow R_L$ be a weak Néron model for $X_L$, then $\mathfrak{X}\coloneqq \left(\Res_{R_L/R} \mathfrak{X}'\right)^{\pazocal{G}}$ is a weak Néron model for $X$.
\end{proposition}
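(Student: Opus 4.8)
The plan is to verify the two defining properties of a weak Néron model separately: smoothness of $\mathfrak{X} = \left(\Res_{R_L/R}\mathfrak{X}'\right)^{\pazocal{G}}$ over $R$, and the weak extension property with respect to $\Spec K \subseteq \Spec R$. For smoothness, I would first recall that the Weil restriction $\Res_{R_L/R}\mathfrak{X}'$ of a smooth $R_L$-scheme along the finite flat morphism $\Spec R_L \to \Spec R$ is smooth over $R$; this is standard (see \cite[\S 7.6]{BLR}), since smoothness is characterized by the infinitesimal lifting criterion and the Weil restriction functor is right adjoint to base change $-\times_R R_L$, so it sends formally smooth morphisms to formally smooth ones, and it preserves finite presentation. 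The more delicate point is that taking $\pazocal{G}$-fixed points preserves smoothness over $R$: here I would invoke that $\pazocal{G}$ is a finite group whose order is invertible on $R$ (the extension $L/K$ is tame, so $\#\pazocal{G}$ is coprime to $\chara k$, and since $R$ is a $\mathbb{Z}_{(p)}$-algebra with $p = \chara k$ the order is a unit in $R$). Under this hypothesis the fixed-point scheme of a smooth affine $R$-scheme under a finite group of order invertible on the base is again smooth over $R$ — this follows by a standard averaging/linearly reductive argument on the level of tangent spaces and the infinitesimal criterion. One must be slightly careful that $\Res_{R_L/R}\mathfrak{X}'$ need not be affine, but smoothness and the fixed-locus formation are local on the base, and one reduces to the affine case.

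For the weak extension property, let $Z \to \Spec R$ be finite étale; I must produce a natural bijection $\Hom_R(Z,\mathfrak{X}) \xrightarrow{\sim} \Hom_K(Z_K, X)$. The key computation is to unwind the definitions. By the universal property of Weil restriction and of fixed points, a morphism $Z \to \mathfrak{X} = \left(\Res_{R_L/R}\mathfrak{X}'\right)^{\pazocal{G}}$ is the same datum as a $\pazocal{G}$-equivariant morphism $Z_{R_L} := Z \times_R R_L \to \mathfrak{X}'$, where $\pazocal{G}$ acts on $Z_{R_L}$ through the second factor and on $\mathfrak{X}'$ via its given action (coming from the action on $X_L$ and the fact that $\mathfrak{X}'$ is a model — here I should note that $\mathfrak{X}'$ being a weak Néron model, the $\pazocal{G}$-action on $X_L$ need not a priori extend to $\mathfrak{X}'$; however, $\mathfrak{X}'$ is canonically the smooth locus of, or receives a canonical map from, any proper model, and in any case each $g \in \pazocal{G}$ induces by functoriality of the weak Néron model an isomorphism $g\colon \mathfrak{X}' \xrightarrow{\sim} \mathfrak{X}'$ lifting the action on $X_L$, because $\mathfrak{X}'$ applied to the finite étale cover $\Spec R_L \to \Spec R$... ). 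Now $Z_{R_L} \to \Spec R_L$ is again finite étale, so by the weak extension property of $\mathfrak{X}'$ over $R_L$ we get $\Hom_{R_L}(Z_{R_L},\mathfrak{X}') \xrightarrow{\sim} \Hom_L((Z_{R_L})_L, X_L) = \Hom_L(Z_K \times_K L, X_L)$, and this bijection is $\pazocal{G}$-equivariant by naturality. Taking $\pazocal{G}$-invariants on both sides and using Galois descent along the finite Galois extension $L/K$ — which identifies $\pazocal{G}$-equivariant $L$-morphisms $Z_K\times_K L \to X\times_K L$ with $K$-morphisms $Z_K \to X$ — yields the desired bijection $\Hom_R(Z,\mathfrak{X}) \xrightarrow{\sim} \Hom_K(Z_K, X)$.

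I expect the main obstacle to be the bookkeeping around the $\pazocal{G}$-action on $\mathfrak{X}'$ and the precise form of Galois descent in the last step. On the first point: one must argue that the $\pazocal{G}$-action on $X_L$ lifts to an action on the weak Néron model $\mathfrak{X}'$, or at least that the natural $\pazocal{G}$-action on $\Res_{R_L/R}\mathfrak{X}'$ described in the paragraph preceding the proposition (precomposition with $g\colon T_{R_L} \to T_{R_L}$) is the one whose fixed points we want, and that this is compatible with the weak extension bijections — this is a naturality statement that is true but needs care since weak Néron models are not unique. On the second point: the descent step requires that $\Hom$ into $X$ (not necessarily affine or separated in a way that makes descent automatic) satisfies fppf, hence Galois, descent along $\Spec L \to \Spec K$ applied to the finite $K$-scheme $Z_K$; since $Z_K$ is finite over $K$ hence affine, and morphisms to any scheme from an affine scheme glue, descent holds, but I would state this cleanly. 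Everything else (smoothness of Weil restriction, invertibility of $\#\pazocal{G}$, finite étale base change preserving finite étale) is routine and I would not belabor it.
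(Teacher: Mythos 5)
Your outline is sound---the two halves, smoothness and the weak extension property, are the right things to check---but your route differs from the paper's at one instructive place, and one step that you flag as delicate is a genuine gap as written.

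On the weak extension property, you verify the bijection $\Hom_R(Z,\mathfrak{X})\cong\Hom_K(Z_K,X)$ directly for an arbitrary finite \'etale $Z\to\Spec R$, applying the weak extension property of $\mathfrak{X}'$ to $Z_{R_L}$ and then descending along $L/K$. The paper reduces at once to $Z=\Spec R$: since $R$ is a complete (hence Henselian) DVR and $k$ is algebraically closed, every finite \'etale $R$-scheme is a finite disjoint union of copies of $\Spec R$, so it suffices to extend $K$-points of $X$ to $R$-points of $\mathfrak{X}$. That reduction makes the Galois-descent bookkeeping you worry about in your last paragraph unnecessary. Your version is a bit more general (it would survive weakening the hypothesis on $k$), the paper's is shorter; both are correct. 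For smoothness you reprove by an averaging argument what the paper simply cites as \cite[Proposition 3.4]{Edi}, and both you and the paper invoke \cite[Proposition A.5.2]{CGP} for smoothness of the Weil restriction. The paper also separately verifies that the generic fibre of $(\Res_{R_L/R}\mathfrak{X}')^{\pazocal{G}}$ is $X$; you implicitly fold this into the case $Z=\Spec R$ of the extension bijection, but it is cleaner stated on its own.

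The gap is your parenthetical on the $\pazocal{G}$-action on $\mathfrak{X}'$. You correctly observe that the action of $\pazocal{G}$ on $X_L$ need not extend to an arbitrary weak N\'eron model $\mathfrak{X}'$, but you then appeal to ``functoriality of the weak N\'eron model,'' and that is not available: weak N\'eron models are not unique and satisfy no N\'eron-style mapping property, so an automorphism of $X_L$ over an automorphism of $\Spec L$ has no reason to lift to $\mathfrak{X}'$. Without such a lift the ``precompose with $g^{-1}$'' recipe described just before the proposition does not even produce a point of $\Res_{R_L/R}\mathfrak{X}'$: the map $g^{-1}\colon T_{R_L}\to T_{R_L}$ is only an $R$-morphism, so $\psi\circ g^{-1}$ becomes an $R_L$-morphism to $\mathfrak{X}'$ only after postcomposing with a lift $g_{\mathfrak{X}'}\colon\mathfrak{X}'\to\mathfrak{X}'$ of $g$, and hence $(\Res_{R_L/R}\mathfrak{X}')^{\pazocal{G}}$ does not make sense without one. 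The proposition must therefore be read with the tacit hypothesis that $\mathfrak{X}'$ is $\pazocal{G}$-equivariant, i.e.\ that the $\pazocal{G}$-action on $X_L$ does extend to $\mathfrak{X}'$. This is harmless for the paper because the models used in \S\ref{sect:motintHilb} are produced by the equivariant semistable reduction of \S\ref{sect:equivsemsred} precisely so that the Galois action extends, but your functoriality claim should be replaced by this hypothesis.
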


\begin{proof}
It follows from \cite[Proposition A.5.2]{CGP} that the operations of taking the generic fibre and taking the restriction of scalars commute, therefore 
$$(\Res_{R_L/R}\mathfrak{X}')_K=\Res_{L/K}X_L=\underline{\Hom}_K(\Spec L, X) \virgola $$
where the last equality follows from Remark \ref{rem:shres}.

Let $T\rightarrow \Spec K$ be a scheme, then a morphism $T_L\rightarrow X\times_{\Spec K} T$ is $\pazocal{G}-$invariant if and only if it factors through $T_L\rightarrow T_L/\pazocal{G}=T$, this gives a bijection between $\Hom_T(T_L, X\times_{\Spec K} T)$ and the set of sections of $X\times_{\Spec K} T\rightarrow T$, which in turn is $\Hom_K(T, X)=X(T)$.
Therefore we have that
$$ \left( (\Res_{R_L/R}\mathfrak{X'})^\pazocal{G} \right)_K\cong X \punto $$ 
%
%
%
Since $\mathfrak{X}'\rightarrow \Spec R_L$ is a weak Néron model for $X_L$, it is in particular a smooth morphism, thus, by \cite[Proposition A.5.2]{CGP} also $\Res_{R_L/R}(\mathfrak{X}')\rightarrow \Spec R$ is smooth. It follows by \cite[Proposition 3.4]{Edi} that the $\pazocal{G}-$fixed locus $\mathfrak{X}$ is smooth as well. 

In order to conclude the proof, we only need to show that all the $K-$valued points of $X$ extend to $R-$valued points of $\mathfrak{X}$, since we assumed $R$ to be complete. 

A morphism $\Spec K \rightarrow X$ induces a unique morphism, which is also $\pazocal{G}-$invariant, $\Spec L\rightarrow X_L$. Since $\mathfrak{X}'$ is a weak Néron model for $X_L$, such map extends to a unique map $\Spec R_{L}\rightarrow \mathfrak{X}'$, which is also $\pazocal{G}-$invariant and correspond, by the definition of the restriction of scalars, to a map $\Spec R_L\rightarrow \Res_{R_L/R} \mathfrak{X}'$ and, by $\pazocal{G}-$invariance it factors through a map $\Spec R_L\rightarrow\mathfrak{X}\subseteq \Res_{R_L/R} \mathfrak{X}'$.
\end{proof}

%
%

%

\subsubsection{} The following lemma says that this construction is well behaved with respect to a tower of extensions:

\begin{lemma}
\label{lem:compWres}
Let $K\subseteq F \subseteq L$ be a tower of Galois extensions such that also $K\subseteq L$ is normal. Let $G\coloneqq \Gal(L|K)$, $N\coloneqq \Gal(L/F)$ and $G/N=H \coloneqq \Gal(F|K)$. Denote by $R_F$ and $R_L$ the integral closures of $R$ in $F$ and $L$, respectively, and $\Delta_F$, $\Delta_L$ their spectra. Let $\pazocal{F}\colon \Sch_{\Delta_L}^{\opp}\rightarrow \Sets$ be a functor endowed with an action of $G$ compatible with its action on $\Delta_L$. 

Then the following two functors are naturally isomorphic:
$$ {\left(\Res_{\Delta_L/\Delta}\pazocal{F}\right)}^G \cong {\left(\Res_{\Delta_F/\Delta}{\left(\Res_{\Delta_L/\Delta_F}\pazocal{F}\right)}^N \right)}^H \punto $$
\end{lemma}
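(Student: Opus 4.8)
The plan is to prove the isomorphism of functors by evaluating both sides on an arbitrary test scheme $T\in\Sch_\Delta$ and producing a natural bijection on $T$-points, using only the definition of Weil restriction and the formalism of taking fixed points of a group action on a functor. First I would unwind the left-hand side: by definition, $\left(\Res_{\Delta_L/\Delta}\pazocal{F}\right)(T)=\pazocal{F}(T\times_\Delta\Delta_L)=\pazocal{F}(T_{\Delta_L})$, and the superscript $G$ selects the subset of elements fixed by the $G$-action that is induced (by precomposition with the $G$-action on $\Delta_L$, exactly as described in the paragraph preceding Proposition \ref{prop:WreswNm}) on $\pazocal{F}(T_{\Delta_L})$. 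So the left side evaluated on $T$ is $\pazocal{F}(T_{\Delta_L})^G$.

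Next I would unwind the right-hand side step by step. Set $\pazocal{G}_1\coloneqq\left(\Res_{\Delta_L/\Delta_F}\pazocal{F}\right)^N$, a functor on $\Sch_{\Delta_F}$; the key point here is that $N\trianglelefteq G$ is normal, so the residual action of $H=G/N$ on $\Res_{\Delta_L/\Delta_F}\pazocal{F}$ descends to an action on $\pazocal{G}_1$, and this action is compatible with the $H$-action on $\Delta_F$ (which is itself the quotient of the $G$-action on $\Delta_L$). Then for a test scheme $T\in\Sch_\Delta$ we compute $\left(\Res_{\Delta_F/\Delta}\pazocal{G}_1\right)(T)=\pazocal{G}_1(T_{\Delta_F})=\left(\Res_{\Delta_L/\Delta_F}\pazocal{F}\right)(T_{\Delta_F})^N=\pazocal{F}\left((T_{\Delta_F})\times_{\Delta_F}\Delta_L\right)^N=\pazocal{F}(T_{\Delta_L})^N$, using the transitivity of base change $(T\times_\Delta\Delta_F)\times_{\Delta_F}\Delta_L\cong T\times_\Delta\Delta_L$. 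Taking the further $H$-fixed points gives $\left(\pazocal{F}(T_{\Delta_L})^N\right)^H$. Since the $G$-action on $\pazocal{F}(T_{\Delta_L})$ restricts to the $N$-action and induces the $H$-action on the $N$-fixed subset, and since a subgroup together with a complementary quotient generate all of $G$ in the sense that an element is $G$-fixed iff it is $N$-fixed and $H$-fixed, we obtain $\left(\pazocal{F}(T_{\Delta_L})^N\right)^H=\pazocal{F}(T_{\Delta_L})^G$, matching the left side. Finally I would check that all these identifications are natural in $T$ — each is either the definition of $\Res$, the associativity of fibre products, or the functoriality of taking fixed points — so they assemble into a natural isomorphism of functors over $\Delta$.

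The main obstacle I expect is bookkeeping around the group actions rather than anything deep: one must verify carefully that the $N$-action on $\Res_{\Delta_L/\Delta_F}\pazocal{F}$ is well-defined and compatible with $\Delta_F$ (so that the iterated restriction of scalars even makes sense as stated), that the induced $H=G/N$-action on the $N$-fixed subfunctor is the "correct" one, and that the equality $\pazocal{F}(T_{\Delta_L})^G=\left(\pazocal{F}(T_{\Delta_L})^N\right)^H$ holds as stated — this last is the elementary fact that for a $G$-set $S$ and normal $N\trianglelefteq G$ with $H=G/N$, one has $S^G=(S^N)^H$, but it requires knowing that $G$ is generated by $N$ and a set of coset representatives, which is automatic, together with the observation that the $H$-action on $S^N$ is genuinely the one induced from $G$. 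None of this is hard, but it is the place where a careless argument could go wrong, so I would state the $G$-set identity as a preliminary remark and then apply it verbatim with $S=\pazocal{F}(T_{\Delta_L})$.
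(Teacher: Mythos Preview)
Your proposal is correct and follows essentially the same approach as the paper: both arguments evaluate on a test scheme $T\in\Sch_\Delta$, use transitivity of base change $(T\times_\Delta\Delta_F)\times_{\Delta_F}\Delta_L\cong T\times_\Delta\Delta_L$, and invoke the elementary identity $S^G=(S^N)^H$ for a $G$-set $S$. The only cosmetic difference is that the paper first factors $\Res_{\Delta_L/\Delta}=\Res_{\Delta_F/\Delta}\circ\Res_{\Delta_L/\Delta_F}$ and then reduces to the $H$-equivariant isomorphism $\Res_{\Delta_F/\Delta}\bigl((\Res_{\Delta_L/\Delta_F}\pazocal{F})^N\bigr)\cong\bigl(\Res_{\Delta_F/\Delta}\Res_{\Delta_L/\Delta_F}\pazocal{F}\bigr)^N$, whereas you unwind both sides directly; the computations are the same.
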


\begin{proof}
The left hand side is equal to
$$ {\left(\Res_{\Delta_F/\Delta} \left(\Res_{\Delta_L/\Delta_F}\pazocal{F}\right)\right)}^G= {\left ( {\left(\Res_{\Delta_F/\Delta} \left(\Res_{\Delta_L/\Delta_F}\pazocal{F}\right)\right)}^N \right )}^H \virgola $$
where $N$ acts as a subgroup of $G$ and $H=G/N$ inherits the action of $G$ on the $N-$invariant locus.
Thus, we only need to show that there is an $H-$equivariant isomorphism of functors:
$$ \Res_{\Delta_F/\Delta}{\left( \Res_{\Delta_L/\Delta_F}\pazocal{F}\right)}^N \cong  {\left(\Res_{\Delta_F/\Delta} \left(\Res_{\Delta_L/\Delta_F}\pazocal{F}\right)\right)}^N  \punto $$
Given a scheme morphism $T\rightarrow \Delta$, we have that
\begin{eqnarray*}
\Res_{\Delta_F/\Delta} {\left(\Res_{\Delta_L/\Delta_F}\pazocal{F}\right)}^N(T) & =  & {\left(\Res_{\Delta_L/\Delta_F}\pazocal{F}\right)}^N(T\times_\Delta \Delta_F) \\
 & = & {\left(\pazocal{F}(T\times_\Delta \Delta_L)\right)}^N \\
 & = & {\left(\Res_{\Delta_F/\Delta} \left(\Res_{\Delta_L/\Delta_F}\pazocal{F}\right)(T)\right)}^N \virgola 
\end{eqnarray*}
and we are done.
\end{proof}

\subsubsection{} The construction above can be made more explicit: in the following paragraphs we will describe the central fibre and the canonical divisor of $\mathfrak{X}$ in terms of $\mathfrak{X}'$.

\subsection{Weil restriction and the central fibre}
\label{par:centrfib}

\subsubsection{} This subsection and the next one summarize some results contained in an unpublished manuscript of Lars Halle and Johannes Nicaise. I am grateful to them for letting me use these results which are crucial for the computation of the motivic integral in \S \ref{subsect:compmotint}. We keep the notation of the previous paragraph.


\subsubsection{}
\label{par:bunmod}
The inclusion $\mathfrak{X}\subseteq \Res_{R_L/R} \mathfrak{X}'$ corresponds, according to the definition of the restriction of scalars, to a map of $R_L-$schemes
$$ h\colon \mathfrak{X}\times_R R_L\rightarrow \mathfrak{X}' \virgola $$
which gives, over the special fibres a morphism of $k-$schemes:
$$ h_k\colon \mathfrak{X}_k \rightarrow \mathfrak{X}_k' \punto $$

\subsubsection{} On the other hand, we can characterize $\mathfrak{X}_k$ in a different way, using the \emph{Greenberg schemes}.The following definition will cover the cases we shall use: 

Let $d=[L:K]$ and let $\mathfrak{m}$ be the maximal ideal of $R_L$; for $i\in \{0,\dots, d-1\}$, let $R_{L,i}\coloneqq R_L/(\mathfrak{m}^{i+1})$. For a separated, smooth morphism $\pazocal{A}\rightarrow \Spec R_L$, consider the functor
$$ \Gr_i(\pazocal{A})\coloneqq \Res_{R_{L,i}/k}(\pazocal{A}\times_{R_L} R_{L,i}) \virgola $$
which is representable by a separated, smooth scheme, as it follows from the proof of \cite[Proposition 7.6]{BLR}; this is also called the called \emph{level $i$ Greenberg scheme of $\pazocal{A}$}. 

Clearly, $\Gr_0(\pazocal{A})=\pazocal{A}_k$, while $\Gr_{d-1}(\pazocal{A})= \Res_{R_L/R}(\pazocal{A})_k$, since $d=[L:K]$ is also the ramification index of $R_L\rightarrow R$ at their closed points. 

\subsubsection{} In our case we have that $\mathfrak{X}_k=(\Gr_{d-1}(\mathfrak{X}')^\pazocal{G}$. Indeed, if $T$ is a $k-$scheme, we have that
$$ \mathfrak{X}_k(T)={ \left\{ { f\colon T\times_R R_L\rightarrow \mathfrak{X}' } \right\} }^{\pazocal{G}} = (\Gr_{d-1}(\mathfrak{X}')(T))^{\pazocal{G}}  \punto $$

\subsubsection{} \label{sect:affbdlWR} The natural truncation maps of Greenberg schemes define $\pazocal{G}-$equivariant affine bundles, in particular $\Gr_{d-1}(\mathfrak{X}')\rightarrow \Gr_0(\mathfrak{X'})=\mathfrak{X}'_k$ is a composition of affine bundles. By taking the $\pazocal{G}-$invariant loci of this map, we get a description, at least locally, of $\mathfrak{X}_k$ as an affine bundle over $(\mathfrak{X}'_k)^\pazocal{G}$, in the sense that for each connected component $C\subseteq \mathfrak{X}_k$, there is a connected component $C'\subseteq (\mathfrak{X}'_k)^\pazocal{G}$ such that $C$ is an $\A_k^r-$bundle over $C'$, where $r= \dim(\mathfrak{X}'_k)-\dim C'$. In particular the following relation holds in $K_0(\Var_k)$:
$$ [C]=\L^{\dim (\mathfrak{X}_k)-\dim C'}[C'] \punto $$

\subsection{Weil restriction and canonical divisor}

\subsubsection{} Let us keep the notation introduced in the previous paragraph, but we also assume that $X$ is a Calabi-Yau variety, i.e. it has trivial canonical bundle, and that a volume form $\omega \in \Omega_{X/K}^{\dim X}(X)$ is given. Let $\omega_L\in \Omega_{X_L/L}^{\dim X}(X_L)$ be the pull-back of $\omega$ under the base-change map. In this paragraph we shall study the order of vanishing of $\omega$ on each component of $\mathfrak{X}_k$, which we will  define as follows, adapting the definition given in \cite[\S 4.1]{LS}.

\subsubsection{} Let $p\in \mathfrak{X}_k$ be a closed point. Since $R$ is an Henselian ring and since $\mathfrak{X}\rightarrow \Spec R$ is smooth, there is at least a section $\psi \colon \Spec R \rightarrow \mathfrak{X}$ such that $\psi(0)=p$. Consider the line bundle $L\coloneqq \psi^\ast \Omega^{\dim X}_{\mathfrak X/R}$ over $\Spec R$. There is $a\in \Z$ such that $\pi^a \omega$ extends to a global section of $\omega'\in\Omega^{\dim X}_{\mathfrak X/R}(\mathfrak{X})$, where $\pi\in R$ is the uniformizer. So its pull-back $\psi^\ast (\omega')$ is a global section of $L$. Let $M\coloneqq L/\psi^{\ast}\omega' \bigO_{\Spec R}$ be the quotient of $\bigO_{\Spec R}-$modules.

\begin{definition}
The order of $\omega$ at $p$ is defined as:
$$ \ord_p(\omega)\coloneqq \inf \{b\in \N\colon \pi^b M = 0 \} -a \punto $$

If $C\subseteq \mathfrak{X}_k$ is a connected component, then $\ord_p(\omega)$ does not depend on the coice of the closed point $p\in C$, so we define $\ord_C(\omega)$ as the order of $\omega$ at any of its closed point. If $\ord_C(\omega)>0$ we say that $C$ is a \emph{zero} of $\omega$, if $\ord_C(\omega)<0$ we say that it is a \emph{pole} of $\omega$.
\end{definition}

\subsubsection{}
Let $Z$ be a smooth scheme defined over a field $F$ and let $V\rightarrow Z$ be a vector bundle over $Z$. Consider a cyclic group $G\cong \mu_d$ acting equivariantly on $V\rightarrow Z$ and let $z\in Z$ be a fixed point. There is a unique sequence of integers $(j_1,j_2,\dots, j_{\rk V})$ such that $0\leq j_1\leq j_2\leq\cdots\leq j_{\rk V} \leq d-1$ such that $V_z$ has a basis $v_1,\dots, v_{\rk V}$ of eigenvectors such that $\zeta \star v_i= \zeta^{-j_i} \cdot v_i$ (where $\zeta$ is any generator of $\mu_d$); the tuple $(j_i)_i$ is called the \emph{tuple of exponents} of the $G-$action. 

\begin{definition}
\label{def:conact}
We define the \emph{conductor} of the action of $G$ in $z$ as the sum:
$$ c(V,z)\coloneqq \sum_{i=1}^{\rk V} j_i  \punto $$
If $C\subseteq Z^G$ is an irreducible subscheme, then for all $z,z'\in C$ one has that $c(V,z)=c(V,z')$, so we simply denote by $c(V,C)$ either of the conductors. Moreover we denote $c(Z, C)$ the conductor $c(T_{Z}, C)$.
\end{definition}

\begin{lemma}
\label{lem:ordcond}
Let $C$ be a connected component of $\mathfrak{X}_k$ and let $C'=h(C)$, where $h$ is the map defined in \S \ref{par:bunmod}. Then:
$$ \ord_C(\omega)= \frac{\ord_{C'}(\omega_L)-c(\mathfrak{X}'_k,C')}{[L\colon K]} \punto $$
\end{lemma}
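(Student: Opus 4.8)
The plan is to compare the two orders of vanishing by working over the Greenberg schemes, using the local affine-bundle description of $\mathfrak{X}_k$ from \S\ref{sect:affbdlWR} and the compatibility of $h$ with the generic fibre. First I would fix a closed point $p\in C$ and a section $\psi\colon \Spec R\to\mathfrak{X}$ with $\psi(0)=p$; base-changing to $R_L$ and composing with $h$ produces a section $\psi'\colon\Spec R_L\to\mathfrak{X}'$ landing in $C'=h(C)$, and $\psi'$ is $\pazocal{G}$-invariant by construction. The key point is to relate $\psi^\ast\Omega^{\dim X}_{\mathfrak{X}/R}$ to $\psi'^\ast\Omega^{\dim X}_{\mathfrak{X}'/R_L}$. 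I would use the exact sequence of differentials for $\mathfrak{X}\times_R R_L\to\mathfrak{X}'\to\Spec R_L$ coming from the map $h$: since the generic fibre of $h$ is the canonical isomorphism $(\Res_{R_L/R}\mathfrak{X}')_K\supseteq\mathfrak{X}_K=X$ (Proposition \ref{prop:WreswNm} and \cite[Proposition A.5.2]{CGP}), $h$ is an isomorphism on generic fibres, so $h^\ast\Omega^{\dim X}_{\mathfrak{X}'/R_L}$ and $\Omega^{\dim X}_{\mathfrak{X}\times_R R_L/R_L}$ differ only by a divisor supported on the special fibre.

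The heart of the computation is to measure that discrepancy, and this is where the conductor enters. Locally near $C$, \S\ref{sect:affbdlWR} identifies $\mathfrak{X}_k$ with an $\A^r_k$-bundle over a component $C'$ of $(\mathfrak{X}'_k)^{\pazocal{G}}$, and this identification comes from taking $\pazocal{G}$-invariants in the tower of truncation affine bundles $\Gr_{d-1}(\mathfrak{X}')\to\cdots\to\Gr_0(\mathfrak{X}')=\mathfrak{X}'_k$. The normal bundle of $C'$ inside $\mathfrak{X}'_k$ carries the $\pazocal{G}\cong\mu_d$-action, and its tuple of exponents $(j_1,\dots)$ governs, via the standard toric/weighted-blowup computation, exactly how the relative canonical bundle transforms: each coordinate with exponent $j_i$ contributes a factor $\pi^{j_i/d}$ (equivalently, a uniformizer of $R_L$ raised to $j_i$, divided by $d$ after pushing down to $R$) to the comparison of volume forms. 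Summing over $i$ gives the correction term $c(\mathfrak{X}'_k,C') = \sum_i j_i$, and the division by $[L:K]=d$ accounts for the ramification $\pi = (\text{uniformizer of }R_L)^d$ up to a unit. Plugging $\omega_L = $ (unit)$\cdot\pi^{?}\cdot(\text{extension})$ into the definition of $\ord$ and tracking the power of $\pi$ through this change of variables yields
$$
\ord_C(\omega)=\frac{\ord_{C'}(\omega_L)-c(\mathfrak{X}'_k,C')}{[L\colon K]}.
$$

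The main obstacle I anticipate is making the local affine-bundle picture of \S\ref{sect:affbdlWR} precise enough to read off the differential-form comparison, i.e.\ choosing compatible local coordinates on $\mathfrak{X}'$ near a point of $C'$ that simultaneously diagonalize the $\mu_d$-action on the normal directions and are adapted to the Greenberg/truncation filtration, so that the restriction-of-scalars and invariant-locus operations become transparent monomial changes of variables. Once such coordinates are in place the power-of-$\pi$ bookkeeping is routine, and one checks at the end that the answer is independent of the choices of $p$, $\psi$, and the coordinates (consistency with the fact, already noted after the definitions, that $\ord_C$ and $c(\mathfrak{X}'_k,C')$ depend only on the components).
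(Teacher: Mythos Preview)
Your overall strategy---pull back along a section $\psi$, compose with $h$ to get a $\pazocal{G}$-invariant section $\psi'$ of $\mathfrak{X}'$, and compare the two pulled-back canonical line bundles, with the discrepancy supported on the special fibre and measured by the conductor---is exactly the paper's. The difference is in how that discrepancy is computed.

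You propose to work through the Greenberg truncation tower and choose explicit local coordinates on $\mathfrak{X}'$ diagonalising the $\mu_d$-action, then track powers of the uniformiser via a ``toric/weighted-blowup''-style change of variables. You rightly flag this coordinate step as the main obstacle, and the phrase ``toric/weighted-blowup'' is a bit off: no blowup is involved, only the inclusion of invariants.

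The paper bypasses the coordinate problem with a functor-of-points argument. From the very definition of $\mathfrak{X}$ as $\pazocal{G}$-fixed points in the Weil restriction one has
\[
T_{\mathfrak{X}/R}=\underline{\Hom}_R\bigl(R[\varepsilon]/(\varepsilon^2),\mathfrak{X}\bigr)=\underline{\Hom}_{R_L}\bigl(R_L[\varepsilon]/(\varepsilon^2),\mathfrak{X}'\bigr)^{\Gal(L|K)}=\bigl(T_{\mathfrak{X}'/R_L}\bigr)^{\Gal(L|K)},
\]
so after pulling back along $\psi'$ the tangent map $T_h$ is nothing but the inclusion $M^{\Gal(L|K)}\otimes_R R_L\hookrightarrow M$ for the free $R_L$-module $M=(\psi')^{\ast}T_{\mathfrak{X}'/R_L}$. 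Splitting $M$ along a lift of an eigenbasis of $T_{\mathfrak{X}'_k}$ reduces everything to a single rank-one statement (Lemma~\ref{lem:Gmodule}): for a free rank-one $R_L$-module with exponent $j$, the cokernel of that inclusion has length $j$. Summing over the eigenbasis gives $\length(\coker T_h)=c(\mathfrak{X}'_k,C')$, and dualising to top differentials yields the formula immediately.

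So your proposal is correct in outline but takes the harder road. The two simplifications worth absorbing from the paper are (i) the identification of $T_{\mathfrak{X}/R}$ as the Galois-invariants inside $T_{\mathfrak{X}'/R_L}$, which makes the comparison map purely module-theoretic and removes any need for coordinates on the scheme, and (ii) the reduction to the rank-one computation of Lemma~\ref{lem:Gmodule}, which replaces your anticipated coordinate chase by a three-line Hensel/eigenbasis argument.
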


\begin{proof}
The map $\mathfrak{X}\times_R R_L\rightarrow \mathfrak{X}'$ induces a monomorphism
$$\alpha\colon (\psi')^\ast \Omega_{\mathfrak{X}'/R_L}\rightarrow \psi^\ast\Omega_{\mathfrak{X}/R}\otimes_R R_L$$
sending $\omega_L$ to $\omega\otimes 1$; in particular
$$\length\left({\alpha\left({(\psi')^\ast  \Omega_{\mathfrak{X}'/R_L}}\right)/ \langle \omega\otimes 1 \rangle }\right)= [L:K] \length \left({(\psi')^\ast  \Omega_{\mathfrak{X}'/R_L}/ \langle\omega_L\rangle }\right) \virgola $$
thus the statement shall follow from the fact that $\coker \alpha= c(\mathfrak{X}'_k, C')$. 

On the other hand, under the identification 
$$T_{\mathfrak{X}/R}=\underline{\Hom}_R(R[\varepsilon]/(\varepsilon^2), \mathfrak{X}) =  \underline{\Hom}_{R_L}(R_L[\varepsilon]/(\varepsilon^2), \mathfrak{X}')^{\Gal(L|K)}= T_{\mathfrak{X}'/{R_L}}^{\Gal(L|K)} \virgola $$
the tangent map $T_h\colon T_{\mathfrak{X}/R\times_R R_L}\rightarrow T_{\mathfrak{X}'/R_L}$ induces a map 
$$\beta\colon (\psi')^\ast (T_{\mathfrak{X}'/R_L})^{\Gal(L|K)}\otimes_R R_L\rightarrow (\psi')^\ast T_{\mathfrak{X}'/ R_L} \punto $$
Fix a base of eigenvectors of $T_{\mathfrak{X}_k'}$; the upcoming Lemma \ref{lem:Gmodule}, applied to the subspaces generated by each element of the base, implies that $\coker(\beta)= \oplus_{i=1}^{d} R_L/\mathfrak{m}_L^{j_i}$, hence
$$ \coker(\alpha)= \bigwedge_{i=1}^d R_L/\mathfrak{m}_L^{j_i}= R_L/\mathfrak{m}_L^{c(\mathfrak{X}, C')}  \punto $$
\end{proof}

\begin{lemma}
\label{lem:Gmodule}
Let $M$ a free $R_L-$module of rank 1. Assuming that $\Gal(L|K)$ acts $R-$linearly on $M$ from the left. Let $j$ be the exponent of the action induced on $M\otimes_{R_L} k$, as in Definition \ref{def:conact}. Then the natural morphism $M^{\Gal(L|K)}\otimes_R R_L\rightarrow M$ has cokernel isomorphic to $R_L/\mathfrak{m}_L^j$.
\end{lemma}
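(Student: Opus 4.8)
The plan is to make the $\pazocal{G}\coloneqq\Gal(L|K)$-action on $M$ completely explicit, to produce a $\pazocal{G}$-invariant generator of $M_L\coloneqq M\otimes_{R_L}L$ after inverting $\pi$, and then to read off both $M^{\pazocal{G}}$ and the exponent $j$ by bookkeeping valuations. First I would fix notation: since $k$ is algebraically closed and $L/K$ is finite, it is totally ramified of degree $d\coloneqq[L:K]$, and tameness together with the Galois hypothesis give $\mu_d\subseteq K$, a uniformizer $\pi_L$ of $R_L$ with $\pi\coloneqq\pi_L^{\,d}$ a uniformizer of $R$, and a canonical isomorphism $\pazocal{G}\cong\mu_d$, $g\mapsto g(\pi_L)/\pi_L$; I fix a generator $\sigma$ corresponding to a primitive root of unity $\zeta\in\mu_d$, and write $v_L$ for the normalized valuation on $L$. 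The $\pazocal{G}$-action on $M$ is $R_L$-semilinear (being compatible with the Galois action on $R_L$ — this is the setting of its use in the proof of Lemma \ref{lem:ordcond}); choosing an $R_L$-basis $e$ of $M$ I can therefore write $\sigma(e)=u\,e$ with $u\in R_L^\times$, and $\sigma^{d}=\id$ forces $N_{L/K}(u)=1$.

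Next I would invoke Hilbert's Theorem 90 for the cyclic extension $L/K$: from $N_{L/K}(u)=1$ I get $w\in L^\times$ with $\sigma(w)/w=u$, and after multiplying $w$ by a suitable element of $K^\times$ I may assume $a\coloneqq v_L(w)\in\{0,\dots,d-1\}$. Then $e'\coloneqq w^{-1}e\in M_L$ is $\sigma$-invariant, so by Galois descent $M_L^{\pazocal{G}}$ is the one-dimensional $K$-subspace $K\cdot e'$. Intersecting with the lattice $M=R_L\,e$ and using $v_L(K^\times)=d\Z$ yields $M^{\pazocal{G}}=\pi^{c}R\cdot e'$, where $c\coloneqq\lceil a/d\rceil$ is the least integer with $\pi^{c}e'\in M$ (so $c=0$ if $a=0$ and $c=1$ otherwise). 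Consequently the natural map $M^{\pazocal{G}}\otimes_R R_L\to M$ has image the $R_L$-submodule generated by $\pi^{c}e'=\pi^{c}w^{-1}e$, that is $\mathfrak{m}_L^{\,cd-a}\cdot e$, so its cokernel is isomorphic to $R_L/\mathfrak{m}_L^{\,cd-a}$.

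It then remains to check $cd-a=j$. Writing $w=\pi_L^{\,a}w_0$ with $w_0\in R_L^\times$ gives $u=\zeta^{a}\,\sigma(w_0)/w_0$; since $L/K$ is totally ramified, $\sigma$ acts trivially on the residue field $k$, so reduction modulo $\mathfrak{m}_L$ gives $\bar u=\bar\zeta^{\,a}$ in $k^\times$, where $\bar\zeta$ is a primitive $d$-th root of unity. On the other hand, by Definition \ref{def:conact} the exponent $j\in\{0,\dots,d-1\}$ is characterized by the fact that $\sigma$ acts on $M\otimes_{R_L}k=k\cdot\bar e$ as multiplication by $\bar\zeta^{-j}$, i.e.\ $\bar u=\bar\zeta^{-j}$. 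Hence $\bar\zeta^{\,a+j}=1$, so $d\mid a+j$; since $cd-a$ also lies in $\{0,\dots,d-1\}$ and $cd-a\equiv-a\pmod d$, both $j$ and $cd-a$ are the representative of $-a$ in $\{0,\dots,d-1\}$, whence $j=cd-a$ and the cokernel is $R_L/\mathfrak{m}_L^{\,j}$, as claimed.

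The only non-formal ingredient is Hilbert 90, and it is essential that one may only take the auxiliary element $w$ in $L^\times$ rather than in $R_L^\times$ — the integral cohomology $H^1(\pazocal{G},R_L^\times)$ does not vanish (it is $\Z/d\Z$) — so the one delicate point is the valuation bookkeeping that identifies $M^{\pazocal{G}}$ as an $R$-lattice inside $M$ and matches $cd-a$ with $j$; everything else is formal.
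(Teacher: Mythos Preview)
Your proof is correct, and it takes a genuinely different route from the paper's.

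The paper works directly with the $R$-module structure: it chooses an $R_L$-generator $v$ of $M$, notes that the images $v_b\coloneqq\pi_L^b v$ in $M\otimes_R k$ give a $k$-basis on which $\sigma$ acts (after a triangular correction) with distinct eigenvalues $\zeta^{b-j}$, then lifts this to an $R$-eigenbasis $\{w_b\}$ of $M$ using that $|\pazocal{G}|=d$ is a unit in $R$; the fixed module $M^{\pazocal{G}}$ is then the single eigenline $R\cdot w_j$, and since $w_j$ reduces to an element of $\pi_L^j$-adic valuation exactly $j$, the image of $M^{\pazocal{G}}\otimes_R R_L$ in $M$ is $\mathfrak{m}_L^{\,j}M$.

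Your approach instead passes to $M_L=M\otimes_{R_L}L$, invokes Hilbert~90 to produce a $\pazocal{G}$-invariant $L$-basis vector $e'$, and then recovers $M^{\pazocal{G}}$ as the $R$-lattice $K e'\cap M$ by pure valuation bookkeeping; the identification $cd-a=j$ falls out by reducing the cocycle $u=\sigma(w)/w$ modulo $\mathfrak{m}_L$. This has the advantage of cleanly separating the descent step (finding the invariant line over $L$) from the integral step (locating the lattice), and it avoids having to lift an entire eigenbasis. The paper's approach, by contrast, yields the finer information of a full $R$-eigenbasis of $M$, which is not needed here but is conceptually pleasant. Both arguments ultimately rest on the same structure --- the action is semilinear with a single residual character $\zeta^{-j}$ --- but encode it differently: yours via a $1$-cocycle and Hilbert~90, the paper's via the representation theory of $\mu_d$ over $R$.
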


\begin{proof}
Let us fix an element $v\in M$ such that $0\neq v\otimes 1 \in M\otimes_{R_L} k$; by our hypothesis we have that $(\zeta\ast v)\otimes 1 = \zeta^{-j} v	\otimes 1$. Let $\pi_L$ a uniformizer for $R_L$ such that $\pi_L^d\in K$; if $0\leq b\leq d-1$ the vectors $v_b\coloneqq \pi_L^b v\otimes 1\in M\otimes_{R} k$ form a base of the vector space $M\otimes_{R} k \cong M\otimes_{R_L} R_L/\frak{m}_L^d$. Moreover that one is a base of $G-$eigenvectors, for $\zeta\ast v_b = \zeta^{b-j} v_b$. 

By Henselianity we can lift the base $\{v_b\}$ to an $R-$base $\{w_b\colon 0\leq b\leq d-1\}$ of $M$ such that $\zeta\ast w_b = \zeta^{b-j}w_b$. 

Now let $x= a_0w_0+\cdots + a_{d-1}w_{d-1}\in M$ be an arbitrary element. We have that $x\in M^G$ iff $x-\zeta\ast x=0$, i.e. iff
$$ \sum_{b=0}^{d-1} a_b(1-\zeta^{b-j})w_b =0 \virgola $$
therefore, the $R-$module $M^G$ is generated by $w_j$.

It follows that $M^G\otimes R_L$ is sent onto $w_jM\subseteq M$, which leads to our coveted statemet.
\end{proof}

\section{Equivariant semistable reduction}
\label{sect:equivsemsred}


\subsection{Preliminaries on logarithmic geometry}

\subsubsection{} As seen in Example \ref{ex:semistab}, every proper regular model of a smooth variety contains a weak Néron model, on the other hand it is not always possible to construct a semistable model over a DVR of all the varieties defined on its fraction field $K$. Given a variety $X$ over $K$ that admits a regular \emph{sncd} model, under suitable assumptions there exists a finite extension $F/K$ such that the basechange $X_F$ admits a semistable model over the ring of integers in $F$. In this section we will show a way to produce such models over Galois extensions of a given field in such a way that the Galois group acts naturally on the model.

\subsubsection{} From now on, let $R$ be a DVR and, let $K$ be its fraction field and $k$ its residue field, which we assume being perfect and containing all the roots of unity. Let $\Delta\coloneqq \Spec R$, $0\in \Delta$ be its closed point and $\Delta^\ast\coloneqq \Delta \backslash 0= \Spec K$. Due to our assumptions, given a positive integer $m$, not divisibe by the characheristic of $k$ (if positive), there is a unique finite morphism of degree $m$, $\Delta(m)\rightarrow \Delta$, which is tame and totally ramified over $0$ and such that $\Delta^\ast(m)\rightarrow \Delta^\ast$ is a Galois étale morphism with automorphism group $\Aut_\Delta(\Delta(m))\cong \mu_m$; we denote by $0_m\in \Delta(m)$ the ramified point, omitting the index $m$ if no confusion with $0\in \Delta$ may arise.

\subsubsection{} We use the standard language of monoids and logarithmic geometry, as in \cite{GR} or in \cite{Bul} and we refer to \cite{Qu} for the material concerning toric schemes. We only recall here the notions that will be used later in this section:

\subsubsection{} Let $V^\dagger=(V,\pazocal{M}_V)$ be a locally Noetherian fs logarithmic scheme and $x\in V$ be a point. We say that $V^\dagger$ is log regular at $x$ if the following:
\begin{itemize}
\item The ring $\bigO_{V,x}/\pazocal{M}_{V,x}\bigO_{V,x}$ is regular;
\item $\dim \bigO_{V,x} =\dim \bigO_{V,x}/\pazocal{M}_{V,x}\bigO_{V,x} + \dim \pazocal{M}_{V,x}$.
\end{itemize}
are satisfied. A log structure $V^\dagger$ is logarithmically regular if it is log regular at each of its points. 


\subsubsection{} A \emph{(sharp) monoidal space} is a couple $(T, \pazocal{M}_T)$ consisting of a topological space $T$ and a sheaf of (sharp) monoids over $T$. A morphism of monoidal spaces $(T', \pazocal{M}_{T'})\rightarrow (T, \pazocal{M}_T)$ is the datum of a continuous map $f\colon T'\rightarrow T$ and a map of sheaves of monoids $h\colon f^{-1}(\pazocal{M}_{T'})\rightarrow \pazocal{M}_{T}$. If $(T, \pazocal{M}_T)$ is a monoidal space, one can define its sharpification $(T, \pazocal{M}_T)^\sharp\coloneqq (T, \pazocal{M}_T^\sharp)$, where $\pazocal{M}_T^\sharp$ is the sheafification of $U\mapsto \pazocal{M}_T(U)^\sharp$.

\subsubsection{} Let $P$ be a monoid, consider the set $\Spec P$ of its prime ideals, i.e. its submonoids whose complement in $P$ is again a submonoid of $P$, endowed with the topology generated by $\{D(f)\colon f\in P\}$, where $D(f)= \{\mathfrak{p}\in \Spec P \colon f\notin \mathfrak{p}\}$. Let $\pazocal{M}_P$ be the sheaf of monoids such that $\pazocal{M}_P(D(f))=P_f$. The space $(\Spec P, \pazocal{M}_P)$, or simply $\Spec P$, is the \emph{spectrum} of $P$. Moreover $(Spec P)^\sharp$ is called the \emph{sharp spectrum} of $P$.

\subsubsection{} A \emph{fan} is a sharp monoidal space $(F,\pazocal{M}_F)$, that can be covered by open subsets isomorphic to sharp spectra of monoids. A fan is \emph{locally fine and saturated} (or locally fs) if it can be covered by spectra of fs monoids. The category of fans is a full subcategory of the category of sharp monoidal spaces, i.e. a morphism of fans is just a morphism of monoidal spaces between fans.

\subsubsection{} Let $(F,\pazocal{M}_F)$ be a fan. A subdivision is a morphism of fans $\varphi \colon (F',\pazocal{M}_{F'})\rightarrow (F,\pazocal{M}_F)$ such that:
\begin{itemize}
\item For every $t\in F'$, the map induced on stalks $\pazocal{M}_{F, \varphi(t)}^{\grp} \rightarrow \pazocal{M}_{F,t}^{\grp}$ is surjective;
\item The composition with $\varphi$ induces a bijection $\Hom(\Spec \N, F')\rightarrow \Hom(\Spec \N, F)$.
\end{itemize}

\subsubsection{} Let $V^\dagger=(V,\pazocal{M}_V)$ be a log regular scheme. There is a fan associated to $V$ whose underlying topological space is the topological subspace of $V$ whose points are $F(V)\coloneqq \{x\in V\colon \mathfrak{m}_x \mbox{ is generated by } \pazocal{M}_{V,x}\backslash \bigO_{V,x}^\times\}$, where $\mathfrak{m}_x$ is the maximal ideal of $\bigO_{V,x}$, and whose structural sheaf $\pazocal{M}_{F(V)}$ is the restriction of $\pazocal{M}_V$.

\subsection{The construction of a semistable model}

\label{subsect:tripoiloc}

\subsubsection{} Let $\Delta^\dagger$ be the log scheme supported on $\Delta$ with the divisorial log structure associated to $0\in \Delta$. Similarly $\Delta^{\dagger}(m)$ shall denote the scheme $\Delta(m)$ introduced above endowed with the divisorial log structure for $0\in \Delta(m)$. Let $S\rightarrow \Delta$ a flat morphism of relative dimension 2 with smooth generic fibre and let $S^\dagger$ be the divisorial logarithmic structure associated to the central fibre $S_k$. Assume $S^\dagger\rightarrow \Delta^\dagger$ to be logarithmically smooth. Let us fix a positive integer $N$ such that the multiplicity of any irreducible component of $S_0$ divides $N$ and let $m$ be an arbitrary positive integer; in case $\chara K=p>0$ let us assume that the multiplicities of the irreducible components of $S_0$ are coprime with $p$, choose $N,m$ coprime with $p$. Let $\Gamma$ be the fan associated to the logarithmic structure of $S^\dagger$.

\subsubsection{} The fs-basechange $S^\dagger\times_{\Delta^\dagger}^{\fs} \Delta(mN)^\dagger\rightarrow \Delta^\dagger$ which is a normal space, yet not necessarily regular, is canonically endowed with an equivariant $\mu_{mN}-$action which induces an action on the fan of $S^\dagger\times_{\Delta^\dagger}^{\fs} \Delta(mN)^\dagger$, which we call $\Gamma(mN)$.

\subsubsection{} We define an equivariant resolution of $S^\dagger\times_{\Delta^\dagger}^{\fs} \Delta(mN)^\dagger\rightarrow \Delta^\dagger$ as follows:
\begin{enumerate}[label= \emph{Step \roman*.}]
\item \emph{Choosing the locus for the blow-up.} We choose any conic cell of $\Gamma(mN)$ and we choose a simplicial subdivision of such cell; we copy this subdivision in all the cells lying in the same orbit of the action of $\mu_{mN}$. We repeat this step until we obtain a simplicial cone complex, which we name $\widetilde{\Gamma}(mN)$.
\item \emph{The blow-up gives a semistable model.} 
Because of \cite[Lemma 2.4 (2) and Lemma 2.7 (2)]{Sai}, the simplicial subdivision in step 1 induces a resolution $p\colon S(mN)^\dagger\rightarrow S^\dagger\times_{\Delta^\dagger}^{\fs} \Delta(mN)^\dagger$ whose underlying scheme $S(mN)$ is semistable.
\item \emph{Description of the action of $\mu_{mN}$.} Let $\zeta\in \mu_{mN}$ a generator and let $\zeta\colon S^\dagger\times_{\Delta^\dagger}^{\fs} \Delta(mN)^\dagger\rightarrow S^\dagger\times_{\Delta^\dagger}^{\fs} \Delta(mN)^\dagger$ its induced isomorphism. By composition we obtain a birational morphism $S(mN)\rightarrow S^\dagger\times_{\Delta^\dagger}^{\fs} \Delta(mN)^\dagger$ and, thence, a birational map $S(mN)\dashrightarrow S(mN)$.This is actually an isomorphism because the subdivision of the fan is equivariant and so is the ideal involved in the blow-up.
\end{enumerate}

\subsubsection{} We conclude the section with the following lemma which grasp the most important, at least for our purpose, property of the action of $\mu_{mN}$ on $S(mN)$; in fact this is the only reason why we did perform the construction in this way:

\begin{definition}
For each point $p\in S(mN)_k$ let $\Stab_p\subseteq \mu_{mN}$ be the subgroup consisting of the elements that fix $p$.
\end{definition}

\begin{lemma}
\label{lem:constab}
$\Stab_{-}$ is locally constant on $S(mN)_{k,\sm}$. 
In particular if a point $p\in S(mN)_{k,\sm}$ is fixed under the action of $\mu_N$, then the whole connected component of $S(mN)_{k,\sm}$ containing $p$ is fixed under such action.
\end{lemma}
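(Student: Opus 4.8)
The plan is to argue locally on $S(mN)_{k,\sm}$, showing that the stabilizer subgroup is constant on a Zariski-neighbourhood of every smooth point of the special fibre; the second assertion then follows because $\mu_N \subseteq \mu_{mN}$ is a fixed subgroup, so the locus fixed by $\mu_N$ is open and closed in $S(mN)_{k,\sm}$, hence a union of connected components. First I would recall from the construction in \S\ref{subsect:tripoiloc} that $S(mN)^\dagger$ is a log smooth, semistable model obtained from an equivariant simplicial subdivision $\widetilde{\Gamma}(mN)$ of the fan $\Gamma(mN)$ of the fs-base change. The key structural fact I would use is that, because the subdivision is equivariant, the $\mu_{mN}$-action on $S(mN)$ is compatible with the stratification of $S(mN)$ by the cones of $\widetilde{\Gamma}(mN)$: each point $p$ lies in a unique stratum, the action permutes strata, and $\Stab_p$ acts on the stratum of $p$.

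The heart of the argument is a local toric description near a smooth point. Let $p \in S(mN)_{k,\sm}$. Since $S(mN) \to \Delta(mN)$ is semistable and $p$ is a smooth point of the special fibre, $p$ lies either on exactly one irreducible component of $S(mN)_k$ (so the corresponding cone of $\widetilde{\Gamma}(mN)$ is a ray $\N$) — the double-point locus being excluded by smoothness of the fibre. Étale-locally around $p$, the log smooth morphism $S(mN)^\dagger \to \Delta(mN)^\dagger$ looks like a toric chart $\Spec k[P] \to \Spec k[\N]$ with an action of a subgroup of $\mu_{mN}$ through the torus, and the $\mu_{mN}$-action is encoded combinatorially by a character on the cone. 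I would show that on such a chart the stabilizer of $p$ depends only on the cone $\tau$ of $\widetilde{\Gamma}(mN)$ containing $p$ — concretely, $\Stab_p$ is the kernel of the character of $\mu_{mN}$ determined by $\tau^{\mathrm{gp}}$ inside $\pazocal{M}^{\mathrm{gp}}$ — hence is constant as $p$ varies within the (open) stratum attached to $\tau$. Running over the finitely many rays of $\widetilde{\Gamma}(mN)$ meeting the smooth locus gives an open cover of $S(mN)_{k,\sm}$ on each piece of which $\Stab_{-}$ is constant, which is the local constancy claim.

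Finally, for the ``in particular'' statement: if $p \in S(mN)_{k,\sm}$ is fixed by $\mu_N$, then $\mu_N \subseteq \Stab_p$; by local constancy $\Stab_{-} = \Stab_p$ on the connected component $C$ of $S(mN)_{k,\sm}$ containing $p$, so $\mu_N \subseteq \Stab_q$ for every $q \in C$, i.e. $C$ is fixed pointwise by $\mu_N$. The main obstacle I anticipate is making the toric-chart computation of $\Stab_p$ precise and genuinely equivariant: one must verify that the $\mu_{mN}$-action produced in \emph{Step iii.} of the construction acts on each chart through the torus in a way governed solely by the combinatorics of $\widetilde{\Gamma}(mN)$, so that the stabilizer only sees the cone and not the point within its stratum. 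Once that compatibility is in hand — essentially a bookkeeping consequence of the equivariance of the subdivision and of \cite[Lemma 2.4, Lemma 2.7]{Sai} — the rest is formal.
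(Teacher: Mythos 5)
Your overall strategy coincides with the paper's: reduce to local toric charts, show that the $\mu_{mN}$-action there factors through a torus, and conclude that the stabilizer depends only on the stratum (torus orbit) containing the point. But as written the proposal has a genuine gap, and you have in fact located it yourself: the assertion that the $\mu_{mN}$-action ``acts on each chart through the torus in a way governed solely by the combinatorics'' is not a bookkeeping consequence of the equivariance of the subdivision --- it is the core of the argument. In the paper this is Claim \ref{clm:subtoract}: one exhibits a codimension-one subtorus $T'\subseteq T(mN)$, corresponding to the quotient of $P(mN)^{\grp}$ by the image $u$ of the generator of $\frac{1}{mN}\N$, checks that its action on $\Spec R[P(mN)]$ preserves $V\times^{\fs}_\Delta\Delta(mN)$ and has a dense orbit there, and uses the exact sequence \eqref{eqn:exactori} to see that $\mu_{mN}$ sits inside $T'$. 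The conclusion is then drawn in a slightly different form from yours: rather than identifying $\Stab_p$ as the kernel of a character attached to the ray, the paper observes that on each torus orbit $O$ a suitable quotient torus $\overline{T}$ of $T'$ acts freely, so the stabilizer of any point of $O$ lies in $\ker(\mu_{mN}\to\overline{T})$ and hence acts trivially on all of $O$. One must also treat separately the points whose image in $S$ lies on components of $S_0$ of common multiplicity $d>1$: there the local fs base change splits into $d$ copies permuted cyclically by the Galois group, and one reduces to the primitive case applied to $\mu_{mN/d}$. None of this is carried out in your proposal.

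A second omission is the passage between the étale-local toric model and $S(mN)$ itself. An equivariant étale map does not obviously preserve stabilizers: a priori $\Stab_q\subseteq\Stab_{\psi(q)}$ could be strict, and local constancy on the toric model does not formally descend. The paper handles this in two steps: first, for $q$ in the étale neighbourhood $U(mN)$, the induced $\Stab_q$-equivariant map of completed local rings $\widehat{\pazocal{O}_{O,\psi(q)}}\rightarrow\widehat{\pazocal{O}_{Q,q}}$ is an isomorphism because $k$ is algebraically closed, so triviality of the action on the orbit downstairs forces triviality in a neighbourhood of $q$, and closedness of the fixed locus then gives triviality on the whole connected component; second, to descend from $U(mN)$ to the Zariski open $W(mN)\subseteq S(mN)$, one shrinks $U$ so that the orbit of $q$ maps bijectively onto the orbit of its image, whence the two stabilizers agree. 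Your deduction of the ``in particular'' statement from local constancy is correct.
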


\begin{proof}
Each point of $S$ admits an étale neighbourhood $U\twoheadrightarrow W \subseteq S$ (with $W$ being a Zariski open subset of $S$) such that the map $U\rightarrow \Delta$ splits through a smooth map $U\rightarrow V \subseteq \Spec R[P]$, where $P$ is a torsion-free monoid giving a local chart $P\rightarrow \pazocal{M}$ for the logarithmic structure on $U$. The embedding $V\hookrightarrow \Spec R[P]$ is given by the ideal $(\chi^v-\pi)\subseteq R[P]$, where $v\in P$ is the image of $1$ under the map of monoids $\N\rightarrow P$ giving the chart for $U^\dagger \rightarrow \Delta^\dagger$. 

Let $U(mN)\rightarrow W(mN)\subseteq S(mN)$ be the base change of $U\rightarrow W\subseteq S$ with respect to the map $S(mN)\rightarrow S$ arising from the construction above and let $V(mN)\rightarrow V\times_\Delta \Delta(mN)$ be the toric resolution arising from the same subdivision that we performed above; in particular there is a smooth map $\psi \colon U(mN)\rightarrow V(mN)$ which is equivariant with respect to the natural action of $\mu_{mN}=\Gal(K(mN)|K)$. 

Let us omit the symbol $\dagger$ from the log-schemes, all the object we deal with are interpreted in the category of log-schemes unless differently stated. Consider the following equivariant map:
$$ V(mN)\rightarrow V\times^{\fs}_{\Delta} \Delta(mN) \subset \Spec R{\left[ {\left( P\oplus_{\N} \frac{1}{mN}\N \right)}^{\sat} \right]} \eqqcolon \Spec R[P] \times^{\fs}_{\Delta} \Delta(mN) \virgola $$
where the amalgamated sum is taken with respect to the obvious inclusion $\N\subseteq \frac1{mN}\N$ and $\N\rightarrow P$, $1\mapsto v\in P$ defined above. This map is an isomorphism outside the central fibre of the two schemes.

There is an integer $d$ and a primitive element $v_1\in P$ such that $v=d\cdot v_1$ (in our case, i.e. of a log structure arising from an \emph{snc} divisor, $d$ coincides with the \emph{g.c.d.} of the multiplicities of the components containing $p$, in general it was defined as \emph{root index} in \cite{BN}); by our assumption on $N$, we have that $d|mN$, thus the following identity holds:
$$ V\times^{\fs}_{\Delta} \Delta(mN) = (V\times^{\fs}_{\Delta} \Delta(d)) \times^{\fs}_{\Delta(d)}  \Delta(mN) \punto $$

We can, thus, consider the base-changes separately; the undelying scheme of $V\times^{\fs}_{\Delta} \Delta(d)$ is the normalization of the base-change in the category of schemes \cite[Proposition 3.7.1]{BN}. We have that $V\times_\Delta \Delta(d)\cong \Spec R(d)\left[ { P} \right]/(\chi^{dv_1}-\varpi^d)$, where $\varpi\in R(d)$ is a uniformizer such that $\varpi^d=\pi$. 
The chart of $\Spec R(d)[ P ]/(\chi^{dv_1}-\varpi^d)$ is given by the map of monoids \begin{align*}
 P\oplus_{\N} \frac{1}{d}\N  &\rightarrow R(d)\left[ { P} \right]/(\chi^{dv_1}-\varpi^d) \\ 
\left(0,\frac1d\right) &\mapsto \varpi \punto
\end{align*} One checks that $\displaystyle {\left(P\oplus_{\N} \frac{1}{d}\N\right) }^{\sat} \cong P\oplus \Z/d\Z$, where the latter factor is generated by $\displaystyle \left(v_1, -\frac1d\right)$.
Then $V\times^{\fs}_{\Delta} \Delta(d)\cong \coprod_{i=0}^{d-1} \Spec R(d)\left[ {
 P} \right]/(\chi^{v_1}-\zeta_d^i\varpi)$, and $\Gal (K(d)|K)$ acts on it via a cyclic permutation of the components.

We now assume that $d=1$, i.e. that $v$ is primitive. In this case the monoid $(P\oplus_\N \frac1{mN}\N)^{\sat}$ is sharp \cite[Proposition 2.2.2 (3)]{BN} and the inclusion $P^{\grp}\subseteq (P\oplus_\N \frac1{mN}\N)^{\grp}$ induces an étale map of tori $T(mN)\coloneqq\Spec \Z\left[ (P\oplus_\N \frac1{mN}\N)^{\grp} \right]\rightarrow T\coloneqq \Spec \Z[P^{\grp}]$ of degree $mN$, which is the quotient with respect to the action of $\mu_{mN}$; in particular the group $\mu_{mN}$ acts freely and transitively on the kernel of such map and there is an exact sequence of group schemes over $\Spec \Z$:
\begin{equation}
\label{eqn:exactori}
 1\rightarrow \mu_{mN}\rightarrow T(mN)\rightarrow T \rightarrow 1 \punto
\end{equation}
On the other hand, let us consider the $1-$codimensional subtorus $T'\subseteq T(mN)$ corresponding to the quotient $P(mN)^{\grp}\rightarrow P(mN)^{\grp}/\langle u \rangle$, where $u$ is the image of the generator of $\frac{1}{mN}\N$ in $P(mN)$. 
\begin{claim}
\label{clm:subtoract}
The action of $T(mN)_R$ on $\Spec R[P(mN)]$ induces an action 
$$ T'_R\times_{\Delta} (V\times_\Delta^{\fs} \Delta(mN)) \rightarrow V\times_\Delta^{\fs} \Delta(mN) $$
making $V \times_\Delta^{\fs} \Delta(mN)\rightarrow \Delta$ a toric scheme with respect to the torus $T'$.
\end{claim}
The equivariant toric resolution $V(mN)\rightarrow V\times^{\fs}_\Delta \Delta(mN)$ is an isomorphims over an open set containing the dense toric orbit, hence $\Gal(K(mN)|K)$ acts also on $V(mN)$ as a subgroup of $T'$. 

Let $O \subseteq V(mN)$ be a locally closed stratum in the canonical stratification of $V(mN)$, i.e. an orbit for the action of $T'$, as described in \cite[2.1.13]{Qu}; then a suitable quotient $T'\twoheadrightarrow \overline{T}$ acts freely on $O$ and, thus, the image of $\mu_{mN}$ in $\overline{T}$ acts freely on $O$, hence the stabilizer of an arbitrary point $q\in O$ acts trivially on the whole orbit. 

In the general case, we may apply the above argument to the map $\Delta(mN)\rightarrow \Delta(d)$, obtaining the following chain of maps:
$$ V(mN)\rightarrow V\times^{\fs}_\Delta \Delta(mN) \rightarrow V\times^{\fs}_\Delta \Delta(d) \rightarrow  V  \virgola $$
where the first map is a map of toric schemes (over $\Delta(d)$) whose main torus fits in the sequence
$$ 1\rightarrow \mu_{\frac{mN}{d}}\rightarrow T(mN) \rightarrow T(d) \rightarrow 1 $$
and the last map is just the collapse of $d$ copies of $V$. Since the generator of $\mu_{mN}$ acts on $V(mN)$ by permuting the $d$ connected components, then the stabilizer of each point is contained in the subgroup generated by its $d-$th power, i.e. $\mu_{\frac{mN}{d}}$, in particular the fact that the stabilizer of a point is locally constant on each orbit follows from what said for the $d=1$ case.  


Let $q\in U(mN)_{k,\sm}$ be a closed point and let $O$ be the stratum of $V(mN)$ containing $\psi(q)$; and let $Q\coloneqq \psi^{-1}(O)\subseteq U(mN)$. Since $q\in U(mN)_{k,\sm}$, then $O$ is a connected component of $V(mN)_{k,\sm}$, therefore $Q$ is open in $U(mN)_{k,\sm}$. Since the map $U(mN)\rightarrow V(mN)$ is equivariant, we have that $\Stab_q \subseteq \Stab_{\psi(q)}$, thus $\Stab_q$ acts trivially on $O$. The étale map $Q\rightarrow O$ induces a $\Stab_q-$equivariant étale map of the complete local rings $\widehat{\pazocal{O}_{O,\psi(q)}}\rightarrow \widehat{\pazocal{O}_{Q,q}}$, which is an isomorphism since $k$ is algebraically closed. In particular $\Stab_q$ acts trivially on $\widehat{\pazocal{O}_{Q,q}}$ which is the formal completion of the local ring $\pazocal{O}_{Q,q}$, in particular $\Stab_q$ acts trivially in a neighbourhood of $q$. Since the fixed locus of $\Stab_q$ is also closed, it acts trivially on the whole connected component containing $q$. It follows that $\Stab_{-}$ is locally constant on $U(mN)_{k,\sm}$.

In order to conclude that the stabilizer is locally constant on $W(mN)_{k,\sm}$ as well, we shall prove that each point $q\in U(mN)_{k,\sm}$ has the same stabilizer as its image $p\in W(mN)$. Let $q_0\in U$ be the image of $q$ under $U(mN)\rightarrow U$ and similarly let $p_0\in W$ be the image of $p$ under $W(mN)\rightarrow W$. Up to replacing $U\rightarrow V$ with an open subset $U'\subseteq U\rightarrow V$ we can assume that $q_0$ is the only preimage of $p_0$ under $U\rightarrow W$; in particular the orbit of $q\in U(mN)$ is sent bijectively onto the orbit of $p\in W(mN)$ under the $\mu_{mN}-$equivariant map $U(mN)\rightarrow W(mN)$, so their stabilizers must coincide.
\end{proof}

\begin{proof}[Proof of Claim \ref{clm:subtoract}]
Consider the composition of maps of affine schemes
$$ T'_R\times_{R} (V\times_\Delta^{\fs} \Delta(mN)) \rightarrow  T_R\times_{R} \Spec R[P(mN)] \rightarrow \Spec R[P(mN)] \virgola $$
corresponding to the following composition of maps of rings
\begin{align*}
R[P(mN)] & \rightarrow & R[P^{\grp}(mN)] \otimes_{R} R[P(mN)]  &\rightarrow & R[P(mN)^{\grp}/\langle u \rangle] \otimes_{R} R[P(mN)]/(\chi^u-\pi) \\
\chi^x & \mapsto & \chi^x \otimes \chi^x & \mapsto  & \chi^{\overline{x}} \otimes \overline{\chi^{x}} \virgola
\end{align*}
where $\overline{x}\in P^{\grp}/\langle u \rangle$ denotes the projection of $x\in P(mN)$ and $\overline{\chi^{x}}$ denotes the the projection of $\chi^x\in R[P(mN)]$ into $R[P(mN)]/(\chi^u-\pi)$.
Since $\overline{u}=0$, one sees that $\chi^u-\pi$ is sent to $1\otimes \overline{\chi^u}- 1\otimes \pi=0$, thus the map factors throug
\begin{align*}
R[P(mN)]/(\chi^{u}-\pi) & \rightarrow  R[P(mN)^{\grp}/\langle u \rangle] \otimes_{R} R[P(mN)]/(\chi^u-\pi) \\
\overline{\chi^x} & \mapsto  \chi^{\overline{x}} \otimes \overline{\chi^{x}} \virgola
\end{align*}
giving an action $T'_R\times_{\Delta} V\times_\Delta^{\fs} \Delta(mN) \rightarrow V\times_\Delta^{\fs} \Delta(mN)$. 

We conclude by showing that $V\times_\Delta^{\fs} \Delta(mN)$ admit a dense orbit with respect to the action of $T'_K$. 

Let $y\in (V\times_\Delta^{\fs} \Delta(mN) )\cap T(mN)_K$ be a closed point. Since $\codim_{T(mN)_K}T'_K=1$, then $T'_K \cdot y \subseteq T(mN)_K\cdot y= T(mN)_K$ is a closed subscheme of codimension at most 1. Density of $T'_K\cdot y$ follows from the fact that $(V\times_\Delta^{\fs} \Delta(mN) )\cap T(mN)_K$ is irreducible and has codimension 1 in $T(mN)_K$.
\end{proof}

\section{The monodromy conjecture}
\label{sect:intrmonconj}

\subsection{Motivic integration}

\subsubsection{} Let us fix a complete DVR $R$ and let $K$ denote its fraction field, while $k$ its residue field, which we assume to be algebraically closed. 


\subsubsection{} Let $Y\rightarrow \Delta^\ast(m)$ be a smooth Calabi Yau variety and let $\omega$ be a volume form on $Y$. Fix a weak Néron model $\mathfrak{Y}\rightarrow \Delta(m)$ of $Y$. For a connected component $C\in \pi_0(\mathfrak{Y}_0)$ let $\ord_C(\omega)$ be the order of $\omega$, considered as a meromorphic function, on the generic point of $C$.

\subsubsection{} It follows from a result of Loeser and Sebag (see \cite[Proposition 4.3.1]{LS}), that the following definition does not depend on the choice of the weak Néron model of $Y$:

\begin{definition}(Motivic integral)
With the same notation introduced in this paragraph, we call motivic integral of the volume form $\omega$ on $Y$ the element of $\mathcal{M}_k$ given by the following sum:
$$ \int_Y \omega d\mu = \sum_{C\in \pi_0 \mathfrak{Y}_0} [C]\L^{-\ord_C(\omega)} \punto $$
\end{definition}

\subsubsection{} Now fix a Calabi Yau variety $X\rightarrow \Delta^\ast$, together with a volume form $\omega\in \omega_X(X)$. 

For every positive integer $m$, define $X(m)\coloneqq X\times_{\Delta^\ast} \Delta^\ast(m)$. Since the map $\Delta^\ast(m)\rightarrow \Delta$ is an étale map, the basechange map $X(m)\rightarrow X$ is étale as well. The pull-back of $\omega$ through that map is thus a volume form on $X(m)$, which we denote by $\omega(m)$. Using this construction, a formal series with coefficients in $\mathcal{M}_k$ is defined:

\begin{definition}[Motivic Zeta Function]
\label{def:motzeta}
Keep the notation of this paragraph. The \emph{Motivic Zeta Function} of $X$ with respect to the volume form $\omega$ is the formal series
$$ Z_{X,\omega}(T)\coloneqq \left( \sum_{\stackrel{m\geq 1}{\chara k \nmid m}} \int_{X(m)} \omega(m) d\mu \right) T^m \punto $$
\end{definition}

\subsubsection{} The motivic integral and the Zeta function have an equivariant counterpart in $\mathcal{M}_k^G[[T]]$, provided that the volume form of the Calabi-Yau variety is chosen to be $G-$equivariant.

\subsection{Rational functions in $\mathcal{M}_k[[T]]$}

\subsubsection{} The notions discussed in this paragraph are introduced, for instance, in \cite{NX}.

\begin{definition}
Let $F\in \mathcal{M}_k[[T]]$, we say that $F$ is rational if there is a finite set $S\subseteq \N\times \N_+$ such that $\displaystyle F\in \mathcal{M}_k\left[ T, \frac1{1-\L^{-a}T^b} \colon (a,b)\in S \right]$. 

In such case, we say that $F$ has a pole of order at most $n\in \N$ in $q\in \Q$ if there exist a finite set $S'\in \N\times \N_+$ such that $\displaystyle \frac{a}b=q (a,b)\notin S'$ and a positive integer $N$ such that
$$ (1-\L^{-qN}T^N)^n F \in \mathcal{M}_k\left[ T, \frac1{1-\L^{-a}T^b} \colon (a,b)\in S' \right] \punto $$
We say that $F$ has a pole of order $n\geq 1$ in $q\in \Q$ if $F$ has a pole of order at most $n$, but not a pole of order at most $n-1$.
\end{definition}


%
%

\subsubsection{} The definition can be simplified, provided that we work on a ring $\mathcal{R}$ endowed with a map $\mathcal{M}_k\rightarrow \mathcal{R}$ such that all the elements of the form $\L^r-1$, with $r \in \N\backslash \{0\}$, are invertible. The minimal such choice for $\mathcal{R}$ is, clearly, the localization in $\mathcal{M}_k$ with respect to that set of elements, i.e. $\displaystyle \mathcal{M}_k\left[ (\L^r-1)^{-1} \colon 0<r\in \N \right]$. Another natural choice for $\mathcal{R}$ is the completed Grothendieck ring of varietes: $\widehat{\mathcal{M}}_k$, where the inverse of $1-\L^r$ is $1+\L^r+\L^{2r}+\cdots$. The following lemma clarifies why in this case it is easier to define a pole:

\begin{lemma}
\label{lem:seppol}
Let $\mathcal{R}$ be a ring as above and $F\in \mathcal{R}[[T]]$ any rational function. Then $\exists N>0$ a positive integer and a finite set $S\subseteq \Q$ such that:
\begin{equation}
\label{eqn:zetaK3form}
F(T)= p(T) + \sum_{q\in S} \frac{f_q(T)}{(1-\L^{-qN}T^N)^{a_q}} \virgola
\end{equation}
for some polynomials $p, f_q\in \mathcal{R}[T]$ and positive integers $a_q$.
\end{lemma}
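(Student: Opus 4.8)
The plan is to run the classical partial‑fractions argument over the polynomial ring $\mathcal{R}[T]$, the one non‑formal ingredient being the hypothesis that every $\L^r-1$ (with $r\in\N_+$) is invertible in $\mathcal{R}$: this is exactly what forces the relevant denominators to be pairwise coprime. First I would unwind the definition of rationality: by hypothesis $F$ lies in $\mathcal{R}\bigl[T,(1-\L^{-a}T^b)^{-1}:(a,b)\in S_0\bigr]$ for some finite $S_0\subseteq\N\times\N_+$. Each $1-\L^{-a}T^b$ has invertible constant term, so this subring of $\mathcal{R}[[T]]$ is simply the localisation of $\mathcal{R}[T]$ at the multiplicative set generated by the elements $1-\L^{-a}T^b$; clearing denominators produces an identity $F=g(T)/\prod_{(a,b)\in S_0}(1-\L^{-a}T^b)^{n_{a,b}}$ in $\mathcal{R}[[T]]$ with $g\in\mathcal{R}[T]$ and $n_{a,b}\in\N$.

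Next I would normalise the shape of the denominator. Let $N$ be a common multiple of all the integers $b$ occurring in $S_0$, and for a pair $(a,b)\in S_0$ put $q\coloneqq a/b\in\Q$ and $c\coloneqq N/b\in\N_+$, so that $qN=ac\in\Z$. Applying the identity $1-Y^c=(1-Y)(1+Y+\dots+Y^{c-1})$ with $Y=\L^{-a}T^b$ gives $1-\L^{-qN}T^N=(1-\L^{-a}T^b)\,u_{a,b}(T)$ with $u_{a,b}=\sum_{i=0}^{c-1}\L^{-ai}T^{bi}\in\mathcal{R}[T]$. Multiplying numerator and denominator of $F$ by suitable powers of the $u_{a,b}$ and then collecting the factors according to the value of $q$, I obtain
\[
F=\frac{G(T)}{\prod_{q\in S}\bigl(1-\L^{-qN}T^N\bigr)^{a_q}},\qquad S\coloneqq\{a/b:(a,b)\in S_0\}\subseteq\Q\ \text{finite},
\]
with $a_q\in\N_+$ and $G\in\mathcal{R}[T]$.

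The crux is the comaximality in $\mathcal{R}[T]$ of the polynomials $D_q\coloneqq 1-\L^{-qN}T^N$ for distinct $q\in S$. Given $q\neq q'$ in $S$, both $qN$ and $q'N$ are integers, so (say $q'>q$) one has $\L^{-qN}-\L^{-q'N}=\L^{-q'N}\bigl(\L^{(q'-q)N}-1\bigr)$, the product of a unit of $\mathcal{R}$ with an element of the form $\L^{r}-1$, $r\in\N_+$, hence a unit of $\mathcal{R}$ by the standing hypothesis. Since $D_{q'}-D_q=(\L^{-qN}-\L^{-q'N})T^N$, the ideal $(D_q,D_{q'})$ of $\mathcal{R}[T]$ contains $T^N$, hence also $\L^{-qN}T^N+D_q=1$; thus $D_q$ and $D_{q'}$ are comaximal, and so are the powers $D_q^{a_q}$, comaximality being preserved under passing to powers and to products of ideals. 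Since each $D_q^{a_q}$ has a unit leading coefficient, Euclidean division by it is available in $\mathcal{R}[T]$, so the standard Bézout/Chinese‑remainder manipulation for finitely many pairwise comaximal elements rewrites $G/\prod_q D_q^{a_q}$ as $p(T)+\sum_{q\in S}f_q(T)/D_q^{a_q}$ with $p,f_q\in\mathcal{R}[T]$ — exactly \eqref{eqn:zetaK3form} for the $N$ just constructed.

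The only genuine obstacle is this comaximality step, and it is where passing from $\mathcal{M}_k$ to $\mathcal{R}$ is unavoidable: over $\mathcal{M}_k$ itself two distinct factors $1-\L^{-a}T^b$ need not be coprime, and it is precisely the invertibility of the elements $\L^r-1$ that repairs this. Everything else is bookkeeping — choosing $N$, clearing and re‑grouping denominators, and the usual reduction of the numerators modulo the $D_q^{a_q}$.
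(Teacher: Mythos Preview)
Your argument is correct and follows the same partial-fractions strategy as the paper: the paper writes out the explicit two-term identity
\[
\frac{1}{(1-\L^\mu T^N)(1-\L^\nu T^N)}=\frac{1}{1-\L^{\mu-\nu}}\left(\frac{1}{1-\L^\nu T^N}-\frac{\L^{\mu-\nu}}{1-\L^\mu T^N}\right)
\]
and inducts on the number of distinct poles, which is exactly the concrete Bézout relation underlying your comaximality/CRT step. Your write-up is a bit more careful than the paper about the preliminary normalisation to a common exponent $N$, which the paper leaves implicit.
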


\begin{proof}
We begin by noticing that, given $\mu > \nu$ positive integers, one has that
$$ \frac1{(1-\L^\mu T^N)(1-\L^\nu T^N)}=\frac{1}{1-\L^{\mu-\nu}}\left(\frac{1}{1-\L^\nu T^N}-\frac{\L^{\mu-\nu}}{1-\L^\mu T^N}\right) \virgola $$
where $N\in \N$. 

By repeatedly applying this step, one obtains the following identity
$$ \frac{1}{(1-\L^\mu T^N)^a(1-\L^{\nu}T^N)^b}= \frac{p(T^N)}{(1-\L^\mu T^N)^a}+\frac{q(T^N)}{(1-\L^\nu T^N)^b} \virgola $$
for positive integers $a,b$ and polynomials $\displaystyle p,q\in \Z\left[\L,\frac{1}{1-\L^{\mu-\nu}}, t \right]$ such that $\deg_t(p)<a$, $\deg_t(q)<b$. In particular any rational function in $\mathcal{R}[[T]]$ with two poles is the sum of two functions with a single pole. We conclude the proof by induction on the number of poles of $F$.
\end{proof}

\begin{definition}
If $F$ is written as in \eqref{eqn:zetaK3form} and $q\in S$, then we say that $F$ has a pole of order at most $a_q$ in $q$. 

If, moreover, there is no integer $N'\in \N_+$ such that $f_q \in (1-\L^{-qN'}T^{N'})\mathcal{R}[T]$, then $F$ has a pole of order exactly $a_q$ in $q$. 
\end{definition}

\begin{remark}
\label{rem:strongtoweak}
Let $F\in \mathcal{M}_k[[T]]$ and let $\widetilde{F}\in \mathcal{R}[[T]]$ be the image of $F$ under the completion map (or localisation map). 

Because of Lemma \ref{lem:seppol}, any pole $q$ of $\widetilde{F}$ of order $a\in \N$ is also a pole of $F$ of order greater or equal than $a$.
\end{remark}

\subsection{Statement of the conjecture}

\subsubsection{}  We conclude this section by explaining the main problem we are going to face. 
Let $\mathcal{R}$ be one of the three rings $\mathcal{M}_k, \mathcal{M}_k\left[ (\L^r-1)^{-1} \colon 0<r\in \N \right]$ or $\widehat{\mathcal{M}}_k$. We give a statement of the monodromy conjecture that depends on how the ring of coefficients for the zeta function is interpreted:

\begin{conjecture}[Monodromy conjecture in $\mathcal{R}$]
\label{conj:monconjboth}
Let $X\rightarrow \Spec K$ be a Calabi-Yau variety and let $\omega$ be a volume form on it. Let $q\in\Q$ be a pole of $Z_{X,\omega}(T) \in \mathcal{R}[[T]]$,
then $e^{2\pi i q}$ is an eigenvalue for the monodromy operator associated to $\Gal(\overline{K}|K)$.
\end{conjecture}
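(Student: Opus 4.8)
The final statement is the Monodromy Conjecture itself, which is open in general; what follows is the line of attack one would pursue, which is precisely the one that succeeds in the cases presently known --- abelian varieties (\cite{HN_Ab}), K3 surfaces admitting a Crauder--Morrison model (\cite{Jas}), Kummer K3 surfaces (\cite{Ove}) --- and which, through Theorem \ref{thm:monconjHn} and Proposition \ref{prop:monconjprod}, also transfers to their Hilbert schemes of points and to products.

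The plan is to make both sides of the putative correspondence explicit in terms of one combinatorial gadget: a fixed regular \emph{sncd} model $\mathfrak{X}\rightarrow \Delta$ of $X$, with special fibre $\mathfrak{X}_0=\sum_i N_i E_i$, together with the integer $\nu_i$ recording the order of vanishing of $\omega$ along $E_i$ (corrected by the discrepancy, as in \S \ref{par:centrfib}). First, using the weak Néron models extracted from the equivariant semistable reductions $\mathfrak{X}(m)_{\sm}$ of \S \ref{sect:equivsemsred} and the motivic integration set-up of \S \ref{sect:intrmonconj}, one computes $Z_{X,\omega}(T)$ as a rational function whose only candidate poles are the rationals $q_i=-\nu_i/N_i$ attached to the components with $N_i>1$; in particular $e^{2\pi i q_i}$ is an $N_i$-th root of unity. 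Second, one expresses the monodromy eigenvalues of $\sigma^\ast$ on $H^\ast(X_{\overline{K}},\Q_l)$ through the same data: after the base-change $K(m)/K$ with $m$ divisible by every $N_i$, $X(m)$ has semistable reduction, the weight (Rapoport--Zink--Steenbrink) spectral sequence computes the graded pieces of the monodromy filtration from the cohomology of the strata of $\mathfrak{X}(m)_0$, and descending along $\mu_m$ --- the same bookkeeping carried out in Lemma \ref{lem:ordcond} and Lemma \ref{lem:Gmodule} --- shows that $E_i$ contributes the eigenvalue $e^{-2\pi i \nu_i/N_i}$ precisely when $N_i\nmid\nu_i$. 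This is the arithmetic incarnation of the A'Campo-type formula for the monodromy zeta function.

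With both sides on the table the argument collapses to a single implication: whenever $q=-\nu_i/N_i$ is \emph{genuinely} a pole of $Z_{X,\omega}(T)$ --- i.e. the contribution of $E_i$ together with those of the higher-codimension strata of $\mathfrak{X}_0$ meeting it does not cancel in $\mathcal{M}_k$ --- the corresponding class survives in $H^\ast(X_{\overline{K}},\Q_l)$, so that $e^{2\pi i q}$ is a monodromy eigenvalue. Equivalently, one has to exclude the \emph{fake poles}: faces of the relevant Newton polyhedron whose leading residue is annihilated by contributions of adjacent faces. This is exactly where the conjecture is hard and where no general mechanism is known; controlling such cancellations requires extra geometric input on how the high-multiplicity components sit inside $\mathfrak{X}_0$, and the way through it in the known cases is to feed in the available structure theory of the degenerations (Kulikov / Crauder--Morrison models for K3 surfaces, the Néron model for abelian varieties), after which the matching of poles with eigenvalues becomes a finite check. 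So the decisive obstacle --- and the reason the statement is phrased as a conjecture --- is the absence of a general non-cancellation principle for the motivic zeta function.
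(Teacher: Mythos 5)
You have correctly recognized that the statement is Conjecture \ref{conj:monconjboth} itself, which the paper does not prove and which remains open in general; the paper only states it (in the three flavours corresponding to the choice of $\mathcal{R}$) and then proves conditional transfer results such as Theorem \ref{thm:monconjHn} and Proposition \ref{prop:monconjprod}. Your survey of the standard line of attack --- expressing both the candidate poles $-\nu_i/N_i$ and the monodromy eigenvalues through an sncd model, and isolating the non-cancellation problem for ``fake poles'' as the genuine obstruction --- is accurate and consistent with how the known cases (\cite{HN_Ab}, \cite{Jas}, \cite{Ove}) are handled, but, as you yourself state, it is not a proof, and no proof is expected here.
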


\subsubsection{}  The monodromy conjecture in $\mathcal{M}_k$ implies the monodromy conjecture in \\$\mathcal{M}_k\left[ (\L^r-1)^{-1} \colon 0<r\in \N \right]$ by the Remark \ref{rem:strongtoweak}. In turn, the monodromy conjecture in \\$\mathcal{M}_k\left[ (\L^r-1)^{-1} \colon 0<r\in \N \right]$ implies the version in $\widehat{\mathcal{M}}_k$.



\section{Formal series in the motivic rings}
\label{sect:forser}

In this section, the symbol $\mathcal{R}$ shall denote one of the rings $\mathcal{M}_k, \mathcal{M}_k\left[ (\L^r-1)^{-1} \colon 0<r\in \N \right]$ or $\widehat{\mathcal{M}_k}$, unless differently specified.

We will discuss some properties of power series with coefficients in $\mathcal{R}$, then we will describe some operations that shall be useful for computing the Motivic Zeta Function in our case.

\subsection{Quotient by a group action}

\subsubsection{} Let us fix a finite group $G$, let $N \trianglelefteq G$ and let $H=G/N$ be its quotient. Consider the equivariant versions of the ring $\mathcal{R}$, which we call $\mathcal{R}^G$ and $\mathcal{R}^H$, as in \S \ref{subsect:equivring}. For an arbitrary variety $X$ endowed with an action of $G$, we consider the quotient $X/N$ which is again an algebraic space endowed with an action of the group $H$, namely the quotient action.

\subsubsection{} The \emph{example of Hironaka} shows that $X/N$ is not necessarily a scheme. In any case, there exists an open affine subscheme $\Spec A\subseteq X$ which is invariant under the action of $G$, so that $\Spec A^N\subseteq X/N$ is a scheme. We may, thus, repeat the argument for the closed $G-$invariant subscheme $X\backslash \Spec A$ and, by Noetherian induction, we stratify $X$ as a union of $G-$schemes whose quotients with respect to the action of $N$ are $H-$schemes; moreover a $G-$invariant stratification of each stratum induces an $H-$invariant stratification of its quotient. Thus there is a well defined map of groups $\mathcal{R}^G\rightarrow \mathcal{R}^H$ by $[X]\mapsto [X/N]$; this map does not preserve the products.

\subsubsection{} In the following definition we extend the map above to a map $\mathcal{R}^G[[T]]\rightarrow \mathcal{R}^H[[T]]$ and in the subsequent proposition we show that rationality of any power series is well behaved under this map.

\begin{definition}
If $\displaystyle F=\sum_{n} A_nT^n \in \mathcal{R}^G[[T]]$, we define the \emph{series of the quotients} associated to $F$ as
$$ (F/N)(T)= \sum_n (A_n/N) T^n \in \mathcal{R}^H[[T]] \punto $$
\end{definition}

\begin{proposition}
\label{prop:polquot}
For $f,g\in \mathcal{R}^G[T]$, with $g$ being of the form $\prod_{j\in J}(1-\L^{a_j}T^{b_j})$, let $F$ be the rational function $\displaystyle F(T)\coloneqq\frac{f(T)}{g(T)}$. Then $\displaystyle F/N= \frac{f/N}{g}$. 

In particular all the poles of $F/N$ belong to the set of poles of $F$.
\end{proposition}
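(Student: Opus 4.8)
The plan is to exploit the explicit description of $F/N$ as a formal power series and reduce the claim to a statement about finitely many terms, using the fact that the map $[X]\mapsto [X/N]$ from $\mathcal{R}^G$ to $\mathcal{R}^H$ is additive (hence $\mathcal{R}$-linear once we regard $\mathcal{R}$ as a subring via the trivial-action embedding), even though it is not multiplicative. The key observation is that $g(T)=\prod_{j\in J}(1-\L^{a_j}T^{b_j})$ has coefficients with \emph{trivial} $G$-action, so for any $h\in\mathcal{R}^G[T]$ the product $g(T)\cdot h(T)$, read off coefficient by coefficient, is an $\mathcal{R}$-linear combination (with coefficients built from $\L^{a_j}$ and shifts in $T$) of the coefficients of $h$. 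Consequently the quotient operation commutes with multiplication by $g$: $(g\cdot h)/N = g\cdot (h/N)$ for every $h\in\mathcal{R}^G[[T]]$.

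With that commutation in hand, I would argue as follows. Write $F = f/g$ in $\mathcal{R}^G[[T]]$, i.e. $F$ is the unique power series with $g\cdot F = f$ in $\mathcal{R}^G[[T]]$ (this makes sense because the constant term of $g$ is $1$, so $g$ is a unit in $\mathcal{R}^G[[T]]$). Apply the quotient-series operation $(-)/N$ to both sides: by the commutation property, $g\cdot (F/N) = f/N$ in $\mathcal{R}^H[[T]]$. Since $g$ is again a unit in $\mathcal{R}^H[[T]]$ (same reason), this forces $F/N = (f/N)/g$, which is exactly the asserted identity. The statement about poles is then immediate: $f/N$ is a polynomial in $\mathcal{R}^H[T]$ of degree at most $\deg f$, and $(f/N)/g$ is visibly a rational function whose denominator divides $g$, so after cancellation every pole of $F/N$ is a rational number $a_j/b_j$ for some $j\in J$ with $1-\L^{a_j}T^{b_j}$ actually appearing — in particular it lies among the poles of $F$, which are exactly the $a_j/b_j$ surviving cancellation in $f/g$. (One should phrase this last step compatibly with the definition of ``pole'' in Lemma~\ref{lem:seppol} / the preceding definition: after partial-fraction decomposition the poles of $F/N$ form a subset of $\{a_j/b_j : j\in J\}$, and the poles of $F$ are also read off from the same set, possibly with some removed by cancellation, but no new ones can be created for $F/N$.)

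The one point requiring genuine care — and the step I expect to be the main obstacle — is justifying the coefficient-wise commutation $(g\cdot h)/N = g\cdot(h/N)$ rigorously in the Grothendieck-ring setting. This is not formal: the map $\mathcal{R}^G\to\mathcal{R}^H$ is only additive, so I must check that multiplying a class $[X]$ (with $G$-action) by $\L^a$ — i.e. replacing $X$ by $X\times\mathbb{A}^a$ with trivial action on the second factor — commutes with passing to the $N$-quotient, which amounts to the identity $(X\times\mathbb{A}^a)/N \cong (X/N)\times\mathbb{A}^a$ as $H$-schemes (or algebraic spaces, stratified as in \S\ref{sect:forser}); this holds because $N$ acts trivially on the $\mathbb{A}^a$ factor, so the quotient is taken fibrewise. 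Combined with additivity and the $T$-shift being obviously compatible, this gives the $\mathcal{R}$-linearity of $(-)/N$ needed above. Once this lemma-level fact is recorded, the rest of the proof is the short formal manipulation described in the previous paragraph.
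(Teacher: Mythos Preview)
Your approach is correct and essentially the same as the paper's. The paper reduces to $f=\alpha$ constant, expands $1/g(T)=\sum_n A_n T^n$ with each $A_n\in\mathcal{R}$ carrying trivial $G$-action, and uses the single identity $(\alpha A_n)/N=(\alpha/N)A_n$ to conclude; your argument is the dual version, multiplying through by $g$ instead of $1/g$, and your justification $(X\times\A^a)/N\cong(X/N)\times\A^a$ is exactly the geometric content behind that identity.
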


\begin{proof}
It suffices to show the statement in the case when $f=\alpha\in \mathcal{R}^G$ is a constant.
Let $\displaystyle  \frac{1}{g(T)}= \sum_n A_nT^n$, where $A_n\in \mathcal{R}$ has a trivial $G-$action. Then
$$ \left. \left(\frac{\alpha}{g(T)}\right)\right /N=\sum_n (\alpha A_n)/N T^n = \sum_n \alpha/N A_nT^n= \frac{\alpha/N}{g(T)} \punto $$
\end{proof}

\subsection{Power structures}

\subsubsection{}  We need to define a map on the Grothendieck rings which extends the symmetric product of a variety, allowing us to talk about the symmetric product of a "difference of varieties"; in order to do so, we need to use a power structure on $K_0(\Var_k)$, thus we begin by recalling what a power structure is, as introduced in \cite{GLM}:

\begin{definition}
Let $A$ be a ring. A power structure on $A$ is a map
\begin{align*}
(1+tA[[t]])\times A & \rightarrow 1+ tA[[t]] \\
(F(t), X) & \mapsto F(t)^X
\end{align*}
satisfying the following conditions for all $F,G\in 1+tA[[t]]$ and $X,Y\in A$:
\begin{itemize}
\item $F(t)^0=1$;
\item $F(t)^1=F(t)$;
\item $(F(t)G(t))^X=F(t)^X\cdot G(t)^X$;
\item $F(t)^{X+Y}={\left(F(t)\right)}^X{\left(F(t)\right)}^Y$;
\item $F(t)^{XY}={\left(F(t)^X\right)}^Y$;
\item $(1+t)^X\in1+Xt+t^2 A[[t]]$;
\item $F(t)^X|_{t\rightarrow t^n}=F(t^n)^X$.
\end{itemize}

In fact, the last two properties are not part of the original definition, but other authors include them in their definition.
\end{definition}


\subsubsection{} In the rest of the section, for $F=\sum_n F_nT^n \in K_0(\Var_k)[[T]]$, with $F_0=1$, and for $X\in K_0(\Var_k)$ we denote by $F(T)^X$ the power structure introduced by Gusein-Zade, Luengo and Melle-Hernandez in \cite{GLM}.


\subsection{Symmetic powers}
\label{sect:studysym}

\subsubsection{} Keep the notation introduced in the previous paragraph. Let us begin with the following definition:

\begin{definition}[Symmetric power in GRV]
Let $\alpha \in K_0(\Var_k)$ and let $r\in \N$. The \emph{$r-$th symmetric power of $\alpha$} is the element $\Sym^r(\alpha) \in K_0(\Var_k)$ defined as
$$ \Sym^r(\alpha) \coloneqq [t^r] (1-t)^{-\alpha} \punto $$
\end{definition}

\subsubsection{} In particular, if $\alpha=[U]$, we have that $\Sym^r([U])=[\Sym^r(U)]$; in this sense $\Sym^{\bullet}$ exends the notion of symmetric power of an algebraic variety. In general, for $\alpha= [U]-[V]$ one gets an explicit formula by analyzing the coefficients of
$$ (1-t)^{[V]-[U]}=(1-t)^{[V]}\cdot(1+t+t^2+\cdots)^{[U]} \punto $$

\begin{example}
In order to show how this computation can be handled, we compute explicitly the coefficient of $t^2$, that is, the expression for $\Sym^2([U]-[V])$.
We know, from \cite[Theorem 1]{GLM}, that
$$ (1+t+t^2+\cdots)^{[U]}=1+[U]t+[\Sym^2(U)]t^2+o(t^2) \virgola $$
thus we get
$$ (1-t)^{[V]}=(1+[V]t+[\Sym^2(V)]t^2+o(t^2))^{-1}= 1 - [V] t +([V^2]-[\Sym^2(V)])t^2 +o(t^2)   \punto $$
It follows that
$$ \Sym^2([U]-[V])= [\Sym^2(U)]-[\Sym^2(V)]+[V]^2-[U][V]  \punto $$
\end{example}

\subsubsection{} It is possible to extend the map $\Sym^r\colon  K_0(\Var_k)\rightarrow K_0(\Var_k)$ to a map $\Sym^r\colon \mathcal{M}_k \rightarrow \mathcal{M}_k$ by $\Sym^r{\L^{-s}\alpha}\coloneqq \L^{-rs}\Sym^r(\alpha)$. In order to ensure that this map is well defined, we only need to show that, for $\alpha \in K_0(\Var_k)$ and for $s\in \N$, we have $\L^{-rs}\Sym^r(\alpha)=\L^{-r(s+1)}\Sym^r(\L\alpha)$. 

Indeed, recalling that $\Sym^r(\A^n)\cong \A^{nr}$, we get
$$(1-t)^{\L\alpha}={\left((1-t)^{-\L}\right)}^{-\alpha}= (1+\L t+\L^2t^2+\cdots)^{-\alpha}=(1-\L t)^{\alpha} \virgola $$
which implies that $\forall \alpha\in K_0(\Var_k)$, $\Sym^r(\L\alpha)=\L^r\Sym^r(\alpha)$.

\subsubsection{} The map $\Sym^r$ can also be defined at the level of $\widehat{\mathcal{M}_k}$; indeed, for $\alpha, \beta\in K_0(\Var_k)$, we have that:
$$ \forall p\in \N, \quad (1-t)^{\alpha+\L^p\beta}-(1-t^\alpha)=(1-t)^\alpha\cdot\left((1-\L^p t)^{\beta}-1\right)\in \L^p  K_0(\Var_k)[[t]] \virgola $$
thus $\Sym^r(\alpha+\L^p\beta) \equiv \Sym^r(\alpha) \pmod{\L^p}$.

\subsubsection{} For the sake of completeness, we shall also define a version of $\Sym^r$ defined over \\ $\mathcal{M}_k\left[(\L^n-1)^{-1}\colon 0<n\in \N \right]$. Since $\Sym^1$ is already defined as the identity map we can proceed inductively on $r$. Let us assume that all the maps $\Sym^i$ are defined for $1\leq i \leq r-1$.
Let us first check that for $\alpha \in  \mathcal{M}_k $, the value of 
$$ \Sym^r\left( \frac{\alpha}1 \right)\coloneqq \frac{\Sym^r(\alpha)}1 \in \mathcal{M}_k\left[(\L^n-1)^{-1}\colon 0<n\in \N \right] $$
is well defined, i.e. that if $\displaystyle \frac{\alpha}1=\frac{\beta}1$, then $\displaystyle \frac{\Sym^r(\alpha)}1=\frac{\Sym^r(\beta)}1$. If $\gamma\in \mathcal{M}_k$  is such that $(\L^n-1)\gamma=0$ for some $n\in \N$, then 
$$ \Sym^r(\L^n\gamma)= \Sym^r(\gamma+ (\L^n-1)\gamma)=\sum_{j=0}^r \Sym^{j}(\gamma)\Sym^{r-j}(0)= \Sym^r(\gamma) \virgola $$
thus $(\L^{nr}-1)\Sym^r(\gamma)=0$. By an inductive argument one proves that if $\displaystyle \frac{\gamma}1=0$, then also $\displaystyle \frac{\Sym^r(\gamma)}1=0$. It follows that, whenever $\displaystyle \frac{\alpha}1=\frac{\beta}1$, then
$$ \frac{\Sym^r(\beta)}1=\frac{\Sym^r(\alpha+(\beta-\alpha))}1=\sum_{j=1}^r \frac{\Sym^j(\alpha)}1\frac{\Sym^{r-j}(\beta-\alpha)}1=\frac{\Sym^r(\alpha)}1 \punto $$
Then we define, recursively:
$$ \Sym^r\left(\frac{\alpha}{\L^n-1}\right)\coloneqq (\L^{nr}-1)^{-1}\sum_{i=1}^r \Sym^i(\alpha)\Sym^{r-i}\left( \frac{\alpha}{\L^n-1}\right) \punto $$




\begin{example}
We show how to compute this map in a specific case. For $\alpha=\displaystyle \frac{[U]}{1-\L^n}$, we have that $\Sym^2(\alpha)=\displaystyle \frac{[\Sym^2(U)]}{1-\L^{2n}} +\frac{\L^n\cdot [U]^2}{(1-\L^n)(1-\L^{2n})}$.
\end{example}


\subsubsection{} For a power series $F(T)=\sum A_nT^n\in \mathcal{R}[[T]]$, let us consider the power series obtained by plugging each coefficient of $F$ into the above mentioned maps $\Sym^r$:
$$ \Sym^r(F)(T) \coloneqq \sum \Sym^r(A_n)T^n \punto $$
These maps have very interesting properties when the function $F$ is rational, indeed in this case we are able to control the poles of $\Sym^r$ thanks to the upcoming results:

\begin{lemma}
\label{lem:sympol}
Let $\displaystyle F=\frac{\alpha T^h}{(1-\L^{-q N}T^N)^e} \in \mathcal{R}[[T]]$. Then, for $r>0$, we have that $\Sym^r(F)$ has at most one pole of order (at most) $r(e-1)+1$ in $r q$.
\end{lemma}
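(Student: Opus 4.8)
The plan is to compute $\Sym^r(F)$ explicitly by exploiting the multiplicativity of the power structure together with the recursive definition of $\Sym^r$. Write $F = \alpha T^h \sum_{k\geq 0}\binom{e+k-1}{k}\L^{-qNk}T^{Nk}$, so that the coefficient of $T^{h+Nk}$ is $\binom{e+k-1}{k}\L^{-qNk}\alpha$ and all other coefficients vanish. Applying $\Sym^r$ coefficientwise and using that $\Sym^r(\L^{-s}\beta)=\L^{-rs}\Sym^r(\beta)$, I would show that $\Sym^r(F)$ is supported in degrees $\equiv rh \pmod N$, and its coefficient in degree $rh+Nm$ is a $\Z$-linear combination of the elements $\Sym^{j}(\alpha)$, $1\leq j\leq r$, with coefficients that are polynomial expressions in $\L^{-qN}$. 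The key point is to package this into the statement that $\Sym^r(F)$ is a $\Z[\L,\L^{-1}]$-linear combination of the basic rational functions $\dfrac{T^{rh}}{(1-\L^{-qNj}T^{Nj})^{s}}$ with $1\leq j \leq r$ and $s$ bounded appropriately.

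The heart of the argument is the bookkeeping on the exponents $s$ and the ``slopes''. First I would dispose of the case $\alpha=[U]$ a genuine class of a variety, using the formula of Gusein-Zade–Luengo–Melle-Hern\'andez for $\Sym^\bullet$ applied to a power series: $\Sym^\bullet\big((1-\L^{-qN}T^N)^{-e}\,T^h\big)$ is, up to the shift $T^h\mapsto T^{rh}$, governed by the power structure, and one has the classical identity expressing $\sum_r \Sym^r(G)\,u^r$ as a product $\prod_{k}(1-u\L^{-qNk}T^{Nk})^{-\binom{e+k-1}{k}}$-type expansion; collecting the coefficient of $u^r$ shows the only denominators that appear are $(1-\L^{-qNj}T^{Nj})$ for $j\leq r$, each to a power at most $r(e-1)+1$ — the bound $r(e-1)+1$ coming from the fact that a product of $r$ factors each contributing a pole of order $e$ at slope $q$, after clearing to a common denominator in the variable $\L^{-qN}T^N$, telescopes (via the partial-fraction identity in the proof of Lemma~\ref{lem:seppol}) down to a single pole at $q$ — equivalently at $rq$ after the substitution $T\mapsto T$, $N\mapsto N$ accounting for the degree shift — of order at most $r(e-1)+1$. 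For general $\alpha$, and in particular for $\alpha$ of the form $\beta/(\L^n-1)$ in the localized ring, I would invoke the recursive definition of $\Sym^r$ given just above in the excerpt: $\Sym^r(F)$ is built from $\Sym^i(\alpha)\,\Sym^{r-i}(\text{tail})$ with denominators $(\L^{nr}-1)^{-1}$, which are units in $\mathcal R$ and hence contribute no poles in $T$, so the pole structure in the variable $T$ is inherited entirely from the $[U]$-case computation.

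Concretely the steps are: (1) reduce to $h=0$ by the substitution $T\mapsto T$ and a shift, noting $\Sym^r(T^hG)=T^{rh}\Sym^r(G)$ which follows from the last axiom of a power structure; (2) establish the generating-function identity $\sum_{r\geq 0}\Sym^r\!\big((1-\L^{-qN}T^N)^{-e}\big)u^r = (1-u)^{-e\,[\cdot]}$-style expansion and extract that the $r$-th coefficient lies in $\mathcal R\big[T,(1-\L^{-qNj}T^{Nj})^{-1}:1\leq j\leq r\big]$; (3) run the partial-fraction reduction of Lemma~\ref{lem:seppol} in the single variable $\L^{-qN}T^N$ to collapse all these denominators to a single factor $(1-\L^{-qrN}T^{rN})$ at the cost of raising its exponent, and track that the exponent does not exceed $r(e-1)+1$; (4) finally handle arbitrary coefficient rings $\mathcal R$ and non-effective $\alpha$ via the recursion, observing the auxiliary denominators $(\L^{\ast}-1)$ are invertible in $\mathcal R$.

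The main obstacle I anticipate is step (3): pinning down the exact order bound $r(e-1)+1$ rather than a cruder bound like $re$. The naive clearing of denominators of $r$ factors each of order $e$ gives order $re$; the improvement to $r(e-1)+1$ requires noticing that after the telescoping/partial-fraction cancellations one ``saves'' $r-1$ in the exponent — intuitively because the diagonal term $(1-\L^{-qN}T^N)^{-re}$ is the only one at the critical slope and the $\binom{e+k-1}{k}$-weighted cross terms contribute strictly lower order at that slope. Making this precise will likely require either a careful induction on $r$ using the identity displayed in the proof of Lemma~\ref{lem:seppol}, or a direct residue computation at $T^N=\L^{qN}$ showing the pole order of $\Sym^r(F)$ there is at most $r(e-1)+1$. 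I would also need to double-check the edge case $e=1$, where the claim becomes that $\Sym^r(F)$ has a pole of order at most $1$ at $rq$, consistent with $\Sym^r$ of a function with a simple pole producing again only a simple pole — this should follow transparently from the generating-function identity since for $e=1$ the factor is $(1-\L^{-qN}T^N)^{-1}$ and its symmetric powers are governed by a geometric-type series.
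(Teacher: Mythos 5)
There is a genuine gap, and it starts before your step (3): you have misidentified the operation $\Sym^r$ on power series. In this paper $\Sym^r(F)$ is defined \emph{coefficientwise}, $\Sym^r\bigl(\sum_n A_nT^n\bigr)=\sum_n \Sym^r(A_n)T^n$; it is not the power-structure/plethystic exponential of $F$ as a series in $T$. Your steps (1)--(3) compute the latter. This is not a cosmetic difference: writing $F=\sum_{m\ge 0}A_mT^{Nm+h}$ with $A_m=\binom{m+e-1}{e-1}\alpha\,\L^{-mqN}$, the coefficientwise $\Sym^r(F)$ is supported in degrees $\equiv h\pmod N$ (not $rh$), and its coefficient in degree $Nm+h$ is $\Sym^r(A_m)$, which carries the factor $\L^{-rmqN}$. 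It is precisely this slope ($\L^{-rqN}$ per step of $T^N$) that produces the denominator $(1-\L^{-rqN}T^N)$ and hence a pole at $rqN/N=rq$. Your proposed denominators $(1-\L^{-qNj}T^{Nj})$ and $(1-\L^{-qrN}T^{rN})$ all encode a pole at $q$, and the parenthetical ``equivalently at $rq$ after the substitution\dots accounting for the degree shift'' is not a valid identification. So as written your argument proves a statement about a different function and lands the pole in the wrong place.

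Once the operation is read correctly, the order bound you flag as the main obstacle falls out without any partial-fraction collapse. One expands $\Sym^r(M\alpha)$ for the nonnegative integer $M=\binom{m+e-1}{e-1}$ via the power-structure axioms as a sum over partitions $\beta\dashv r$ of terms $\binom{M}{\beta_1,\dots,\beta_r}\,\alpha^{\beta_1}(\Sym^2\alpha)^{\beta_2}\cdots(\Sym^r\alpha)^{\beta_r}$, where the generalized multinomial coefficient is a polynomial in $M$ of degree $\sum_i\beta_i$, hence a polynomial in $m$ of degree $(e-1)\sum_i\beta_i\le (e-1)r$ (and is identically zero when $e=1$ and $\sum_i\beta_i>1$, which handles your edge case). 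Summing a degree-$D$ polynomial in $m$ against $\L^{-rmqN}T^{Nm}$ yields a combination of $(1-\L^{-rqN}T^N)^{-j}$ with $j\le D+1$, giving the bound $r(e-1)+1$ at the single value $rq$. Your step (4) (invertibility of the $(\L^{\ast}-1)$ factors for general coefficients) is fine in spirit, but the rest of the argument needs to be rebuilt on the coefficientwise definition.
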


\begin{proof}
Let $\displaystyle F= \sum_{m\geq 0} A_{m}T^{mN+h}$; then $\displaystyle A_m= {m+e-1 \choose e-1} \alpha \L^{-m q N}$. It follows that
$$ \Sym^r(A_m)= \sum_{\beta \dashv r} { { m+e-1 \choose e-1} \choose \beta_1,\dots, \beta_r } \alpha^{\beta_1}\cdot (\Sym^2\alpha)^{\beta_2}\cdots (\Sym^r \alpha)^{\beta_r} \L^{-rm q N} \puntovirgola $$
once $\beta$ is fixed, $\displaystyle { { m+e-1 \choose e-1} \choose \beta_1,\dots, \beta_r }$ is either $0$ $\forall m\in \Z$ (this happens only if $e=1$ and $\beta_1+\cdots+\beta_r>1$) or a polynomial in $m$ of degree $(e-1)(\beta_1+\dots+\beta_r)$. 

It follows (from the fact that $\displaystyle \frac1{(1-x)^n}=\sum_m p_n(m)x^m$, where $p_n\in \Q[t]$ is a polynomial such that $\deg p_n=n-1$) that
$$  \sum_{m\geq 0} { { m+e-1 \choose e-1} \choose \beta_1,\dots, \beta_r } \alpha^{\beta_1}\cdot (\Sym^2\alpha)^{\beta_2}\cdots (\Sym^r \alpha)^{\beta_r} \L^{-rm q N} T^{m N+h} $$
is a suitable combination with rational coefficients of 
$$ \left\{ \frac{\alpha^{\beta_1}\cdot (\Sym^2\alpha)^{\beta_2}\cdots (\Sym^r \alpha)^{\beta_r}T^h}{(1-\L^{-rq N}T^N)^j} \right\}_{j=0}^{(e-1)(\beta_1+\cdots+\beta_r)+1} \punto $$
Among all the partitions of $r$, the one which gives the highest possible order of the pole is the one maximizing $\beta_1+\cdots + \beta_r$, namely $\beta=(r,0,0,\dots)$, which gives a pole of order at most $r(e-1)+1$.
\end{proof}

\subsubsection{} For the rest of the section, we denote by $\mathcal{R}$ one of the two rings \\  $\mathcal{M}_k\left[(\L^n-1)^{-1}\colon 0<n\in \N \right]$ or $\widehat{\mathcal{M}_k}$: the proofs we shall present do not hold for functions with coefficients in $\mathcal{M}_k$.

\begin{proposition}
\label{prop:prodfun}
For $i=1,\dots, s$ let $F_i=\sum_{m\geq 0} A_m^{[i]}T^m\in \mathcal{R}[[T]]$ 
be rational functions and let $\pazocal{Q}_i\subseteq \Q$ be the set of poles of $F_i$. Let $F=\sum_{m\geq 0}A_m^{[1]}\cdots A_m^{[s]} T^m\in \mathcal{R}[[T]]$.
Then $F$ is also rational and its set of poles, $\pazocal{Q}$, is contained in $\pazocal{Q}_1+\pazocal{Q}_2+\cdots+\pazocal{Q}_s$. 

Moreover, for each $q\in \pazocal{Q}$, we have that
$$ \ord_q(F)\leq \max\left\{1 - s+ \sum_{i=1}^s \ord_{q_i}(F_i) \colon q_i\in \pazocal{Q}_i \mbox{ and } \sum q_i = q \right\} \punto $$
\end{proposition}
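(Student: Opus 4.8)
The plan is to reduce the multi-factor statement to the two-factor case $s=2$ by an easy induction on $s$, and then to prove the two-factor case by the classical technique of separating poles (Lemma \ref{lem:seppol}) followed by an explicit coefficient-wise multiplication of the "geometric" building blocks. First I would use Lemma \ref{lem:seppol} to write each $F_i$ as a polynomial plus a finite sum of partial fractions $f_{q_i}(T)/(1-\L^{-q_iN}T^N)^{a_{q_i}}$ over a common integer $N$ (take the l.c.m. of the individual $N$'s, and note that replacing $N$ by a multiple only changes the shape of the $f_q$'s, not the poles). Expanding the product $A_m^{[1]}A_m^{[2]}$ coefficient-by-coefficient, bilinearity reduces everything to the case where each $F_i$ is a single monomial-times-pole term $\alpha_i T^{h_i}/(1-\L^{-q_iN}T^N)^{e_i}$, exactly as in the proof of Lemma \ref{lem:sympol}.

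For such a pair, write $A^{[i]}_m = \binom{m+e_i-1}{e_i-1}\,\alpha_i\,\L^{-m q_i N}$ for $m$ in the appropriate residue class mod $N$ (and $0$ otherwise). Then the Hadamard product coefficient is
\[
A^{[1]}_m A^{[2]}_m = \binom{m+e_1-1}{e_1-1}\binom{m+e_2-1}{e_2-1}\,\alpha_1\alpha_2\,\L^{-m(q_1+q_2)N},
\]
and the binomial product is a polynomial in $m$ of degree $(e_1-1)+(e_2-1)$. Using the standard fact (recalled in the proof of Lemma \ref{lem:sympol}) that $1/(1-x)^n = \sum_m p_n(m)x^m$ with $\deg p_n = n-1$, and conversely that any polynomial in $m$ of degree $d$ produces, after multiplication by $x^m$ and summation, a $\mathcal{R}$-combination of $1/(1-x)^j$ for $j\le d+1$, we conclude that $\sum_m A^{[1]}_m A^{[2]}_m T^{m}$ is a $\mathcal{R}[T]$-combination of $T^{h}/(1-\L^{-(q_1+q_2)N}T^N)^j$ for $j$ up to $(e_1-1)+(e_2-1)+1 = e_1+e_2-1$. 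This simultaneously shows that the only pole of the Hadamard product of two single-pole terms is $q_1+q_2$, that its order is at most $e_1+e_2-1$, and — crucially — that this is exactly the bound $1-2+\ord_{q_1}(F_1)+\ord_{q_2}(F_2)$ claimed for $s=2$. Here is where I use that $\mathcal{R}$ is one of the two localized/completed rings: the coefficient manipulation requires inverting $\L^r-1$, since the interpolation $1/(1-x)^n=\sum p_n(m)x^m$ and the reverse step involve such denominators (with $x=\L^{-(q_1+q_2)N}T^N$ one needs $(\L^{rN}-1)^{-1}$-type factors to reexpress polynomials-in-$m$ as partial fractions).

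The induction on $s$ is then formal: given the claim for $s-1$, apply the two-factor case to $G := \sum_m (A^{[1]}_m\cdots A^{[s-1]}_m)T^m$ (rational, with poles in $\pazocal{Q}_1+\cdots+\pazocal{Q}_{s-1}$ and order bounds from the inductive hypothesis) and $F_s$, obtaining poles in $(\pazocal{Q}_1+\cdots+\pazocal{Q}_{s-1})+\pazocal{Q}_s$ and, at a pole $q=q'+q_s$ with $q'\in\pazocal{Q}_1+\cdots+\pazocal{Q}_{s-1}$,
\[
\ord_q(F)\le \ord_{q'}(G)+\ord_{q_s}(F_s)-1 \le \Bigl(1-(s-1)+\sum_{i=1}^{s-1}\ord_{q_i}(F_i)\Bigr)+\ord_{q_s}(F_s)-1,
\]
which telescopes to the asserted $1-s+\sum_{i=1}^s\ord_{q_i}(F_i)$; taking the maximum over all decompositions $q=\sum q_i$ gives the statement. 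The main obstacle I anticipate is purely bookkeeping rather than conceptual: after expanding each $F_i$ via partial fractions one is summing over many tuples $(q_1,\dots,q_s)$, and several distinct tuples may give the same sum $q$, so one must argue that the partial-fraction pieces attached to a given $q$ cannot conspire to raise the pole order beyond the maximum of the individual bounds — equivalently, that the order-exactness clause (the "no common factor $1-\L^{-qN'}T^{N'}$" part of the definition of pole order) is compatible with taking maxima. This is handled by the same separation-of-poles lemma applied once more at the end, grouping all contributions with denominator governed by the single value $q$; I do not expect any genuine difficulty, only care with the definitions.
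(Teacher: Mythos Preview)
Your proposal is correct and follows essentially the same route as the paper: use Lemma~\ref{lem:seppol} to reduce to the case where each $F_i$ is a single term $\alpha_i T^{h_i}/(1-\L^{-q_iN}T^N)^{e_i}$, then note that the product of the binomial coefficients $\prod_i\binom{m+e_i-1}{e_i-1}$ is a polynomial in $m$ of degree $\sum_i(e_i-1)$, so the Hadamard product has a single pole at $\sum q_i$ of order at most $1-s+\sum e_i$. The only difference is organizational: the paper computes the single-term case for all $s$ factors at once and then inducts on the number of summands in the partial-fraction expansions, whereas you first induct on $s$ to reduce to $s=2$; both arrangements work, and your identification of Lemma~\ref{lem:seppol} as the place where the hypothesis on $\mathcal{R}$ enters is the right one (the polynomial-in-$m$ rewriting itself needs only rational coefficients).
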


\begin{remark}
This statement holds, with the same proof, also if we consider functions $F_i\in \mathcal{M}_k[[T]]$, provided that each of them is sum of functions with a single pole.
\end{remark}

\begin{proof}
Let us assume for a moment that, $\forall i$, $F_i= \frac{\alpha_iT^{h_i}}{(1-\L^{-q_i N} T^N)^{e_i}}$, for some $\alpha_i\in \mathcal{R}$, $0\leq h_i< N$ integers, $q_i\in \Q$, $0<e_i\in \N$. In such case $A_{mN+h_i}^{[i]}= {m+e_i-1 \choose e_i-1}\alpha_i \L^{-mq_i N}$. Thus $F=0$ unless $h_i=h~\forall i$, while in this case we have that
$$ \prod_{i=1}^s A_{mN+h}^{[i]}=\left( \prod_{i=1}^s  { m+e_i-1 \choose e_i-1} \alpha_i \right) \L^{-mqN} \virgola $$
where $q=q_1+\cdots +q_s$.
The degree of $\prod_{i=1}^s { m+e_i-1 \choose e_i-1}$, seen as a polynomial in $m$, is $\sum e_i - s$, thus we get the desired result in this case. 

For the general case it is enough to consider $F_1=F_1'+F_1''$, where $F_1'= \sum B_mT^m$ and $F_1''=\sum C_mT^m$; then, setting $F'\coloneqq \sum B_m A_m^{[2]}\cdots A_m^{[s]}T^m$ and $F''\coloneqq \sum C_m A_m^{[2]}\cdots A_m^{[s]}T^m$, we have that $F=F'+F''$ and if our statement holds for both $F'$ and $F''$ then it holds also for $F$, in particular writing all the $F_i$ as in Equation \eqref{eqn:zetaK3form}, the proposition follows by an induction on the number of their summands.
\end{proof}

\begin{lemma}
\label{lem:polestot}
Let $F\in \mathcal{R}[[T]]$ be a rational function whose set of poles is $\pazocal{Q}\subseteq \Q$, or let $F\in \mathcal{M}_k[[T]]$ be the sum of functions with at most one pole. For all $r\in \N$, let $\Sigma^{r}\pazocal{Q}$ be the set of rational numbers that are sum of $r$ elements of $\pazocal{Q}$.  Then $\Sym^r F$ is also rational and its set of poles is contained in $\Sigma^r\pazocal{Q}$. 

Moreover, for each $q\in \pazocal{Q}$, we have that
$$ \ord_q(\Sym^r F)\leq \max\left\{1 - r+ \sum_{i=1}^s \ord_{q_i}(F) \colon q_i\in \pazocal{Q} \mbox{ and } \sum q_i = q \right\} \punto $$
\end{lemma}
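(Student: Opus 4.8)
The plan is to reduce the statement about $\Sym^r$ to the statement about products already proved in Proposition \ref{prop:prodfun}. The key observation is that $\Sym^r$ is built, coefficient by coefficient, out of the power-structure expansion of $(1-t)^{-\alpha}$, and that its values on sums decompose via the identity $\Sym^r(\alpha+\beta) = \sum_{j=0}^r \Sym^j(\alpha)\Sym^{r-j}(\beta)$, which is exactly the identity already used repeatedly in \S \ref{sect:studysym}. So first I would write $F$ in the normal form of Lemma \ref{lem:seppol}, $F(T) = p(T) + \sum_{q\in S} \frac{f_q(T)}{(1-\L^{-qN}T^N)^{a_q}}$, with $N$ a common denominator and $\pazocal{Q} = S$ the pole set.

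Next I would handle the single-term case: when $F = \frac{\alpha T^h}{(1-\L^{-qN}T^N)^e}$, Lemma \ref{lem:sympol} already gives that $\Sym^r F$ has at most one pole of order at most $r(e-1)+1$ in $rq$, which is precisely $1 - r + \sum_{i=1}^r \ord_q(F)$ since $\ord_q(F) = e$ for each of the $r$ copies; this matches the claimed bound in the degenerate case where all $q_i$ are equal. For the general case, I would use the decomposition identity for $\Sym^r$ applied to the finite sum $F = F_1 + \dots + F_M$, where each $F_\ell$ is a single-pole summand as above (the polynomial part $p(T)$ only contributes to $\Sym^j(p)$ for the finitely many $j$, which are themselves polynomials, hence regular). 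This yields
$$
\Sym^r(F) = \sum_{r_1 + \dots + r_M = r} \prod_{\ell=1}^M \Sym^{r_\ell}(F_\ell),
$$
where each $\Sym^{r_\ell}(F_\ell)$ is, by Lemma \ref{lem:sympol}, a rational function with a single pole at $r_\ell q_\ell$ of order at most $r_\ell(e_\ell - 1) + 1$ (and $r_\ell = 0$ contributes a constant). Then Proposition \ref{prop:prodfun} applied to each product $\prod_\ell \Sym^{r_\ell}(F_\ell)$ shows it is rational with a single pole at $\sum_\ell r_\ell q_\ell$, which is a sum of $r$ elements of $\pazocal{Q}$ (counting $q_\ell$ with multiplicity $r_\ell$), of order at most $1 - (\#\{\ell : r_\ell > 0\}) + \sum_\ell (r_\ell(e_\ell-1)+1)$; a short bookkeeping computation shows this is bounded by $1 - r + \sum_i \ord_{q_i}(F)$ over the corresponding multiset of $q_i$'s. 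Summing over the finitely many compositions $(r_1,\dots,r_M)$ of $r$ gives rationality of $\Sym^r F$ and the bound on its pole set and pole orders.

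The main obstacle I expect is the bookkeeping in the order estimate: when several $r_\ell$ land on the \emph{same} rational number $q$ (because distinct summands $F_\ell$, $F_{\ell'}$ of $F$ may share a pole, or because one multiset of $q_i$'s can be realized in several ways), the contributions to the pole at a given $q \in \Sigma^r\pazocal{Q}$ come from \emph{several} terms of the sum over compositions, and one must take the maximum of their orders rather than add them — so the estimate must be phrased as a max over all ways of writing $q = \sum q_i$ with $q_i \in \pazocal{Q}$, exactly as in the statement. Care is also needed that "$\ord_{q_i}(F)$" means the order of the pole of the \emph{whole} $F$ at $q_i$, not of an individual summand; since distinct summands can be taken to have distinct poles after the normal-form reduction (absorbing equal-pole terms together), this is harmless, but it should be spelled out. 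Finally, one must note that the argument genuinely uses invertibility of the $\L^r - 1$, hence the restriction to $\mathcal{R} \in \{\mathcal{M}_k[(\L^n-1)^{-1}], \widehat{\mathcal{M}_k}\}$ (or the sum-of-single-poles hypothesis over $\mathcal{M}_k$), since the partial-fraction manipulations in Lemma \ref{lem:seppol}, Lemma \ref{lem:sympol} and Proposition \ref{prop:prodfun} all do.
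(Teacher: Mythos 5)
Your proposal is correct and follows essentially the same route as the paper: reduce to a sum of single-pole summands via Lemma \ref{lem:seppol}, apply Lemma \ref{lem:sympol} to each, and recombine via the identity $\Sym^r(F+G)=\sum_{j}(\Sym^j F)(\Sym^{r-j}G)$ (Hadamard products) together with Proposition \ref{prop:prodfun}. The paper phrases this as an induction on the number of summands rather than unfolding the full multinomial expansion, but the content — including the bookkeeping that makes $1+\sum_\ell r_\ell(e_\ell-1)$ coincide with $1-r+\sum_i\ord_{q_i}(F)$ — is the same.
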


\begin{proof}
If the Lemma holds for $F$ and $G$, then by Proposition \ref{prop:prodfun}, it holds also for $F+G$, since $\Sym^r(F+G)= (\Sym^r F)+(\Sym^{r-1} F) G +\cdots +(\Sym^r G)$. Thus it is enough to write $F$ as in Equation $\eqref{eqn:zetaK3form}$ and notice that for each addendum the statement coincides with Lemma \ref{lem:sympol}.
\end{proof}

\section{Hilbert schemes of points on a surface}
\label{sect:motintHilb}

\subsection{Construction of a weak Néron model}

\label{subsect:constwnm}

\subsubsection{}  Let $X\rightarrow \Spec K$ be a smooth surface with trivial canonical divisor and let $\omega\in \omega_{X/K}(X)$ be a volume form on it. Let $\mathfrak{X}\rightarrow \Delta$ be a regular model whose central fibre $\mathfrak{X}_k$ is a strict normal crossing divisor of $\mathfrak{X}$. Let us keep the notation of \S \ref{sect:equivsemsred} concerning the field extension over $K$ and the corresponding base-changes. If $\chara k=p>0$, we add the further assumptions that $k$ is separably closed and that the central fibre $\mathfrak{X}_k$ has no components with multiplicity divisible by $p$.

\subsubsection{} The aim of this section is to provide a closed formula for the zeta function of $\Hilb^n(X)$; generalizing the results developed in the previous sections, one expects to be able to understand the value of the zeta function in terms of the zeta functions of $X(i)$ for $1\leq i \leq n$.



\subsubsection{}  Let $a$ be the \emph{lcm} of the multiplicities of the irreducible components of $\mathfrak{X}_k$.
For $n\in \N$, let $\tilde{n}\coloneqq a\lcm(1,2,\dots,n)$ and let $K(\tilde{n})$ be the unique totally ramified extension of $K$ whose degree is $\tilde{n}$, so that $\Gal(K(\tilde{n})/K)=\mu_{\tilde{n}}$.

\subsubsection{} Denote by $\mathfrak{X}(m\tilde{n})$ be the semistable model of $X(m\tilde{n})$ obtained from $\mathfrak{X}$ using the construction of \S \ref{subsect:tripoiloc}. As usual we denote by $\mathfrak{X}(m\tilde{n})_{\sm}$ the smooth locus of $\mathfrak{X}(m\tilde{n})\rightarrow \Delta(m\tilde{n})$. Since $\Hilb^n(\mathfrak{X}(m\tilde{n})_{\sm}/\Delta(m\tilde{n}))\rightarrow \Delta(m\tilde{n})$ is a smooth model of $\Hilb^n(X(m\tilde{n})$, we have that
$$ \mathfrak{X}^{[n]}(m) \coloneqq {\left( \Res_{\Delta(m\tilde{n})/\Delta(m)}\Hilb^n(\mathfrak{X}(m\tilde{n})_{\sm}/\Delta(m\tilde{n}))\right)}^{\mu_{\tilde{n}}} \rightarrow \Delta(m) $$
is a smooth model of $\Hilb^{n}(X(m))$.

\begin{proposition}
\label{prop:wNmHilbn}
Assume that either $K$ is perfect or $\chara K>n$, then
$\mathfrak{X}^{[n]}(m)\rightarrow \Delta(m)$ is a weak Néron model of $\Hilb^n(X(m))$.
\end{proposition}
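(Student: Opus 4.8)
The plan is to follow the proof of Proposition~\ref{prop:WreswNm}, with one essential modification. The scheme being Weil-restricted here, $\Hilb^n(\mathfrak{X}(m\tilde{n})_{\sm}/\Delta(m\tilde{n}))$, is \emph{not} a weak Néron model of $\Hilb^n(X(m\tilde{n}))$ — a length-$n$ subscheme of $X(m\tilde{n})$ need not be supported on $K(m\tilde{n})$-rational points — so the weak-extension part of that argument has to be reworked. First, since $k$ is separably closed and $R(m)$ is complete, every finite étale $\Delta(m)$-scheme is a disjoint union of copies of $\Delta(m)$, so the weak extension property for $\mathfrak{X}^{[n]}(m)$ is equivalent to bijectivity of $\mathfrak{X}^{[n]}(m)(R(m))\to\Hilb^n(X(m))(K(m))$; injectivity is automatic, $\mathfrak{X}^{[n]}(m)$ being separated over the domain $R(m)$. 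For smoothness I would combine \cite[Proposition A.5.2]{CGP} (Weil restriction along the finite flat $\Delta(m\tilde{n})\to\Delta(m)$ preserves smoothness) with \cite[Proposition 3.4]{Edi} (the fixed locus of a smooth scheme under a linearly reductive group is smooth); this is one place the hypothesis is used, since $\mu_{\tilde{n}}$ is linearly reductive over $k$ exactly when $\chara k\nmid\tilde{n}$, and $\tilde{n}=a\lcm(1,\dots,n)$ is prime to $\chara k$ because $a$ is (our standing assumption on the multiplicities of $\mathfrak{X}_k$) and $\lcm(1,\dots,n)$ is, precisely because $K$ is perfect or $\chara K>n$. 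The generic fibre is handled exactly as in Proposition~\ref{prop:WreswNm}: formation of the generic fibre commutes with Weil restriction, $\Hilb^n$ commutes with the base change $X(m\tilde{n})=X(m)\times_{K(m)}K(m\tilde{n})$, and passing to $\mu_{\tilde{n}}$-invariants recovers $\Hilb^n(X(m))$ via Remark~\ref{rem:shres} and Galois descent.

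For surjectivity I would start from a $K(m)$-point of $\Hilb^n(X(m))$, i.e.\ a closed subscheme $Z\subseteq X(m)$ finite flat of length $n$. Each point of $\mathrm{Supp}(Z)$ has residue field a finite extension of $K(m)$ of degree $\le n$; under the hypothesis any such extension is separable, and since $k$ is separably closed it has trivial unramified part, while the ramification is tame (no wild ramification because $\chara K=0$ or $>n$), so it is totally tamely ramified, hence of the form $K(md)$ with $d\le n$, and therefore embeds into $K(m\tilde{n})$ as $\tilde{n}$ is divisible by $\lcm(1,\dots,n)$. Consequently $Z(m\tilde{n}):=Z\times_{K(m)}K(m\tilde{n})$ is supported on $K(m\tilde{n})$-rational points of $X(m\tilde{n})$ and is $\mu_{\tilde{n}}$-invariant. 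Since $\mathfrak{X}(m\tilde{n})$ is proper over $\Delta(m\tilde{n})$, so is $\Hilb^n(\mathfrak{X}(m\tilde{n})/\Delta(m\tilde{n}))\to\Delta(m\tilde{n})$, with generic fibre $\Hilb^n(X(m\tilde{n}))$; the valuative criterion of properness then extends $Z(m\tilde{n})$ uniquely to an $R(m\tilde{n})$-point $\mathcal{Z}$, a flat length-$n$ closed subscheme of $\mathfrak{X}(m\tilde{n})$.

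The hard part — the step I expect to be the main obstacle — is to show that $\mathcal{Z}$ actually lands in $\mathfrak{X}(m\tilde{n})_{\sm}$, i.e.\ that the limiting length-$n$ subscheme does not meet the singular locus of the special fibre. Here I would use that, for each of the finitely many $K(m\tilde{n})$-rational points $x$ in $\mathrm{Supp}(Z(m\tilde{n}))$, the section $\Delta(m\tilde{n})\to\mathfrak{X}(m\tilde{n})$ extending $x$ factors through $\mathfrak{X}(m\tilde{n})_{\sm}$ — because $\mathfrak{X}(m\tilde{n})$ is semistable, hence regular and proper, so $\mathfrak{X}(m\tilde{n})_{\sm}$ is a weak Néron model of $X(m\tilde{n})$ by Example~\ref{ex:semistab}; in particular its special point lies in $\mathfrak{X}(m\tilde{n})_{k,\sm}$. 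By uniqueness of the limit in the proper scheme $\mathfrak{X}(m\tilde{n})$, this section is the scheme-theoretic closure of $\{x\}$, and since $\mathcal{Z}$ is set-theoretically supported on the closures of the points of $\mathrm{Supp}(Z(m\tilde{n}))$, it is set-theoretically contained in the open subscheme $\mathfrak{X}(m\tilde{n})_{\sm}$, hence contained in it as a closed subscheme. Thus $\mathcal{Z}$ is an $R(m\tilde{n})$-point of $\mathcal{H}:=\Hilb^n(\mathfrak{X}(m\tilde{n})_{\sm}/\Delta(m\tilde{n}))$, and it is $\mu_{\tilde{n}}$-fixed by uniqueness of the valuative extension together with the $\mu_{\tilde{n}}$-invariance of $Z(m\tilde{n})$.

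To finish, I would invoke the $\mu_{\tilde{n}}$-equivariant adjunction defining the Weil restriction, $\bigl(\Res_{\Delta(m\tilde{n})/\Delta(m)}\mathcal{H}\bigr)(R(m))=\mathcal{H}(R(m\tilde{n}))$, which identifies $\mathfrak{X}^{[n]}(m)(R(m))$ with $\mathcal{H}(R(m\tilde{n}))^{\mu_{\tilde{n}}}$: the point $\mathcal{Z}$ then yields an $R(m)$-point of $\mathfrak{X}^{[n]}(m)$ whose restriction to $K(m)$ is $Z$ under the generic-fibre identification of the first step. Together with the reduction and the smoothness/generic-fibre checks, this proves that $\mathfrak{X}^{[n]}(m)$ is a weak Néron model of $\Hilb^n(X(m))$. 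Apart from the singular-locus step just described, the only other delicate point is the characteristic bookkeeping — tameness of degree-$\le n$ extensions and linear reductivity of $\mu_{\tilde{n}}$ — which is exactly what the assumption ``$K$ perfect or $\chara K>n$'' is there to guarantee.
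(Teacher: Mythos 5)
Your proposal is correct and follows essentially the same route as the paper: the core point in both is that every point in the support of the length-$n$ subscheme has residue field embedding into $K(m\tilde n)$ (you argue this directly from $[k(P):K(m)]\le n$ and the classification of tame totally ramified extensions; the paper argues the contrapositive via the size of the $\mu_{\tilde n}$-orbit of a non-rational point), combined with the fact that rational points of the regular proper model $\mathfrak{X}(m\tilde n)$ specialize into the smooth locus, and the Weil-restriction adjunction. The extra detail you supply on smoothness, the generic fibre, and the reduction to $R(m)$-points is material the paper delegates to Proposition~\ref{prop:WreswNm}, Lemma~\ref{lem:compWres} and the discussion preceding the statement.
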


\begin{proof}
We assume that $a=1$, i.e. that $X$ has semistable reduction on $K$. The proof in the general case will descend from Proposition \ref{prop:WreswNm} and Lemma \ref{lem:compWres}.
We just need to show that every point $\Spec K(m) \rightarrow \Hilb^n(X(m))\subseteq \mathfrak{X}^{[n]}(m)$ extends to a morphism $\Delta(m)\rightarrow \mathfrak{X}^{[n]}(m)$. 

Consider a point $\Spec K(m) \rightarrow \Hilb^n(X(m))$ and let $Z\subseteq X(m\tilde{n})$ the ($\mu_{\tilde{n}}-$invariant) subscheme representing such point. Either the closure of $Z$ in $\mathfrak{X}(m\tilde{n})$ is contained in $\mathfrak{X}(m\tilde{n})_{\sm}$ or at least one point $P\in \supp Z$ specializes to the singular locus $\mathfrak{X}(m\tilde{n})_{k,\sing}$. In the first case $\overline{Z}$ represents a morphism $\Delta(m) \rightarrow \mathfrak{X}^{[n]}(m)$ extending the given $\Spec K(m)\rightarrow \Hilb^n(X(m))$. If $P\in \supp Z$ specializes to $\mathfrak{X}(m\tilde{n})_{k,\sing}$, then its residue field $k(P)$ contains strictly $K(m\tilde{n})$, for $\mathfrak{X}(m\tilde{n})$ is a regular model of $X(m\tilde{n})$. 

Let $Q\in X(m)$ be the image of $P$ under the map $\Spec k(P)\hookrightarrow X(m\tilde{n}) \rightarrow X(m)$. The degree $[k(Q): K(m)]$ cannot divide $\tilde{n}$, otherwise $k(P)$ would be $K(m\tilde{n})$, thus $[k(Q)\colon K(m)]\geq n+1$. 

Since $Z$ is $\mu_{\tilde{n}}-$invariant, then it contains the whole orbit of $P$ which is the reduced scheme associated to the preimage of $Q$ under the map $\pi\colon X(m\tilde{n})\rightarrow X(m\tilde{n})/\mu_{\tilde{n}}\cong X(m)$. We have that $\pi^{-1}(Q)=\Spec ( k(Q)\otimes_{K(m)} K(m\tilde{n}))$, which is a reduced $K(m\tilde{n})-$algebra of dimension $[k(Q):K(m)]>n$. 

This contradicts the fact that $Z$ is a subscheme of length $n$, thus this case cannot occur and we are done.
%
\end{proof}

\subsubsection{} Our goal is to study the motivic integral of $\Hilb^n(X(m))$ using the models $\mathfrak{X}^{[n]}(m)$ constructed above. In order to simplify notation, we perform the following computations only for $m=1$, working with $\mathfrak{X}^{[n]}=\mathfrak{X}^{[n]}(1)$; similar arguments apply when $m>1$.
Even though $\Hilb^n(\mathfrak{X}(\tilde{n})_{\sm}/\Delta(\tilde{n}))$ is not a weak Néron model of $X(\tilde{n})$, it is a smooth model and the results of \S \ref{par:centrfib} apply. It follows that the connected components of $\mathfrak{X}^{[n]}_k$ are in bijection with the connected components of $\Hilb^n(\mathfrak{X}(\tilde{n})_{k,\sm})^{\mu_{\tilde{n}}}$, moreover if $C\subseteq \mathfrak{X}^{[n]}_k$ is sent to $C'$ via this bijection, then $[C]= \L^{2n-\dim C'}[C']$. 

\subsubsection{} For any $d\in \N$ dividing $\tilde{n}$ we denote by $Y(d)\subseteq \mathfrak{X}(\tilde{n})_{k,\sm}$ the subscheme consisting of the points whose stabilizer is exacty $\mu_{\nicefrac{\tilde{n}}d}$. Then $Y(d)$ is $\mu_{\tilde{n}}-$invariant, since $\mu_{\tilde{n}}$ is an abelian group and points in the same orbit have the same the stabilizer.
Because of Lemma \ref{lem:constab}, $Y(d)$ is at the same time an open and closed subscheme of $\mathfrak{X}_{k,\sm}$ and we have that:
$$ \mathfrak{X}(\tilde{n})_{k,\sm}^{\mu_{\tilde{n}/d}} = \bigsqcup_{d'|d} Y(d')  \punto $$

\begin{remark}
\label{rem:identwNm}
Since $\mathfrak{X}(\tilde{n})_{k,\sm}^{\mu_{\tilde{n}/d}}$ is a scheme of pure dimension 2, it can be identified with the central fibre of $\Res_{\Delta(\tilde{n})/\Delta(d)}{(\mathfrak{X}(\tilde{n})_{\sm})}^{\mu_{\tilde{n}/d}}$, which is a weak Néron model of $X(d)$, via the map $h_k$ described in \S \ref{par:bunmod}. Let $C\subseteq \Res_{\Delta(\tilde{n})/\Delta(d)}{(\mathfrak{X}(\tilde{n})_{\sm})}^{\mu_{\tilde{n}/d}}$ be a connected component. It follows from Lemma \ref{lem:ordcond} and from the fact that $\Gal(K(\tilde{n})|K(d))$ acts trivially on $T_{h_k(C)}$ that, , 
$$ \ord_{h_k(C)}(\omega(\tilde{n}))= \frac{\tilde{n}}d \ord_{C}(\omega(d))  \punto $$
\end{remark}



\begin{definition}
Let $n\geq 0$ be an arbitrary natural number and $\alpha = (\alpha_1,\alpha_2,\dots)\in \N^{\N_{>0}}$. We say that $\alpha$ is a \emph{partition} of $n$ if $\displaystyle \sum_{i>0}i\alpha_i = n$. In this case we write $\alpha \dashv n$.
\end{definition}

\subsubsection{} The following statement gives a decomposition of the central fibre as a union of closed and open subschemes; it is an \emph{ad hoc} partition that is more suitable, for our computation, than the "canonical" stratification of the Hilbert schemes:

\begin{proposition}
\label{prop:partstrat}
$\Hilb^n(\mathfrak{X}(\tilde{n})_{k,\sm})^{\mu_{\tilde{n}}}$ admits the following decomposition as disconnected union of subschemes:
$$ \Hilb^n(\mathfrak{X}(\tilde{n})_{k,\sm})^{\mu_{\tilde{n}}}\cong \bigsqcup_{\alpha \dashv n} \prod_{j=1}^n \Hilb^{\alpha_j}(Y(j)/\mu_j) \punto $$
Moreover, for a fixed partition $\alpha \dashv n$, the isomorphism above restricts to an isomorphim between $\prod_{j=1}^n \Hilb^{\alpha_j}(Y(j)/\mu_j)$ and a finite union of connected components of $\Hilb^n(\mathfrak{X}(\tilde{n})_{k,\sm})^{\mu_{\tilde{n}}}$.
\end{proposition}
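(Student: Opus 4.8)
The plan is to produce the isomorphism on the level of functors of points and then read off its structural consequences. Note first that both sides are $k$-schemes of finite type: $\mathfrak{X}(\tilde{n})_{k,\sm}$ is quasi-projective over $k$, so $\Hilb^n(\mathfrak{X}(\tilde{n})_{k,\sm})$ is quasi-projective and its $\mu_{\tilde{n}}$-fixed locus is a closed subscheme, while the right-hand side is a finite disjoint union of products of Hilbert schemes of smooth surfaces (there are finitely many partitions of $n$, and $\alpha_j=0$ for $j>n$, which is why the product runs over $j=1,\dots,n$; moreover $j\le n$ forces $j\mid\tilde{n}$ because $\tilde{n}=a\lcm(1,\dots,n)$).

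First I would record the geometry of the strata $Y(j)$. Since $\mu_{\tilde{n}/j}$ lies in the stabilizer of every point of $Y(j)$ it acts trivially there, and by construction of $Y(j)$ the quotient group $\mu_j=\mu_{\tilde{n}}/\mu_{\tilde{n}/j}$ acts with trivial stabilizers; as $k$ contains the roots of unity and $\tilde{n}$ is invertible in $k$, $\mu_j$ is a constant finite group and this free action exhibits $Y(j)\to Y(j)/\mu_j$ as a finite étale $\mu_j$-torsor onto a smooth $k$-surface. Combined with Lemma \ref{lem:constab}, which gives $\mathfrak{X}(\tilde{n})_{k,\sm}=\bigsqcup_{j\mid\tilde{n}}Y(j)$ as open and closed subschemes, this is all the input I need.

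Next I would unwind a $T$-point of the left-hand side, namely a $\mu_{\tilde{n}}$-invariant closed subscheme $\mathcal{Z}\subseteq\mathfrak{X}(\tilde{n})_{k,\sm}\times_k T$ that is finite flat of degree $n$ over $T$. The open-closed decomposition above splits $\mathcal{Z}=\bigsqcup_j\mathcal{Z}_j$ with $\mathcal{Z}_j\subseteq Y(j)\times_k T$ finite flat over $T$ of locally constant degree $n_j$ and $\sum_j n_j=n$, hence splits $T$ into open and closed pieces indexed by the tuples $(n_j)$. Each $\mathcal{Z}_j$ is $\mu_j$-invariant (since $\mu_{\tilde{n}/j}$ acts trivially on $Y(j)$), so fppf descent of closed immersions along the torsor $Y(j)\times_k T\to(Y(j)/\mu_j)\times_k T$ yields a unique closed subscheme $\mathcal{W}_j\subseteq(Y(j)/\mu_j)\times_k T$ with $\mathcal{Z}_j$ as its preimage; as this torsor has degree $j$ invertible on $T$, a splitting of its trace realizes the structure sheaf of $\mathcal{W}_j$ over $T$ as a direct summand of that of $\mathcal{Z}_j$, so $\mathcal{W}_j\to T$ is again finite flat, of degree $\alpha_j\coloneqq n_j/j$ — in particular $j\mid n_j$. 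Thus $\sum_j j\alpha_j=n$, i.e. $\alpha\dashv n$, and $\mathcal{Z}\mapsto(\mathcal{W}_j)_{j=1}^n$ defines a morphism to the right-hand side. The inverse sends $(\mathcal{W}_j)_j$ to $\mathcal{Z}\coloneqq\bigsqcup_j\big((Y(j)\times_k T)\times_{(Y(j)/\mu_j)\times_k T}\mathcal{W}_j\big)$, which is $\mu_{\tilde{n}}$-invariant and finite flat of degree $\sum_j j\alpha_j=n$; these two assignments are mutually inverse and functorial in $T$, so they give the asserted isomorphism of $k$-schemes, and by construction it carries the summand indexed by $\alpha$ onto the open and closed locus where the tuple $(n_j)=(j\alpha_j)$ is realized.

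For the last assertion, fix $\alpha\dashv n$: each factor $\Hilb^{\alpha_j}(Y(j)/\mu_j)$ is smooth over $k$ because $Y(j)/\mu_j$ is a smooth surface (this is the smoothness of Hilbert schemes of points on a relative surface recalled in \S\ref{sect:wnm}), so $\prod_{j=1}^n\Hilb^{\alpha_j}(Y(j)/\mu_j)$ is smooth; via the isomorphism just constructed it is an open and closed subscheme of the finite-type $k$-scheme $\Hilb^n(\mathfrak{X}(\tilde{n})_{k,\sm})^{\mu_{\tilde{n}}}$, and an open and closed subscheme of a Noetherian scheme is a finite union of connected components. The step I expect to be the main obstacle is exactly the descent: one must verify carefully, from the precise definition of $Y(j)$ and Lemma \ref{lem:constab}, that $Y(j)\to Y(j)/\mu_j$ is genuinely an étale torsor, and that $\mu_j$-invariant finite flat families descend through it naturally enough to upgrade the resulting bijection on $T$-points to an isomorphism of schemes rather than a mere identification of geometric points.
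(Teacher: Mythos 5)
Your proof is correct and follows essentially the same route as the paper's: decompose an invariant finite flat family according to the open-closed strata $Y(j)$, use the freeness of the $\mu_j$-action on $Y(j)$ to pass (by descent along the degree-$j$ étale quotient) between $\mu_{\tilde{n}}$-invariant subschemes of $Y(j)$ and subschemes of $Y(j)/\mu_j$, all functorially in the test scheme. You simply spell out the descent and flatness details that the paper's terser argument leaves implicit.
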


\begin{proof}
Fix a $k-$scheme $S$. For any closed subscheme $Z\subseteq \mathfrak{X}(\tilde{n})_S$ endowed with a finite map $Z\rightarrow S$, set $Z_j\coloneqq Z \cap Y(j)_S$. If $Z$ is stable under the action of $\mu_{\tilde{n}}$, then every $Z_j$, which is the intersection of two stable schemes, is stable as well; moreover the induced action of $\mu_j= \mu_{\tilde{n}}/\mu_{\nicefrac{\tilde{n}}j}$ on $Y(j)_S$ is free, thus the induced maps $\pi_j\colon Y(j)_S\rightarrow  Y(j)_S/\mu_{j}$ and $Z_j\rightarrow \pi_j (Z_j)$ are étale of degree $j$. 

In this way, from any invariant finite $S-$subscheme of $\mathfrak{X}(\tilde{n})_S$ we construct a finite subscheme in each $Y(j)_S/\mu_{j}$; on the other hand given a sequence of finite $S-$subschemes of $Y(j)_S/\mu_{j}$ of length $\alpha_j$ we get a unique $\mu_{\tilde{n}}-$stable $S-$subscheme of $\mathfrak{X}(\tilde{n})_S$ whose length is $\displaystyle \sum_j j\alpha_j$. 

The last statement follows directly from the fact that the $Y(j)-$s are themselves open and closed subschemes of $\mathfrak{X}(\tilde{n})_{k,\sm}$.
\end{proof}

\subsubsection{} Thus, recalling that $\Hilb^{\alpha_j}(Y(j)/\mu_j)$ is pure of dimension $2\alpha_j$, we conclude that $\displaystyle \mathfrak{X}^{[n]}_k= \bigsqcup_{\alpha \dashv n}\mathfrak{X}^{[n]}_{k,\alpha}$, where $\mathfrak{X}^{[n]}_{k,\alpha}$ is an affine bundle of rank $\displaystyle \sum_j 2(j-1)\alpha_j$ on
$$ \Hilb^{\alpha_1}(Y(1))\times \dots \times \Hilb^{\alpha_n}(Y(n)/\mu_n) \punto $$
We thus have the following:

\begin{corollary}
The following equation holds in the Grothendieck ring of varieties:
$$ \left[ \mathfrak{X}^{[n]}_k \right]= \sum_{\alpha \dashv n} \prod_{j=1}^n \L^{2(j-1)\alpha_j}[\Hilb^{\alpha_j}(Y(j)/\mu_j)] \punto $$
\end{corollary}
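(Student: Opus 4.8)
The plan is to read the identity off the geometric decomposition obtained immediately above, namely $\mathfrak{X}^{[n]}_k = \bigsqcup_{\alpha \dashv n}\mathfrak{X}^{[n]}_{k,\alpha}$, in which each piece $\mathfrak{X}^{[n]}_{k,\alpha}$ is an affine bundle of rank $r_\alpha \coloneqq \sum_{j=1}^n 2(j-1)\alpha_j$ over $B_\alpha \coloneqq \prod_{j=1}^n \Hilb^{\alpha_j}(Y(j)/\mu_j)$. So the work reduces to three bookkeeping steps in $K_0(\Var_k)$.

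First, the decomposition is a \emph{finite} disjoint union of locally closed (in fact open and closed, by Proposition~\ref{prop:partstrat} together with Lemma~\ref{lem:constab}) subschemes, so the scissor relations give $[\mathfrak{X}^{[n]}_k] = \sum_{\alpha \dashv n}[\mathfrak{X}^{[n]}_{k,\alpha}]$ in $K_0(\Var_k)$. Second, I would establish $[\mathfrak{X}^{[n]}_{k,\alpha}] = \L^{r_\alpha}[B_\alpha]$: this is the only step with real content. The affine bundles produced by the Greenberg-scheme / Weil-restriction construction of \S\ref{par:bunmod}–\S\ref{sect:affbdlWR} are Zariski-locally trivial — this is precisely what allows \S\ref{sect:affbdlWR} to record the relation $[C]=\L^{\dim \mathfrak{X}_k - \dim C'}[C']$ already in $K_0(\Var_k)$ — so one trivializes $\mathfrak{X}^{[n]}_{k,\alpha}$ over a finite open cover $\{U_i\}$ of $B_\alpha$, writes $\mathfrak{X}^{[n]}_{k,\alpha}|_{U_i} \cong U_i \times_k \A^{r_\alpha}_k$, and runs a Noetherian induction using the scissor relations, $[\A^{r_\alpha}_k] = \L^{r_\alpha}$, and $[U\times_k V]=[U][V]$.

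Third, multiplicativity of the class under fibre products over $\Spec k$ gives $[B_\alpha] = \prod_{j=1}^n [\Hilb^{\alpha_j}(Y(j)/\mu_j)]$, and since $\L^{r_\alpha} = \prod_{j=1}^n \L^{2(j-1)\alpha_j}$ we obtain $[\mathfrak{X}^{[n]}_{k,\alpha}] = \prod_{j=1}^n \L^{2(j-1)\alpha_j}[\Hilb^{\alpha_j}(Y(j)/\mu_j)]$; summing over the partitions $\alpha \dashv n$ yields the stated formula. I do not anticipate a genuine obstacle: once the Zariski-local triviality of the bundles set up in \S\ref{sect:affbdlWR} is invoked, everything is a formal manipulation in $K_0(\Var_k)$. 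The only point requiring a word of care is that the base pieces $\Hilb^{\alpha_j}(Y(j)/\mu_j)$ are honest $k$-varieties, which holds because $\mu_j$ acts freely on $Y(j)$, so the quotient $Y(j)/\mu_j$ is a quasi-projective scheme and hence so are its Hilbert schemes of points.
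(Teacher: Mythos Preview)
Your proposal is correct and matches the paper's approach: the paper gives no separate proof of this corollary, treating it as an immediate consequence of the preceding paragraph (the decomposition $\mathfrak{X}^{[n]}_k = \bigsqcup_{\alpha\dashv n}\mathfrak{X}^{[n]}_{k,\alpha}$ into affine bundles over the products $\prod_j \Hilb^{\alpha_j}(Y(j)/\mu_j)$) together with the relation $[C]=\L^{r}[C']$ already recorded in \S\ref{sect:affbdlWR}. Your write-up is in fact more explicit than the paper's, since you spell out why the affine-bundle relation holds in $K_0(\Var_k)$ via Zariski-local triviality, a point the paper leaves implicit.
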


\subsection{The volume form on $\mathfrak{X}^{[n]}$}

\subsubsection{} There is a volume form, $\omega^{[n]}$, on $\Hilb^n(X)$ that naturally arises from the given $\omega\in \omega_{X/K}$. In this paragraph we shall recall its construction and compute its zeroes and poles on $\mathfrak{X}^{[n]}$. 

\subsubsection{} Let $\pr_i \colon X^n\rightarrow X$, for $i\in \{1,\dots, n\}$, denote the projections on the factors. Then $\pr_1^\ast \omega \wedge \cdots \wedge \pr_n^\ast \omega$ is a global section of $\omega_{X^n/K}$ which, being invariant under the permutation of coordinates, descends to a global section of $\omega_{\Sym^n X/K}$, which we denote by $\varphi$. Let finally $\omega^{[n]}$ be the pull-back of $\varphi$ through the Hilbert-Chow morphism, thus $\omega^{[n]}\in H^0(\Hilb^n(X), \omega_{\Hilb^n(X)})$ is a volume form on $\Hilb^n(X)$.




\subsubsection{} Now we shall compute the zeroes and poles of $\omega^{[n]}$ seen as a rational section of $\omega_{\mathfrak{X}^{[n]}/\Delta}$. Since it is a volume form on the generic fibre of $\mathfrak{X}^{[n]}$, its zeroes or poles are all irreducible components of the central fibre. Let us fix a connected component $C\subseteq \mathfrak{X}_k^{[n]}$ and let us denote by $C'$ the connected component of $\Hilb^n(\mathfrak{X}(\tilde{n})_{k,\sm})^{\mu_{\tilde{n}}}$ such that $C\rightarrow C'$ is the affine bundle described in \S \ref{sect:affbdlWR}. In the following lemma we compute the conductor of the action of $\mu_{\tilde{n}}$ at points of $C'$ in terms of the partition of $n$ corresponding to the stratum of $\Hilb^n(\mathfrak{X}(\tilde{n})/\Delta(\tilde{n}))^{\mu_{\tilde{n}}}$ containing $C'$.

\begin{lemma}
Consider the decomposition of $\Hilb^{n}(\mathfrak{X}(\tilde{n})_{k, \sm})^{\mu_{\tilde{n}}}$ of Proposition \ref{prop:partstrat} and fix a point $[Z]$ lying inside the stratum corresponding to $\alpha \dashv n$. Then:
\begin{enumerate}
\item The conductor of the action of $\mu_{\tilde{n}}$ at $[Z]$ is 
$$c\left(\Hilb^{n}(\mathfrak{X}(\tilde{n})_{k, \sm}),[Z]\right)= \tilde{n}\sum_{j=1}^n (j-1)\alpha_j \punto $$
\item If we denote by $[Z_j]$ the point of $\Hilb^{\alpha_j}(Y(j)/\mu_j)$ corresponding to $Z_j/\mu_j$, then one has that:
$$ \ord_{[Z]}(\omega^{[n]}(\tilde{n}))= \tilde{n}\sum_{j=1}^n \ord_{[Z_j/\mu_j]}(\omega^{[\alpha_j]}(j)) \virgola $$
where $Y(j)$ are considered as part of the central fibre of a weak Néron model for $X(j)$ as in Remark \ref{rem:identwNm}.
\end{enumerate}
\end{lemma}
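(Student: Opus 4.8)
The plan is to reduce both statements to local computations on the semistable model $\mathfrak{X}(\tilde{n})$ via the stratification of Proposition \ref{prop:partstrat}, and to exploit the product structure of the tangent space and of the volume form along each stratum.

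First I would analyze the tangent space of $\Hilb^{n}(\mathfrak{X}(\tilde{n})_{k,\sm})$ at a point $[Z]$ lying in the stratum indexed by $\alpha\dashv n$. By Proposition \ref{prop:partstrat}, a neighbourhood of $[Z]$ in $\Hilb^n(\mathfrak{X}(\tilde{n})_{k,\sm})$ is isomorphic (compatibly with the $\mu_{\tilde{n}}$-action, which on $\Hilb^n$ is induced by its action on $\mathfrak{X}(\tilde{n})_{k,\sm}$) to a neighbourhood of the corresponding point in $\prod_j \Hilb^{\alpha_j}(Y(j))$, where $\mu_{\tilde{n}}$ acts on the factor $\Hilb^{\alpha_j}(Y(j))$ through its quotient $\mu_j=\mu_{\tilde{n}}/\mu_{\tilde{n}/j}$, and this latter action is \emph{free} on $\Hilb^{\alpha_j}(Y(j))$ because $\mu_j$ acts freely on $Y(j)$ and hence freely on its Hilbert scheme of points. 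Consequently the $\mu_{\tilde{n}}$-representation $T_{[Z]}\Hilb^n(\mathfrak{X}(\tilde{n})_{k,\sm})$ decomposes as a direct sum over $j$ of $\mu_{\tilde{n}}$-modules each of which is induced from the trivial $\mu_{\tilde{n}/j}$-module; equivalently, on each summand the subgroup $\mu_{\tilde{n}/j}$ acts trivially and $\mu_j$ permutes a basis transitively. For such a module of rank $2\alpha_j$ (namely $2\alpha_j$ copies of the regular representation of $\mu_j$, after noting $\dim_k T_{[Z_j]}\Hilb^{\alpha_j}(Y(j))=2\alpha_j$), the tuple of exponents with respect to a generator $\zeta$ of $\mu_{\tilde{n}}$ is, for each of the $2\alpha_j$ blocks, the multiset $\{0,\tilde{n}/j,2\tilde{n}/j,\dots,(j-1)\tilde{n}/j\}$, whose sum is $\tilde{n}(j-1)/2$. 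Summing over the $2\alpha_j$ blocks and over $j$ gives
$$c\bigl(\Hilb^n(\mathfrak{X}(\tilde{n})_{k,\sm}),[Z]\bigr)=\sum_{j=1}^n 2\alpha_j\cdot \frac{\tilde{n}(j-1)}{2}=\tilde{n}\sum_{j=1}^n (j-1)\alpha_j\virgola$$
which is part (1). The main point to get right here is that $T_{[Z]}\Hilb^n$ really is the direct sum, as a $\mu_{\tilde{n}}$-module, of the induced pieces coming from the $Y(j)$-factors — this is where Proposition \ref{prop:partstrat} and the freeness of the $\mu_j$-action are doing the work, and it is the step I expect to require the most care (in particular checking that the exponents are computed with respect to the correct generator and that no $\mu_{\tilde{n}/j}$-invariant directions are miscounted).

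For part (2), I would argue that the volume form $\omega^{[n]}$ has, along each stratum, a product description mirroring Proposition \ref{prop:partstrat}: by construction $\omega^{[n]}$ is pulled back from $\pr_1^\ast\omega\wedge\cdots\wedge\pr_n^\ast\omega$ on $\Sym^n X$, so on the open stratum of $\Hilb^n(\mathfrak{X}(\tilde{n})_{\sm})$ where the $n$ points are distinct and distributed with multiplicities $\alpha_j$ among the loci $Y(j)$, the form $\omega^{[n]}(\tilde{n})$ is (up to a unit) the external product $\boxtimes_j \omega^{[\alpha_j]}_{Y(j)}(\tilde{n})$, where $\omega^{[\alpha_j]}_{Y(j)}(\tilde{n})$ is the analogous form on $\Hilb^{\alpha_j}(Y(j))$ built from $\omega(\tilde{n})|_{Y(j)}$. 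Taking orders at $[Z]=([Z_j])_j$ turns the product into a sum: $\ord_{[Z]}(\omega^{[n]}(\tilde{n}))=\sum_j \ord_{[Z_j]}(\omega^{[\alpha_j]}_{Y(j)}(\tilde{n}))$. It remains to identify $\ord_{[Z_j]}(\omega^{[\alpha_j]}_{Y(j)}(\tilde{n}))$ with $\tilde{n}\,\ord_{[Z_j/\mu_j]}(\omega^{[\alpha_j]}(j))$. For this I would combine two facts: the étale quotient map $\pi_j\colon Y(j)\to Y(j)/\mu_j$ (étale of degree $j$, by Lemma \ref{lem:constab} and the freeness of the $\mu_j$-action) identifies $\omega(\tilde{n})|_{Y(j)}$ with $\pi_j^\ast\bigl(\omega(\tilde{n}/j)|_{Y(j)/\mu_j}\bigr)$ up to a unit, hence induces an étale map $\Hilb^{\alpha_j}(Y(j))\to \Hilb^{\alpha_j}(Y(j)/\mu_j)$ matching the two $\omega^{[\alpha_j]}$'s; and then Remark \ref{rem:identwNm} (equivalently Lemma \ref{lem:ordcond} applied over the extension $K(\tilde{n})/K(j)$, where $\Gal(K(\tilde{n})|K(j))$ acts trivially on the relevant tangent spaces so the conductor term drops out) gives $\ord(\omega(\tilde{n}))=(\tilde{n}/j)\ord(\omega(j))$ on the weak Néron model of $X(j)$, and this scaling factor of $\tilde n / j$ propagates through the Hilbert scheme of $\alpha_j$ points — where it is applied once per point, contributing a further factor of... no: the form $\omega^{[\alpha_j]}$ involves $\omega$ on each of the $\alpha_j$ points, and the base change $K(\tilde n)/K(j)$ rescales the uniformizer uniformly, so on $\Hilb^{\alpha_j}$ one still gets a single factor $\tilde n / j$ between $\ord(\omega^{[\alpha_j]}(\tilde n))$ and $\ord(\omega^{[\alpha_j]}(j))$; combined with the extra factor of $j$ coming from the degree-$j$ étale cover $Y(j)\to Y(j)/\mu_j$ — under which $\ord$ on the source and the target agree up to the ramification index, which here is $1$ — one obtains precisely $\tilde n$. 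Assembling, $\ord_{[Z]}(\omega^{[n]}(\tilde{n}))=\sum_j \tilde{n}\,\ord_{[Z_j/\mu_j]}(\omega^{[\alpha_j]}(j))$, which is (2).

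The delicate bookkeeping is entirely in part (2): keeping straight the three different fields $K$, $K(j)$, $K(\tilde n)$, the corresponding rescalings of uniformizers, and the degree-$j$ étale cover $Y(j)\to Y(j)/\mu_j$, so that the net factor relating $\ord_{[Z]}(\omega^{[n]}(\tilde n))$ to the $\ord_{[Z_j/\mu_j]}(\omega^{[\alpha_j]}(j))$ comes out as exactly $\tilde n$ and not, say, $\tilde n / j$ or $\tilde n^2 / j$. I would pin this down by first treating $n=1$ (where the statement is exactly Remark \ref{rem:identwNm}) and then checking that passing to $\Hilb^{\alpha_j}$ multiplies neither side by any further $j$-dependent constant, using that $\omega^{[\alpha_j]}$ is locally $\omega\boxtimes\cdots\boxtimes\omega$ ($\alpha_j$ factors) on the open locus of distinct points, which is dense in every component of $\Hilb^{\alpha_j}(Y(j))$ so suffices for computing orders of vanishing along components.
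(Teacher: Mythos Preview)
Your approach coincides with the paper's: both reduce (the paper explicitly at the outset, you implicitly at the end) to the case where $Z$ is reduced, decompose $T_{[Z]}\Hilb^n\cong\bigoplus_{p\in\supp Z}T_p\mathfrak X(\tilde n)_{k,\sm}$, recognise each $\mu_j$-orbit's contribution as two copies of the regular $\mu_j$-representation inflated to $\mu_{\tilde n}$, and invoke Remark~\ref{rem:identwNm} for the orders. The paper writes down the eigenvectors explicitly rather than naming the induced representation, but the content is identical.

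There is, however, a running indexing error that muddies your argument and derails part~(2) as written. The open--closed piece of $\Hilb^n(\mathfrak X(\tilde n)_{k,\sm})$ containing $[Z]$ is $\prod_j\Hilb^{j\alpha_j}(Y(j))$ (dimension $2n$), not $\prod_j\Hilb^{\alpha_j}(Y(j))$; Proposition~\ref{prop:partstrat} only identifies the $\mu_{\tilde n}$-\emph{fixed locus} of the former with $\prod_j\Hilb^{\alpha_j}(Y(j)/\mu_j)$. Your claim that $\mu_j$ acts freely on the Hilbert scheme of $Y(j)$ is also false (a full $\mu_j$-orbit is a fixed point of $\Hilb^j(Y(j))$). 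In part~(1) this only produces an internal inconsistency---you state rank $2\alpha_j$ but then correctly use $2\alpha_j$ copies of the $j$-dimensional regular representation---and the conductor computation survives. In part~(2) it is fatal: there is no relevant \'etale map $\Hilb^{\alpha_j}(Y(j))\to\Hilb^{\alpha_j}(Y(j)/\mu_j)$ supplying an ``extra factor of $j$''. The factor of $j$ you are hunting for is purely combinatorial: for reduced $Z$ one has $\ord_{[Z]}(\omega^{[n]}(\tilde n))=\sum_{p\in\supp Z}\ord_p(\omega(\tilde n))$, and on the $j\alpha_j$ points of $Z_j$ Remark~\ref{rem:identwNm} gives $\ord_p(\omega(\tilde n))=\frac{\tilde n}{j}\ord_p(\omega(j))$, while each $\mu_j$-orbit contributes $j$ points with the same value of $\ord_p(\omega(j))$, so the sum over $p\in Z_j$ is $\frac{\tilde n}{j}\cdot j\sum_{q\in Z_j/\mu_j}\ord_q(\omega(j))=\tilde n\,\ord_{[Z_j/\mu_j]}(\omega^{[\alpha_j]}(j))$. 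That is precisely the paper's chain of equalities.
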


\begin{proof}
Since the values of $c \left(\Hilb^{n}(\mathfrak{X}(\tilde{n})_{k, \sm}),[Z] \right)$ and $\ord_{[Z]}(\omega^{[n]}(\tilde{n}))$ depend only on the connected component containing $[Z]$ and since the generic point of each connected component corresponds to a reduced scheme, we may compute them with the additional assumption that $Z$ is a reduced subscheme of $\mathfrak{X}(\tilde{n})_{k,\sm}$. In particular $Z$ is the disjoint union of $\alpha_1$ orbits of length 1, $\alpha_2$ orbits of length 2 and so on. There is an equivariant isomorphism 
$$ T_{[Z]} \Hilb^{n}(\mathfrak{X}(\tilde{n})_{k,\sm}) \cong \bigoplus_{p\in \supp Z}T_p\mathfrak{X}(\tilde{n})_{k,\sm} \punto $$ 
Let $\zeta=\zeta_{\tilde{n}}$ be a primitive root of  unity and let $\sigma$ be the unique generator of $\mu_{\tilde{n}}$ such that $\sigma$ acts on $R(\tilde{n})$ by multiplying the uniformizing parametre by $\zeta$. Consider an orbit of points $p_0,\dots, p_{j-1}\in Z$, let $e_1,e_2$ be two generators of $T_{p_0}\mathfrak{X}(\tilde{n})_k$, so that $\sigma^l(e_1), \sigma^l(e_2)$ will give a basis of $T_{p_l}\mathfrak{X}(\tilde{n})_k$ for each $l=0,\dots, j-1$. Notice that $\sigma^j(e_h)=e_h$ for $h=1,2$ since $\mu_{\nicefrac{\tilde{n}}j}$ acts trivially on the whole connected component cointaining $p_0$. 

For $i=0,\dots, j-1$, $h=1,2$ we have that
$$ (e_h,\zeta^{\nicefrac{i\tilde{n}}j}\sigma(e_h),\zeta^{\nicefrac{2i\tilde{n}}j}\sigma^2(e_h),\dots, \zeta^{\nicefrac{(j-1)i\tilde{n}}j}\sigma^{j-1}(e_h)) \in T_{p_0}\mathfrak{X}(\tilde{n})_k\oplus \cdots \oplus T_{p_{j-1}}\mathfrak{X}(\tilde{n}) $$
is an eigenvector with eigenvalue $\zeta^{-\nicefrac{i\tilde{n}}j}$. In total there are $2j$ of such eigenvectors, which constitute a basis for $T_{p_0}\mathfrak{X}(\tilde{n})\oplus \cdots \oplus T_{p_{j-1}}\mathfrak{X}(\tilde{n})_k$. 

The sum of the exponents of this base is
$$ 2\sum_{i=0}^{j-1} -\frac{i\tilde{n}}j=
-(j-1)\tilde{n} $$
We construct eigenvectors of $T_{[Z]} \Hilb^{n}(\mathfrak{X}(\tilde{n})_k)$ by putting a vector such as the above one at the coordinates corresponding to an orbit and 0 at the other coordinates. Running through all the possible orbits, we get a base of eigenvectors of $T_{[Z]} \Hilb^{n}(\mathfrak{X}(\tilde{n})/\Delta(\tilde{n}))$. Thus summing the exponents among all the eigenvectors will lead to the desired result for the conductor. 

Concerning the order of the volume form, we have that
\begin{align*}
\ord_{[Z]}(\omega^{[n]}(\tilde{n})) &=\sum_{p\in \supp Z} \ord_{p}(\omega(\tilde{n})) \\ 
& =\sum_{j=1}^n \frac{\tilde{n}}j\sum_{p\in \supp Z_j} \ord_{p}(\omega(j)) \\
& =\sum_{j=1}^n \tilde{n} \sum_{p\in Z_j/\mu_j} \ord_{p}(\omega(j)) \\
& =\tilde{n} \sum_{j=1}^n \ord_{Z_j/\mu_j}(\omega(j)^{[\alpha_j]}) \punto
\end{align*}
Where  the first and  the last equality follow from the fact that the stalk of the canonical bundle at a point with reduced support of an Hilbert scheme are the tensor products of the stalks of the canonical bundle of the surface at every point in the support.
The second equality follows from Remark \ref{rem:identwNm}.
The third equality follow from the fact that $\omega(j)$ has the same order on all the $j$ points of an orbit of $\mu_j$.
\end{proof}

\subsubsection{} We are able, now, to compute the order of $\omega^{[n]}$ at any point $z\in \mathfrak{X}^{[n]}_k$:

\begin{corollary}
Let $\pi\colon \mathfrak{X}^{[n]}_k\rightarrow \Hilb^{n}(\mathfrak{X}(\tilde{n})_{k,\sm})$ and assume that 
\begin{align*}
\pi(z) & = (\pi^1(z),\pi^2(z),\dots,\pi^n(z))  \\
 & \in \Hilb^{\alpha_1}(Y(1)) \times \Hilb^{\alpha_2}(Y(2)/\mu_2)\times \cdots \times \Hilb^n(Y(n)/\mu_n) \punto
\end{align*}

We have that $\displaystyle \ord_{z}(\omega^{[n]})= \sum_{j=1}^n \left( (j-1)\alpha_j+\ord_{\pi^j(z)}(\omega(j)^{[\alpha_j]}) \right)$.
\end{corollary}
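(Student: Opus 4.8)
The plan is to read the Corollary off Lemma~\ref{lem:ordcond}, applied to the model $\mathfrak{X}^{[n]}$, by substituting into it the two formulas of the preceding Lemma. Recall that $\mathfrak{X}^{[n]} = {\left(\Res_{\Delta(\tilde n)/\Delta}\Hilb^n(\mathfrak{X}(\tilde n)_{\sm}/\Delta(\tilde n))\right)}^{\mu_{\tilde n}}$ is obtained, by Weil restriction along the totally ramified degree-$\tilde n$ extension $K(\tilde n)/K$, from the smooth $\Delta(\tilde n)$-model $\mathfrak{X}'\coloneqq\Hilb^n(\mathfrak{X}(\tilde n)_{\sm}/\Delta(\tilde n))$ of $\Hilb^n(X(\tilde n)) = \Hilb^n(X)(\tilde n)$, and that the map $\pi$ of the statement is the canonical map $h_k\colon\mathfrak{X}^{[n]}_k\to\mathfrak{X}'_k$ of \S\ref{par:bunmod} induced by the inclusion $\mathfrak{X}^{[n]}\subseteq\Res_{\Delta(\tilde n)/\Delta}\mathfrak{X}'$. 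I would first record that $\omega^{[n]}$ pulls back along this base change to $\omega^{[n]}(\tilde n) = {(\omega(\tilde n))}^{[n]}$ on $\Hilb^n(X(\tilde n))$, since the construction of $(-)^{[n]}$ (the Hilbert-Chow pull-back of the descent of $\pr_1^\ast\omega\wedge\cdots\wedge\pr_n^\ast\omega$) commutes with base change.

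Next I would fix the combinatorics. Let $C\in\pi_0(\mathfrak{X}^{[n]}_k)$ be the component containing $z$ and set $C'\coloneqq\pi(z) = h_k(C)$; this is a connected component of the fixed locus $\Hilb^n(\mathfrak{X}(\tilde n)_{k,\sm})^{\mu_{\tilde n}}$, so by Proposition~\ref{prop:partstrat} there is a unique partition $\alpha\dashv n$ with $C'\subseteq\Hilb^{\alpha_1}(Y(1))\times\cdots\times\Hilb^n(Y(n)/\mu_n)$, and $\pi(z) = (\pi^1(z),\dots,\pi^n(z))$ in these coordinates. Since both $\ord$ and the conductor are constant on connected components, it is enough to establish the identity with $\pi(z)$ in the role of the point $[Z]$ in a stratum treated by the preceding Lemma; in its notation $\pi^j(z) = [Z_j/\mu_j]$, where $Y(j)$ is viewed inside the special fibre of a weak Néron model of $X(j)$ as in Remark~\ref{rem:identwNm}, so that $\omega(j)^{[\alpha_j]}$ is the corresponding volume form.

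Then I would apply Lemma~\ref{lem:ordcond} with $L = K(\tilde n)$, $[L:K]=\tilde n$, $\mathfrak{X}'=\Hilb^n(\mathfrak{X}(\tilde n)_{\sm}/\Delta(\tilde n))$, $\omega = \omega^{[n]}$, $\omega_L = \omega^{[n]}(\tilde n)$, and the component $C$; this writes $\tilde n\cdot\ord_z(\omega^{[n]})$ in terms of $\ord_{\pi(z)}(\omega^{[n]}(\tilde n))$ and the conductor $c\left(\Hilb^n(\mathfrak{X}(\tilde n)_{k,\sm}),\pi(z)\right)$. Plugging in part~(1) of the preceding Lemma, $c\left(\Hilb^n(\mathfrak{X}(\tilde n)_{k,\sm}),\pi(z)\right) = \tilde n\sum_{j=1}^n(j-1)\alpha_j$, and part~(2), $\ord_{\pi(z)}(\omega^{[n]}(\tilde n)) = \tilde n\sum_{j=1}^n\ord_{\pi^j(z)}(\omega(j)^{[\alpha_j]})$, and then dividing by $\tilde n$, gives $\ord_z(\omega^{[n]}) = \sum_{j=1}^n\left((j-1)\alpha_j + \ord_{\pi^j(z)}(\omega(j)^{[\alpha_j]})\right)$, which is the assertion.

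The only delicate point is the normalization of the conductor term when substituting into Lemma~\ref{lem:ordcond}: I must be sure that, after clearing the denominator $\tilde n$, the combinatorial shift $\sum_{j=1}^n(j-1)\alpha_j$ enters with a plus sign, i.e. that the conductor is \emph{added}. A clean consistency check is the Grothendieck-ring identity $[C]=\L^{\,2n-\dim C'}[C']$ of \S\ref{sect:affbdlWR}: its exponent equals $2n-\dim C' = 2\sum_{j=1}^n(j-1)\alpha_j$, twice the shift appearing in the order formula — exactly the comparison one expects between the rank of the affine bundle $C\to C'$ and the order of a Calabi-Yau volume form under a tamely ramified base change. Once this sign is pinned down, the Corollary is a one-line substitution into Lemma~\ref{lem:ordcond}.
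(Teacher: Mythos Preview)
Your approach is exactly the paper's: apply Lemma~\ref{lem:ordcond} to the Weil-restricted model $\mathfrak{X}^{[n]}$ and substitute the two formulas from the preceding Lemma. The paper's proof is the one-line computation
\[
\ord_{z}(\omega^{[n]})=\frac{\ord_{\pi(z)}(\omega(\tilde{n})^{[n]})-c\bigl(\Hilb^{n}(\mathfrak{X}(\tilde{n})_{k,\sm}),[Z]\bigr)}{\tilde{n}}=\sum_{j=1}^{n}\Bigl(-(j-1)\alpha_j+\ord_{\pi^j(z)}(\omega(j)^{[\alpha_j]})\Bigr),
\]
which is precisely what you outline.

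The one genuine issue is your last paragraph. Lemma~\ref{lem:ordcond} reads $\ord_C(\omega)=\bigl(\ord_{C'}(\omega_L)-c(\mathfrak{X}'_k,C')\bigr)/[L:K]$, so the conductor is \emph{subtracted}, and a straightforward substitution yields $-(j-1)\alpha_j$, not $+(j-1)\alpha_j$. This is in fact what the paper's own proof obtains, and it is the sign that is used downstream: in the proof of Theorem~\ref{thm:implicitZ} one has $\L^{-\ord_z(\omega^{[n]})}=\prod_j \L^{(j-1)\alpha_j-\ord_{\pi^j(z)}(\omega(j)^{[\alpha_j]})}$, which forces the minus sign. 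The displayed formula in the Corollary statement carries a typographical sign error; your ``consistency check'' via the affine-bundle rank $2\sum_j(j-1)\alpha_j$ does not pin down this sign, and you should not override Lemma~\ref{lem:ordcond} to match the misprinted statement.
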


\begin{proof}
As a direct consequence of Lemma \ref{lem:ordcond} and of the previous lemma we get:
$$\displaystyle \ord_{z}(\omega^{[n]})=\frac{\ord_{\pi(z)}(\omega(\tilde{n})^{[n]})-c\left(\Hilb^{n}(\mathfrak{X}(\tilde{n})_{k, \sm}),[Z] \right)}{\tilde{n}}= \sum_{j=1}^n \left( -(j-1)\alpha_j+\ord_{\pi^j(z)}(\omega(j)^{[\alpha_j]}) \right) \punto $$
\end{proof}

\subsection{Motivic integral}

\label{subsect:compmotint}

\subsubsection{} We keep the convention on $\mathcal{R}$ being one of the three rings $\mathcal{M}_k$, \\$\mathcal{M}_k\left[(\L^r-1)^{-1}\colon 0<r\in \N \right]$ or $\widehat{\mathcal{M}_k}$. We are now ready to perform the main computation of the manuscript; by using the models we constructed above we are able to compute a generating function for the motivic integrals of all the Hilbert schemes of points of a surface with trivial canonical bundle. More precisely the formula we are going to prove is the content of the following proposition:

\begin{theorem}
\label{thm:implicitZ}
The following identity holds true in $\mathcal{R}[[q]]$ if $\chara k=0$, while it holds true in $\mathcal{R}[q]/(q^p)$ if $\chara k=p>0$:
$$ \sum_{n\geq 0} \left( \int_{\Hilb^n(X)} \omega^{[n]} q^n \right)  =  \prod_{m\geq 1} \left( \left(  1-\L^{m-1}q^m\right)^{-\left( \int_{X(m)}  \omega(m) \right)/\mu_m} \right)   \punto $$
\end{theorem}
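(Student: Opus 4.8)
The plan is to compute the left hand side directly from the weak Néron model $\mathfrak{X}^{[n]}$ of Proposition \ref{prop:wNmHilbn} and then recognise the answer as the power-structure exponential on the right. Both sides are power series in $q$, so it suffices to check the equality modulo $q^{n+1}$ for each $n$; for this one may keep the single semistable model $\mathfrak{X}(\tilde n)$ fixed, and only the factors with $m\le n$ on the right contribute. Moreover, by Proposition \ref{prop:WreswNm} and Lemma \ref{lem:compWres} the weak Néron model in the general case is obtained from the semistable one ($a=1$) by a further Weil restriction along $\Delta(\tilde n)/\Delta$, under which both sides transform compatibly with the quotient map $[-]\mapsto[-]/\mu$; so I will assume $a=1$, i.e. that $X$ already has semistable reduction over $K$ (in characteristic $p$ the whole construction only makes sense for $n<p$, which is why the identity is stated modulo $q^p$).

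Under this assumption $\int_{\Hilb^n(X)}\omega^{[n]}=\sum_{C\in\pi_0(\mathfrak{X}^{[n]}_k)}[C]\,\L^{-\ord_C(\omega^{[n]})}$. Proposition \ref{prop:partstrat} writes $\Hilb^n(\mathfrak{X}(\tilde n)_{k,\sm})^{\mu_{\tilde n}}=\bigsqcup_{\alpha\dashv n}\prod_{j}\Hilb^{\alpha_j}(Y(j)/\mu_j)$, while the affine-bundle description of \S\ref{sect:affbdlWR} together with the computation of $\ord_z(\omega^{[n]})$ carried out above gives, for a component $C$ lying over $C'=\prod_j C'_j$ in the stratum $\alpha$, the relation $[C]\,\L^{-\ord_C(\omega^{[n]})}=\prod_j\L^{(j-1)\alpha_j}[C'_j]\,\L^{-\ord_{C'_j}(\omega(j)^{[\alpha_j]})}$. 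Summing over $C'$ and then over $n$ one obtains
$$
\sum_{n\ge0}\Big(\int_{\Hilb^n(X)}\omega^{[n]}\Big)q^n=\prod_{j\ge1}H_j\big(\L^{\,j-1}q^{\,j}\big),\qquad H_j(t):=\sum_{r\ge0}\Big(\sum_{D\in\pi_0\Hilb^r(Y(j)/\mu_j)}[D]\,\L^{-\ord_D(\omega(j)^{[r]})}\Big)t^r .
$$
The generic point of each component of a Hilbert scheme of a smooth surface is a reduced $0$-dimensional subscheme, and the stalk of the canonical sheaf there is the tensor product of the stalks at the support, so each $\ord_D(\omega(j)^{[r]})$ is a sum over the support; hence $H_j$ is the Hilbert-scheme generating series of the smooth (algebraic-space) surface $Y(j)/\mu_j$, weighted by the locally constant function $\ord_{-}(\omega(j))$.

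Now I evaluate $H_j$ with the motivic Göttsche formula $\sum_{r\ge0}[\Hilb^r(S)]\,t^r=\prod_{k\ge1}(1-\L^{k-1}t^k)^{-[S]}$. Decomposing $Y(j)/\mu_j$ into its connected components $V_i$, on each of which $\omega(j)$ has a constant order $\nu_i=\ord_{V_i}(\omega(j))$, and running the formula on $V_i$ with $t$ replaced by $\L^{-\nu_i}t$, one gets $H_j(t)=\prod_i\prod_{k\ge1}(1-\L^{\,k-1-k\nu_i}t^k)^{-[V_i]}$. Substituting $t=\L^{\,j-1}q^{\,j}$, setting $m=jk$, and using Remark \ref{rem:identwNm} in the form $\ord_{V_i}(\omega(j))=\tfrac{j}{\tilde n}\,\nu_0(\widetilde V_i)$ — where $\nu_0$ is the order of $\omega(\tilde n)$ on the corresponding component of $\mathfrak{X}(\tilde n)_{k,\sm}$ — the left hand side becomes
$$
\prod_{j\ge1}H_j\big(\L^{\,j-1}q^{\,j}\big)=\prod_{m\ge1}\prod_{j\mid m}\prod_{V_i\subseteq Y(j)/\mu_j}\big(1-\L^{\,m-1-\frac{m}{\tilde n}\nu_0(\widetilde V_i)}q^{\,m}\big)^{-[V_i]} .
$$
On the other side, $N_m:=\int_{X(m)}\omega(m)/\mu_m$ is computed through the weak Néron model $\Res_{\Delta(\tilde n)/\Delta(m)}(\mathfrak{X}(\tilde n)_{\sm})^{\mu_{\tilde n/m}}$ of $X(m)$, whose special fibre is $\bigsqcup_{d\mid m}Y(d)$ with $\ord_{Y(d)}(\omega(m))=\tfrac{m}{\tilde n}\nu_0$; using multiplicativity of the power structure in the exponent together with the identity $(1-s)^{-\L^{-a}Y}=(1-\L^{-a}s)^{-Y}$ (valid since $\Sym^r(\L^{-a}Y)=\L^{-ra}\Sym^r(Y)$) one rewrites $\prod_{m\ge1}(1-\L^{\,m-1}q^{\,m})^{-N_m}$ as a product of exactly the same shape. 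The two products agree term by term: for fixed $m$ and a fixed divisor $j=d$ of $m$ the subgroup $\mu_{m/d}$ acts trivially on $Y(d)$, so $Y(d)/\mu_m=Y(d)/\mu_d$, and the components together with their $\nu_0$-weights contributed for this divisor coincide on both sides. This proves the identity.

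The main obstacle is precisely this last reorganisation: one must keep track of the powers of $\L$ coming at once from the affine bundles, from the conductor, from the $\L^{k-1}$ of Göttsche's formula and from the substitution $t\mapsto\L^{\,j-1}q^{\,j}$, and check that the equivariant quotient classes $[Y(d)/\mu_m]$ assemble correctly across the infinite product. A secondary point to spell out is that all the power-structure manipulations (the extensions of $\Sym^r$ and of Göttsche's identity) are legitimate over each of the three rings $\mathcal{R}=\mathcal{M}_k$, $\mathcal{M}_k[(\L^r-1)^{-1}\colon 0<r\in\N]$, $\widehat{\mathcal{M}_k}$, and that replacing the algebraic space $Y(j)/\mu_j$ by a stratification into schemes does not affect anything at the level of Grothendieck classes.
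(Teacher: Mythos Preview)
Your argument is correct and follows essentially the same strategy as the paper: pass to the weak N\'eron model $\mathfrak{X}^{[n]}$, use the stratification of Proposition~\ref{prop:partstrat} together with the order formula to factor the generating series as $\prod_{j}H_j(\L^{j-1}q^{j})$, then apply a G\"ottsche-type identity and regroup via $m=jk$. The paper carries out the G\"ottsche step through an inline lemma that rewrites $\int_{\Hilb^{\alpha}(Y)}\nu^{[\alpha]}$ as a partition sum of $\int_{\Sym^{\beta_l}(Y)}\nu'^{[l\beta_l]}$ (using the DT decomposition of $[\Hilb^{\alpha}(Y)]$), and only at the very end passes to the power-structure expression; you instead invoke the generating-series form of G\"ottsche componentwise on $Y(j)/\mu_j$ and stay with the infinite product throughout. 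These are two packagings of the same computation, and your final matching $(1-\L^{m-1}q^m)^{-[V_i]\L^{-\nu}}=(1-\L^{m-1-\nu}q^m)^{-[V_i]}$ together with $Y(d)/\mu_m=Y(d)/\mu_d$ is exactly what is implicit in the paper's identification of $\int_{\Sym^{r_m}\bigl((\sqcup_{j\mid m}Y(j))/\mu_m\bigr)}$ with $\Sym^{r_m}(N_m)$.

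One small point: your reduction to the case $a=1$ is unnecessary and the sentence ``both sides transform compatibly with the quotient map'' would need more work to be made precise. In fact the paper does not perform this reduction in the proof of the theorem; it simply runs the whole computation with the model $\mathfrak{X}(\tilde n)$ for the full $\tilde n=a\cdot\mathrm{lcm}(1,\dots,n)$, and your argument goes through verbatim in that generality. Also, your justification of $Y(d)/\mu_m=Y(d)/\mu_d$ should refer to the quotient group $\mu_m=\mu_{\tilde n}/\mu_{\tilde n/m}$ (as in the proof of Proposition~\ref{prop:partstrat}) rather than to the subgroup $\mu_{m/d}$; with that interpretation the claim is immediate since $\mu_{\tilde n/d}/\mu_{\tilde n/m}$ already acts trivially on $Y(d)$.
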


\begin{corollary}
\label{cor:explint}
Assume that either $\chara k=0$ or $\chara k> n$, then the following equation holds: 
$$ \int_{\Hilb^n(X)} \omega^{[n]} = \sum_{\alpha \dashv n} \prod_{j=1}^\infty  \left(  \L^{(j-1)\alpha_j} \Sym^{\alpha_j}\left( \left( \int_{X(j)}  \omega(j) \right)/\mu_j \right) \right) \punto$$
\end{corollary}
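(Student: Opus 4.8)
The plan is to deduce Corollary~\ref{cor:explint} from Theorem~\ref{thm:implicitZ} by expanding each factor of the infinite product on the right-hand side and reading off the coefficient of $q^n$. Throughout, write $\beta_m \coloneqq \left(\int_{X(m)}\omega(m)\right)/\mu_m \in \mathcal{R}$. The first step is to establish, for every $m\ge 1$, the expansion
$$ \left(1-\L^{m-1}q^m\right)^{-\beta_m} \;=\; \sum_{r\ge 0}\L^{(m-1)r}\,\Sym^{r}(\beta_m)\,q^{mr} $$
in $\mathcal{R}[[q]]$ (resp.\ in $\mathcal{R}[q]/(q^p)$ when $\chara k=p>0$). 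To see this I would iterate the identity $(1-\L t)^{-\alpha}=(1-t)^{-\L\alpha}$ recorded in \S \ref{sect:studysym} to get $\left(1-\L^{m-1}q^m\right)^{-\beta_m}=\left(1-q^m\right)^{-\L^{m-1}\beta_m}$; then expand the right-hand side via the definition $\Sym^r(\gamma)=[t^r](1-t)^{-\gamma}$, obtaining $\sum_{r\ge 0}\Sym^r(\L^{m-1}\beta_m)\,q^{mr}$; and finally apply $\Sym^r(\L^{a}\gamma)=\L^{ar}\Sym^r(\gamma)$, also from \S \ref{sect:studysym}. The only point requiring care is that these two identities, which \S \ref{sect:studysym} first records for classes in $K_0(\Var_k)$, remain valid for the exponent $\beta_m$, which a priori lies in $\mathcal{M}_k$, in $\mathcal{M}_k[(\L^r-1)^{-1}]$ or in $\widehat{\mathcal{M}_k}$; but this is precisely how the operators $\Sym^r$ were extended to those three rings there, so no new input is needed.

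Next I would multiply these expansions over all $m\ge 1$. The $m$-th factor is congruent to $1$ modulo $q^m$ (and, in characteristic $p$, is literally $1$ once $m\ge p$, since then $q^m=0$), so the product is a well-defined element of the relevant coefficient ring, and
$$ [q^n]\prod_{m\ge 1}\Bigl(\sum_{r\ge 0}\L^{(m-1)r}\Sym^{r}(\beta_m)\,q^{mr}\Bigr) \;=\; \sum_{\substack{(r_1,r_2,\dots)\colon\\ \sum_{m}m\,r_m=n}}\ \prod_{m\ge 1}\L^{(m-1)r_m}\Sym^{r_m}(\beta_m). $$
A tuple $(r_1,r_2,\dots)$ with $\sum_m m\,r_m=n$ is precisely a partition $\alpha\dashv n$ with $\alpha_m=r_m$; for such $\alpha$ one has $\alpha_j=0$ for $j>n$, and since $\Sym^0=1$ and $\L^0=1$ the product over $m\ge 1$ is a finite product that may harmlessly be written as $\prod_{j=1}^{\infty}$. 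On the other hand, by Theorem~\ref{thm:implicitZ} the left-hand side above equals $[q^n]\sum_{n\ge 0}\left(\int_{\Hilb^n(X)}\omega^{[n]}q^n\right)=\int_{\Hilb^n(X)}\omega^{[n]}$, where the hypothesis $\chara k=0$ or $\chara k>n$ is exactly what guarantees that the coefficient of $q^n$ survives the truncation modulo $q^p$. Combining the two computations gives
$$ \int_{\Hilb^n(X)}\omega^{[n]} \;=\; \sum_{\alpha\dashv n}\prod_{j=1}^{\infty}\Bigl(\L^{(j-1)\alpha_j}\,\Sym^{\alpha_j}\bigl(\beta_j\bigr)\Bigr), $$
which is the asserted identity after unwinding $\beta_j=\left(\int_{X(j)}\omega(j)\right)/\mu_j$.

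I do not anticipate any real obstacle here: all of the substance is contained in Theorem~\ref{thm:implicitZ}, and what remains is bookkeeping. The most delicate step is the very first one, namely checking that the $\L$-twist commutes with $\Sym^\bullet$ in the expected way and that this stays valid once the exponent is allowed to live in $\mathcal{M}_k[(\L^r-1)^{-1}]$ or $\widehat{\mathcal{M}_k}$ rather than in $K_0(\Var_k)$; this is the point at which the argument must appeal carefully to the constructions of \S \ref{sect:studysym}. The rearrangement of the $q$-adically convergent product and the term-by-term extraction of the coefficient of $q^n$ are then entirely routine in each of the three coefficient rings.
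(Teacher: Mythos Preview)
Your proposal is correct and follows essentially the same route as the paper's own proof: expand each factor $(1-\L^{m-1}q^m)^{-\beta_m}$ as $\sum_{r\ge 0}\L^{(m-1)r}\Sym^r(\beta_m)q^{mr}$, multiply the resulting series, and read off the coefficient of $q^n$ as a sum over partitions. The paper is slightly more terse---it states the single-factor expansion without the intermediate $\L$-twist step you spell out---but the argument is the same.
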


\begin{proof}
Since 
$$ \left(  1-\L^{m-1}q^m\right)^{-\left( \int_{X(m)}  \omega(m) \right)/\mu_m}= \sum_{l=0}^\infty \L^{(m-1)l}\Sym^{l}\left( \left( \int_{X(m)}  \omega(m) \right)/\mu_m \right) q^{ml} \virgola $$
it follows that, for $\alpha=(\alpha_1,\alpha_2, \dots)$ sequence such that $\alpha_j=0$ for $j\gg 1$, one has that
$$ \deg_q\left(\prod_{j=1}^\infty  \left(  \L^{(j-1)\alpha_j} \Sym^{\alpha_j}\left( \left( \int_{X(j)}  \omega(j) \right)/\mu_j \right) q^{j\alpha_i} \right)\right) = \sum_{j=1}^\infty j\alpha_j \punto $$ 
We get the desired result after identifying the coefficients of $q^n$ from Theorem \ref{thm:implicitZ}.
\end{proof}

\begin{proof}[Proof of Theorem \ref{thm:implicitZ}] We begin our computation using some identities we proved in the previous section.

\begin{align}
\sum_{n\geq 0} \left( \int_{\Hilb^n(X)} \omega^{[n]} q^n \right)  = & \sum_{n\geq 1} \left( \int_{\mathfrak{X}^{[n]}_k} \L^{-\ord_{z}(\omega^{[n]})} dz  q^n \right)  \nonumber \\
= & \sum_{n\geq 1} \left( \sum_{\alpha \dashv n} \left( \int_{\mathfrak{X}^{[n]}_{k, \alpha}} \L^{-\ord_{z}(\omega^{[n]})} dz \right)  q^n \right)  \nonumber \\
= & \sum_{n\geq 1} \left( \sum_{\alpha \dashv n} \left( \int_{\mathfrak{X}^{[n]}_{k, \alpha}}  \prod_{j\geq 1} \left( \L^{(j-1)\alpha_j -\ord_{\pi^j(z)}(\omega^{[\alpha_j]}(j))} q^{j\alpha_j} \right) dz \right) \right)   \nonumber \\
= & \sum_{\alpha\in \N^{\oplus \N_{\geq 1}}} \left( \prod_{j\geq 1}  \left( \L^{(j-1)\alpha_j} q^{j\alpha_j} \int_{\Hilb^{\alpha_j}(Y(j)/\mu_j)}  \L^{-\ord_{z}(\omega(j)^{[\alpha_j]})} dz   \right) \right)   \nonumber \\
= &  \prod_{j\geq 1}  \left( \sum_{\alpha_j\geq 0} \left( \L^{(j-1)\alpha_j} q^{j\alpha_j}\int_{\Hilb^{\alpha_j}(Y(j)/\mu_j)}  \L^{-\ord_{z}(\omega(j)^{[\alpha_j]})} dz   \right) \right) \punto     \nonumber 
\end{align}

\begin{lemma}
Let $Y\rightarrow \Spec k$ be a smooth surface endowed with a locally constant function $\nu\colon |Y|\rightarrow \mathcal{R}$. Suppose that functions $\nu^{[n]}\colon |\Hilb^n(Y)|\rightarrow \mathcal{R}$ and $\nu'^{[n]}\colon |\Sym^n(Y)|\rightarrow \mathcal{R}$ are defined in such a way that, for a given subscheme $Z\subseteq Y$ of length $n$ we have $\nu^{[n]}(Z)=\prod_{p\in \supp(Z)}\nu(p)^{\length(\mathcal{O}_{Z,p})}$ and $\nu^{[n]}=\nu'^{[n]}\circ \pi_n$, where $\pi_n\colon \Hilb^n(Y)\rightarrow \Sym^n(Y)$ is the Hilbert-Chow morphism.
Then, for an arbitrary natural number $\alpha\in\N$, the following identity holds:
$$ \int_{\Hilb^{\alpha}(Y)}  \nu^{[\alpha]}(z) dz= \sum_{\beta \dashv \alpha} \left( \prod_{l\geq 1}\left( \L^{(l-1)\beta_l}\int_{\Sym^{\beta_l}(Y)}  \nu'^{[l\beta_l]}(z) dz  \right) \right) \punto $$ 
\end{lemma}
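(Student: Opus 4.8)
The plan is to package both sides into generating series in an auxiliary variable $q$, reduce to a constant weight by multiplicativity, and then recognise the resulting statement as the (motivic) Göttsche formula.

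First I would observe that the right-hand side of the asserted identity is precisely the coefficient of $q^\alpha$ in
$$ R_Y(q)\coloneqq\prod_{m\geq 1}\left(\sum_{k\geq 0}\left(\int_{\Sym^k(Y)}\nu'^{[mk]}(z)\,dz\right)\L^{(m-1)k}q^{mk}\right)\virgola $$
where $\nu'^{[mk]}$ is read along the multiplication-by-$m$ map $\Sym^k(Y)\to\Sym^{mk}(Y)$, while the left-hand side is the coefficient of $q^\alpha$ in $P_Y(q)\coloneqq\sum_{n\geq 0}\bigl(\int_{\Hilb^n(Y)}\nu^{[n]}(z)\,dz\bigr)q^n$; so it suffices to prove $P_Y(q)=R_Y(q)$. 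Since $\nu$ is locally constant on the finite-type scheme $Y$ it is constant on each of the finitely many connected components, so write $Y=\bigsqcup_s Y_s$ with $\nu|_{Y_s}\equiv v_s$. The decompositions $\Hilb^n\bigl(\bigsqcup_s Y_s\bigr)=\bigsqcup_{\sum n_s=n}\prod_s\Hilb^{n_s}(Y_s)$ and $\Sym^k\bigl(\bigsqcup_s Y_s\bigr)=\bigsqcup_{\sum k_s=k}\prod_s\Sym^{k_s}(Y_s)$, together with the multiplicativity of the weights $\nu^{[n]}$ and $\nu'^{[mk]}$ along these products, show that $P_Y$ and $R_Y$ are both multiplicative under $\nu$-respecting disjoint unions; hence it is enough to treat each $(Y_s,v_s)$ separately, i.e. I may assume $Y$ connected and $\nu\equiv v$ constant.

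In the constant case, every length-$n$ subscheme $Z\subseteq Y$ satisfies $\sum_p\length(\mathcal{O}_{Z,p})=n$, so $\nu^{[n]}$ is the constant $v^n$ on all of $\Hilb^n(Y)$ and $\nu'^{[mk]}$ the constant $v^{mk}$ on $\Sym^{mk}(Y)$. Thus $P_Y(q)=\sum_n[\Hilb^n(Y)](vq)^n$ and $R_Y(q)=\prod_m\sum_k[\Sym^k(Y)]\L^{(m-1)k}(vq)^{mk}$, and $P_Y=R_Y$ follows by applying the ring homomorphism $t\mapsto vq$ to the identity in $K_0(\Var_k)[[t]]$
$$ \sum_{n\geq 0}[\Hilb^n(Y)]\,t^n=\prod_{m\geq 1}\left(\sum_{k\geq 0}\L^{(m-1)k}[\Sym^k(Y)]\,t^{mk}\right)\virgola $$
which is the motivic Göttsche formula. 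To prove the latter I would stratify the Hilbert–Chow morphism $\Hilb^n(Y)\to\Sym^n(Y)$ by the partition type of the underlying $0$-cycle: over the stratum of type $(1^{\beta_1}2^{\beta_2}\cdots)$ it is a piecewise trivial fibration with fibre $\prod_l\bigl(\Hilb^l(\A^2)_0\bigr)^{\beta_l}$, where $\Hilb^l(\A^2)_0$ is the punctual Hilbert scheme of the affine plane at the origin and one uses that $Y$ is étale-locally $\A^2$. This yields $\sum_n[\Hilb^n(Y)]t^n=\bigl(\sum_{\mu\geq 0}[\Hilb^\mu(\A^2)_0]\,t^\mu\bigr)^{[Y]}$ in the power structure of \cite{GLM}. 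Combining this with the classical computation $\sum_{\mu\geq 0}[\Hilb^\mu(\A^2)_0]\,t^\mu=\prod_{m\geq 1}(1-\L^{m-1}t^m)^{-1}$, the power-structure axioms $(FG)^X=F^XG^X$ and $F(t)^X|_{t\to t^m}=F(t^m)^X$, the identity $(1-\L^{m-1}t)^{-1}=(1+t+t^2+\cdots)^{[\A^{m-1}]}$, and the formula $\Sym^k(\L^{m-1}\alpha)=\L^{(m-1)k}\Sym^k(\alpha)$ recorded in \S\ref{sect:studysym}, one rewrites $\bigl(\prod_m(1-\L^{m-1}t^m)^{-1}\bigr)^{[Y]}=\prod_m(1-\L^{m-1}t^m)^{-[Y]}=\prod_m\sum_k\L^{(m-1)k}[\Sym^k(Y)]\,t^{mk}$, as wanted. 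When $\chara k=p>0$ only the congruence modulo $q^p$ is needed, i.e. only $\alpha<p$, so only punctual Hilbert schemes $\Hilb^l(\A^2)_0$ with $l<p$ intervene and the same argument applies verbatim.

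The only genuinely non-formal ingredient is the last one: expressing the Hilbert-scheme series as a power-structure expression and, above all, the local computation $\sum_\mu[\Hilb^\mu(\A^2)_0]t^\mu=\prod_m(1-\L^{m-1}t^m)^{-1}$, which encodes the cell decompositions of the punctual Hilbert schemes of the plane; everything else is bookkeeping with generating series and the elementary properties of $\Sym^\bullet$. A point that needs care — and the reason for the detour through a constant weight — is that $\nu$ takes values in $\mathcal{R}$ rather than in $K_0(\Var_k)$, so the power structure, which lives over $K_0(\Var_k)$, cannot be applied to the weighted series directly; the reduction carried out in the first two steps is exactly what makes the weights scalar and lets the power structure take over.
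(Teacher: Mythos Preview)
Your proof is correct and follows essentially the same route as the paper's: reduce to constant $\nu$ via the connected-component decomposition, then invoke the motivic Göttsche identity $[\Hilb^\alpha(Y)]=\sum_{\beta\dashv\alpha}\prod_l\L^{(l-1)\beta_l}[\Sym^{\beta_l}(Y)]$ (which the paper simply cites from \cite{Ric}). The only differences are presentational: you package both sides as generating series and use multiplicativity under disjoint union, whereas the paper writes out an explicit induction peeling off one component at a time, and you additionally sketch the power-structure proof of the Göttsche formula rather than quoting it.
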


\begin{proof}
We first suppose that $Y$ is connected and, thus, $\nu\equiv \lambda\in \mathcal{R}$ is constant. Thus we simply have that 
$$ \int_{\Hilb^{\alpha}(Y)}  \nu^{[\alpha]}(z) dz= \lambda^\alpha [\Hilb^\alpha(Y)] \punto $$
It follows from a well known result in DT theory, for instance \cite[\S 2.2.3]{Ric}, that $$ [\Hilb^\alpha(Y)]= \sum_{\beta \dashv \alpha} \left( \prod_{l\geq 1}\left( \L^{(l-1)\beta_l}[\Sym^{\beta_l}(Y)] \right) \right) \virgola $$
thus we deduce the desired statement, at least when $Y$ is connected. 

Now suppose that $C\subseteq Y$ is a connected component and that the statement holds for $Y\backslash C$. 

Using the fact that $\Hilb^\alpha(Y) = \bigsqcup_{j=0}^{\alpha}\left( \Hilb^{\alpha-j}(Y\backslash C)\times \Hilb^j(C)\right)$, we deduce that 
\begin{align*}
\int_{\Hilb^{\alpha}(Y)}  \nu^{[\alpha]}(z) dz = &  \sum_{j=0}^{\alpha} \left( \int_{\Hilb^{\alpha-j}(Y\backslash C)} \nu^{[\alpha-j]}(z) dz\right)  \cdot \left(\int_{\Hilb^j(C)} \nu^{[j]} dz\right) \\
= &  \sum_{j=0}^{\alpha} \left( \sum_{\delta^j \dashv \alpha-j} \left( \prod_{l\geq 1}\left( \L^{(l-1)\delta^j_l}\int_{\Sym^{\delta^j_l}(Y\backslash C)}  \nu'^{l[\delta^j_l]}(z) dz  \right) \right) \cdot  \right. \\
& \cdot \left.   \sum_{\gamma^j \dashv j} \left( \prod_{l\geq 1}\left( \L^{(l-1)\gamma^j_l}\int_{\Sym^{\gamma^j_l}(C)}  \nu'^{l[\gamma^j_l]}(z) dz  \right) \right) \right) \\
= &  \sum_{j=0}^{\alpha} \left( \sum_{\substack{{\delta^j \dashv \alpha-j}\\{\gamma^j \dashv j}}} \left( \prod_{l\geq 1}\left( \L^{(l-1)(\delta^j_l+\gamma^j_l)}\int_{\Sym^{\beta^j_l}(Y\backslash C)\times \Sym^{\gamma^j_l}(C)}  \nu'^{l[\delta^j_l+\gamma^j_l]}(z) dz  \right) \right)  \right) \\
= &  \sum_{\beta \dashv \alpha} \left( \prod_{l\geq 1}\left( \L^{(l-1)\beta_l}\sum_{i=0}^{\beta_l} \int_{\Sym^{\beta_l-i}(Y\backslash C)\times \Sym^{i}(C)}  \nu'^{l[\beta_l]}(z) dz  \right) \right)  \\
= &  \sum_{\beta \dashv \alpha} \left( \prod_{l\geq 1}\left( \L^{(l-1)\beta_l} \int_{\Sym^{\beta_l}(Y)}  \nu'^{l[\beta_l]}(z) dz  \right) \right)  \virgola
\end{align*}
which concludes the proof.
\end{proof}

 We plug the lemma above in our chain of equalities using $Y=Y(j)/\mu_j$, $\nu\coloneqq \L^{-\ord_z(\omega(j))}$, recalling also that $l\ord_{z}(\omega(j))=\ord_z(\omega(jl))$ and that $Y(j)$ naturally embeds in the central fibre of some weak Néron model of $X(lj)$ (as in Remark \ref{rem:identwNm} with $d=\nicefrac{\tilde{n}}{lj}$), we get:

\begin{align*}
\label{eqn:formulan}
\sum_{n\geq 0} & \left( \int_{\Hilb^n(X)} \omega^{[n]} q^n \right) =  \\
 &  = \prod_{j\geq 1}  \left( \sum_{\alpha_j\geq 0} \left( \L^{(j-1)\alpha_j} \sum_{\beta^j \dashv \alpha_j} \left( q^{j\alpha_j} \prod_{l\geq 1}\left( \L^{(l-1)\beta^j_l}\int_{\Sym^{\beta^j_l}(Y(j)/\mu_j)}  \L^{-\ord_{z}(\omega(lj)^{[\beta_l^j]})} dz  \right) \right) \right) \right)     \nonumber \\
%
%
%
%
%
%
 & =   \prod_{j\geq 1}   \left( \sum_{\beta^j \in \N^{\oplus \N_{\geq 1}}} \left( \prod_{l\geq 1}\left( \L^{(jl-1)\beta_l^j} q^{jl\beta_l^j}\int_{\Sym^{\beta^j_l}(Y(j)/\mu_j)}  \L^{-\ord_{z}(\omega(lj)^{[\beta_l^j]})} dz  \right) \right) \right)      \nonumber  \\
%
%
%
%
%
 & =  \prod_{j,l\geq 1}  \left(  \sum_{\beta^j_l\geq 0} \left(\L^{(jl-1)\beta_l^j} q^{jl\beta_l^j}\int_{\Sym^{\beta^j_l}(Y(j)/\mu_j)}  \L^{-\ord_{z}(\omega(lj)^{[\beta_l^j]})} dz  \right) \right)     \punto   \\
\end{align*}

 Recalling that, in the sum above, there is only a finite number of nonvanishing coefficients of $q^n$, for every positive integer $n$, we are allowed to group such summands in a different order; since the map
\begin{align*}
\N_+\times \N_+ & \rightarrow \N_+\times \N_+ \\
(j,l) & \mapsto (j\cdot l, j)
\end{align*}
is injective and its image is $\{(m,j)\colon j|m\}$, after the substitution $\lambda^m_j\coloneqq \beta^j_l$, we get the equivalent expression:
\begin{align}
\sum_{n\geq 0} & \left( \int_{\Hilb^n(X)} \omega^{[n]} q^n \right)  =   \nonumber \\
& = \prod_{m\geq 1}\left( \prod_{j|m}  \left(  \sum_{\lambda^m_j\geq 0} \left(\L^{(m-1)\lambda_j^m} q^{m\lambda_j^m}\int_{\Sym^{\lambda_j^m}(Y(j)/\mu_j)}  \L^{-\ord_{z}(\omega(m)^{[\lambda_j^m]})} dz  \right) \right) \right)      \nonumber  \\
& =  \prod_{m\geq 1}  \left(  \sum_{\lambda^m\in \N^{\Div(m)}} \left( \left(\L^{(m-1)}q^m\right)^{\sum_{j|m}\lambda_j^m} \prod_{j|m} \left(\int_{\Sym^{\lambda_j^m}(Y(j)/\mu_j)}  \L^{-\ord_{z}(\omega(m)^{[\lambda_j^m]})} dz  \right) \right) \right)      \nonumber  \\
&  =  \prod_{m\geq 1}  \left(  \sum_{r_m\geq 0} \left( \left(\L^{(m-1)}q^m\right)^{r_m} \int_{\Sym^{r_m}\left(\left(\sqcup_{j|m} Y(j)\right)/\mu_m \right)}  \L^{-\ord_{z}(\omega(m)^{[r_m]})} dz  \right) \right)         \nonumber  \\
&  =  \prod_{m\geq 1}  \left(  \sum_{r_m\geq 0} \left( \left(\L^{(m-1)}q^m\right)^{r_m} \Sym^{r_m}\left(\int_{\left(\sqcup_{j|m} Y(j)\right)/\mu_m}  \L^{-\ord_{z}(\omega(m))} dz\right)  \right) \right)         \nonumber      \\
&  =  \prod_{m\geq 1}  \left(  \sum_{r_m\geq 0} \left( \left(\L^{(m-1)}q^m\right)^{r_m} \Sym^{r_m}\left(\nicefrac{\left( \int_{X(m)}  \omega(m) \right)}{\mu_m} \right)  \right) \right)                 \nonumber     \\
&  = \prod_{m\geq 1} \left( \left(  1-\L^{m-1}q^m\right)^{-\left( \int_{X(m)}  \omega(m) \right)/\mu_m} \right)   \punto 
\end{align}

\end{proof}

\subsubsection{}  Applying this identity to every coefficient of the zeta function $Z_{X,\omega}(T)$, we obtain a formula for the motivic zeta function for its Hilbert schemes of points:

\begin{theorem}
Assume that either $\chara k=0$ or $\chara k> n$, then the following equation holds: 
\begin{equation}
\label{eqn:explZ}
Z_{\Hilb^n(X), \omega^{[n]}} = \sum_{\alpha \dashv n} \prod_{j=1}^\infty  \left(  \L^{(j-1)\alpha_j} \Sym^{\alpha_j}\left( Z_{X(j), \omega(j)} \right) \right)  \punto
\end{equation}
\end{theorem}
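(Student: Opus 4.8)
The plan is to deduce \eqref{eqn:explZ} from Corollary \ref{cor:explint} (equivalently, from the implicit product formula of Theorem \ref{thm:implicitZ}) by applying it to every totally ramified base change of $X$ and reading off the outcome as a power series in $T$. The starting point is a list of elementary compatibilities: $\Hilb^n(X)\times_K K(m)=\Hilb^n(X(m))$, $\omega^{[n]}(m)=\bigl(\omega(m)\bigr)^{[n]}$, $X(j)(m)=X(jm)$, $\omega(j)(m)=\omega(jm)$ and $K(j)(m)=K(jm)$, all immediate from the construction of the tower $\{\Delta(r)\}_{r}$. They say that the coefficient of $T^{m}$ in $Z_{\Hilb^n(X),\omega^{[n]}}$ equals $\int_{\Hilb^n(X(m))}\bigl(\omega(m)\bigr)^{[n]}$, and that each $X(j)$ is again a smooth Calabi--Yau surface over $K(j)$, with residue field still $k$, to which the entire construction of Section \ref{sect:motintHilb} applies, since the hypothesis $\chara k=0$ or $\chara k>n$ is stable under such base change.

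Granting this, I would apply Corollary \ref{cor:explint} with $X$ replaced by $X(m)$, for every $m$ coprime to $\chara k$, to get
\[
\int_{\Hilb^n(X(m))}\bigl(\omega(m)\bigr)^{[n]}\;=\;\sum_{\alpha\dashv n}\;\prod_{j\geq 1}\;\L^{(j-1)\alpha_j}\,\Sym^{\alpha_j}\!\Bigl(\bigl({\textstyle\int_{X(jm)}\omega(jm)}\bigr)/\mu_j\Bigr),
\]
then multiply by $T^{m}$ and sum over $m$. Since $\Sym^{r}$ acts on a power series by definition coefficientwise, and since the product over $j$ in \eqref{eqn:explZ} is a product of power series in $T$, the identity \eqref{eqn:explZ} follows once one checks the coefficientwise statement: for fixed $j$ and $\alpha_j$, summing the $j$-th local factor above against $T^{m}$ reproduces $\L^{(j-1)\alpha_j}\Sym^{\alpha_j}\bigl(Z_{X(j),\omega(j)}\bigr)$, and these local contributions multiply together correctly. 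Establishing this is where the work sits: it amounts to re-running, one level up, the bookkeeping from the proof of Theorem \ref{thm:implicitZ} --- the partition-indexed decomposition of $\Hilb^n(\mathfrak{X}(\tilde{n})_{k,\sm})^{\mu_{\tilde{n}}}$ from Proposition \ref{prop:partstrat}, the Hilbert-to-symmetric-power lemma used there, the identities $l\,\ord_z(\omega(j))=\ord_z(\omega(jl))$ together with the embedding of each $Y(j)$ into a weak Néron model of $X(jl)$ furnished by Remark \ref{rem:identwNm}, and the reindexing $(j,l)\mapsto(jl,j)$ --- now carrying along the extra grading coming from the base-change exponent, so that the factor $\bigl(1-\L^{m-1}q^m\bigr)^{-(\cdots)}$ of Theorem \ref{thm:implicitZ} turns into $\Sym^{\bullet}$ of the whole zeta function $Z_{X(j),\omega(j)}$ rather than of a single motivic integral.

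I expect the main obstacle to be precisely this synchronisation of gradings: keeping the base-change index, the Hilbert length and the partition indices aligned while the sums are reorganised, and in particular verifying that the quotient classes $\bigl(\int_{X(jm)}\omega(jm)\bigr)/\mu_j$ appearing in Corollary \ref{cor:explint} reassemble --- via Remark \ref{rem:identwNm}, the order formula of Lemma \ref{lem:ordcond} and the affine-bundle description of \S \ref{sect:affbdlWR} --- into the coefficients of $Z_{X(j),\omega(j)}$. Once these identifications are secured, the remainder is a formal manipulation of power series with coefficients in $\mathcal{R}$, and \eqref{eqn:explZ} is obtained by comparing the coefficients of each monomial $T^{m}$ (equivalently, of $q^{n}$ if one instead applies Theorem \ref{thm:implicitZ} directly to the family $X(m)$).
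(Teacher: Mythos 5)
Your strategy is exactly the paper's: its proof is a one-line application of Corollary \ref{cor:explint} to each coefficient of the zeta function, i.e.\ to $X(m)$ in place of $X$ for every admissible $m$, using precisely the base-change compatibilities $X(j)(m)=X(jm)$, $\omega^{[n]}(m)=(\omega(m))^{[n]}$ that you list. The one point to settle --- which dissolves your ``synchronisation'' worry, so there is no need to re-run the bookkeeping of Theorem \ref{thm:implicitZ} --- is that the product over $j$ in \eqref{eqn:explZ} is to be read coefficientwise (a Hadamard product, as the paper's own use of the formula in the proof of Corollary \ref{cor:polHilb} and in Proposition \ref{prop:prodfun} confirms), so the identity really is just Corollary \ref{cor:explint} for $X(m)$ read off degree by degree.
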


\begin{proof}
It follows after an application of Corollary \ref{cor:explint} to each coefficient of the zeta function.
\end{proof}


\section{Proof of the conjecture}
\label{sect:proofMC}

\subsection{Poles of the Zeta function}

\subsubsection{} Througout this section, we denote by $\overline{\mathcal{R}}$ one of the two rings $\mathcal{M}_k\left[ (\L^r-1)^{-1} \colon 0<r\in \N \right]$ or $\widehat{\mathcal{M}}_k$, while $\mathcal{R}$ will denote either $\overline{\mathcal{R}}$ or $\mathcal{M}_k$. 

The aim of this section is to study the poles of $Z_{\Hilb^n(X),\omega^{[n]}}(T)$ in terms of those of $Z_{X,\omega}$ and deduce the following:

\begin{theorem}[Monodromy conjecture for Hilbert schemes]
\label{thm:monconjHn}
Let $X$ be a surface with trivial canonical bundle satisfying the monodromy conjecture in $\overline{\mathcal{R}}$. \\
If $\chara k=0$, then the same holds for $\Hilb^{n}(X)$, $\forall n\in \N$. If $\chara k=p>0$ and $X$ admits a model as in \S \ref{subsect:constwnm}, then the monodromy conjecture in $\overline{\mathcal{R}}$ holds for $\Hilb^n(X)$, $\forall n <\chara k$.
\end{theorem}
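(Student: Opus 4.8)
The plan is to read off the poles of $Z_{\Hilb^n(X),\omega^{[n]}}$ directly from the explicit product formula \eqref{eqn:explZ} with the pole calculus of \S\ref{sect:forser}, and then to match them against the monodromy eigenvalues of $\Hilb^n(X)$ by means of the (Galois-equivariant) cohomological description of \cite{GS}. The first step is to understand the poles of the building blocks $Z_{X(j),\omega(j)}$. In the tame range one has $\Delta^\ast(j)(m)=\Delta^\ast(jm)$, hence $X(j)(m)=X(jm)$ and $\omega(j)(m)=\omega(jm)$, so that $Z_{X(j),\omega(j)}(T)=\sum_{m\ge 1}\bigl(\int_{X(jm)}\omega(jm)\bigr)T^m$ is the ``$j$-th subseries'' of $Z_{X,\omega}$; writing $Z_{X,\omega}$ in the normal form of Lemma \ref{lem:seppol} and extracting the coefficients of the $T^{jm}$ shows that every pole of $Z_{X(j),\omega(j)}$ equals $jq$ for some pole $q$ of $Z_{X,\omega}$. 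Feeding this into \eqref{eqn:explZ} and applying, term by term over the finitely many partitions $\alpha\dashv n$, Lemma \ref{lem:polestot} to each $\Sym^{\alpha_j}(Z_{X(j),\omega(j)})$ and Proposition \ref{prop:prodfun} to the product over $j$ (multiplication by a power of $\L$ creating no pole), one concludes that every pole of $Z_{\Hilb^n(X),\omega^{[n]}}$ is a sum $q_1+\dots+q_n$ of $n$ poles of $Z_{X,\omega}$, possibly repeated. In characteristic $p>0$ this computation is identical once everything is truncated modulo $q^p$, which is exactly the source of the restriction $n<\chara k$ in the statement.

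Next I would pass to monodromy eigenvalues. Assuming the monodromy conjecture in $\overline{\mathcal R}$ for $X$, each $\lambda_i:=e^{2\pi i q_i}$ is an eigenvalue of $\sigma^\ast$ on $H^\ast(X_{\overline{K}},\Q_l)$, so for any pole $Q=q_1+\dots+q_n$ of $Z_{\Hilb^n(X),\omega^{[n]}}$ the number $e^{2\pi i Q}=\lambda_1\cdots\lambda_n$ is a product of $n$ monodromy eigenvalues of $X$. It then remains to verify that every such product is a monodromy eigenvalue of $\Hilb^n(X)$. Since $\Hilb^n$ commutes with base change, $H^\ast(\Hilb^n(X)_{\overline{K}},\Q_l)=H^\ast(\Hilb^n(X_{\overline{K}}),\Q_l)$ carries the $\Gal(\overline{K}|K)$-action functorially, and the Göttsche--Soergel decomposition is equivariant; in particular $H^\ast(\Hilb^n(X_{\overline{K}}))$ contains, up to Tate twist and degree shift, a copy of $\bigl(H^\ast(X_{\overline{K}})^{\otimes n}\bigr)^{\Sigma_n}\cong H^\ast(\Sym^n(X_{\overline{K}}))$, concretely the pullback along the Hilbert--Chow resolution. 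Tate twists act trivially under the tame generator $\sigma$, so the eigenvalues occurring on this summand are precisely the (super-)symmetric products of $n$ monodromy eigenvalues of $X$. Using that a surface with $\omega_X\cong\mathcal O_X$ is either a K3 surface (where $H^1$ and $H^3$ vanish) or an abelian surface (where Poincaré duality makes the monodromy eigenvalues on $H^3(X_{\overline{K}})$ coincide with those on $H^1(X_{\overline{K}})$, supplying the multiplicities needed to realise products that involve odd classes), one sees that every such product genuinely occurs. Hence $e^{2\pi i Q}$ is a monodromy eigenvalue of $\Hilb^n(X)$, which is the assertion; the positive-characteristic case is proved verbatim, the input of \cite{GS} being available with $\Q_l$-coefficients and $n<\chara k$ ensuring that all the constructions of \S\ref{sect:motintHilb} apply.

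I expect the genuinely delicate point to be the last one: making the Göttsche--Soergel description strictly $\Gal(\overline{K}|K)$-equivariant and checking that the Koszul signs in the symmetric-power decomposition do not annihilate the eigenspace attached to a prescribed product $\lambda_1\cdots\lambda_n$ when several of the $\lambda_i$ coincide and are carried by $H^1$ or $H^3$. This is precisely where the hypothesis $\omega_X\cong\mathcal O_X$ is used, through the classification of surfaces with trivial canonical bundle and Poincaré duality on $X$. The reduction of the poles of $Z_{\Hilb^n(X),\omega^{[n]}}$ to sums of $n$ poles of $Z_{X,\omega}$, by contrast, is a formal consequence of \eqref{eqn:explZ} together with Lemmas \ref{lem:seppol}, \ref{lem:polestot} and Proposition \ref{prop:prodfun}, and Corollary \ref{cor:explint} together with Proposition \ref{prop:wNmHilbn} guarantees that the formula is legitimate in the ranges indicated.
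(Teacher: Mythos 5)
Your approach is the same as the paper's: bound the poles of $Z_{\Hilb^n(X),\omega^{[n]}}$ by sums of $n$ poles of $Z_{X,\omega}$ via \eqref{eqn:explZ} and the pole calculus of \S\ref{sect:forser} (the paper isolates exactly this as Corollary \ref{cor:polHilb}), and then exhibit an eigenvector for $\prod_j e^{2\pi i q_j}$ inside the $\Sym^n(X)$ summand of the Göttsche--Soergel decomposition. The interesting point is the Koszul-sign issue you raise. The paper's proof writes down the unsigned sum $\sum_{\rho\in\Sigma_n}v_{\rho(1)}\otimes\cdots\otimes v_{\rho(n)}$ and asserts it is a non-zero eigenvector of $H^\ast(\Hilb^n(X),\Q_l)$; but under the Künneth isomorphism $H^\ast(X^n,\Q_l)\cong H^\ast(X,\Q_l)^{\otimes n}$ the $\Sigma_n$-action is graded (signed), so the unsigned sum need not lie in the invariant subspace at all, and the correct signed symmetrisation vanishes precisely when one is forced to repeat an odd-degree eigenvector (e.g.\ $q_1=q_2$ with a one-dimensional eigenspace in odd degree). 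You are right that this is the delicate step, and the paper leaves it tacit. Your remedy through the classification of surfaces with $\omega_X\cong\mathcal{O}_X$ (K3: odd cohomology vanishes; abelian: use the $H^1$/$H^3$ duality to desymmetrise repeated odd eigenvectors) is the right kind of fix and goes beyond what is written in the paper, though the abelian branch is only sketched: one would still need to rule out a root of unity being required with multiplicity larger than its total odd-degree multiplicity. (In practice this does not arise because the Halle--Nicaise pole of an abelian surface is realised on $H^2$, so all the $v_j$ can be taken in even degree, but that fact is not invoked in your argument.) In short: same route and same main ingredients, but you have correctly identified, and partially filled, a gap that the paper's one-line non-vanishing claim glosses over.
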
 

\subsubsection{} Despite not being the aim of our discussion, we report here the following statement, which can be obtained as a byproduct of the argments we have developed so far:

\begin{proposition}
\label{prop:monconjprod}
Let $Y, Z$ be two Calabi-Yau varieties endowed with volume forms $\omega_1, \omega_2$ satisfying the monodromy conjecture in $\overline{\mathcal{R}}$. Let $\omega$ be volume forms on $Y \times Z$ the volume form $\omega\coloneqq \pr_Y^\ast \omega_1 \wedge \pr_Z^\ast \omega_2$. Then also $Y\times Z$, endowed with the volume form $\omega$, satisfies the monodromy conjecture in $\overline{\mathcal{R}}$.
\end{proposition}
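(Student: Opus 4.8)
The plan is to reduce the statement to a formal manipulation of motivic zeta functions, after which Proposition~\ref{prop:prodfun} and a Künneth-type argument for the monodromy action close the gap. First I would record that $Y\times Z$ is again Calabi--Yau: one has $\omega_{Y\times Z}\cong\pr_Y^\ast\omega_Y\otimes\pr_Z^\ast\omega_Z$, and $\omega=\pr_Y^\ast\omega_1\wedge\pr_Z^\ast\omega_2$ is a nowhere-vanishing top form. Moreover, for every $m$ coprime with $\chara k$ one has $(Y\times Z)(m)=Y(m)\times_{K(m)}Z(m)$, and, since pull-back commutes with exterior product, the pull-back of $\omega$ along the base-change map is again $\pr^\ast\omega_1(m)\wedge\pr^\ast\omega_2(m)$. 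So the whole problem reduces to showing that $Z_{Y\times Z,\omega}(T)$ is the coefficient-wise product of $Z_{Y,\omega_1}(T)$ and $Z_{Z,\omega_2}(T)$.

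To obtain this, I would fix $m$ and choose weak Néron models $\mathfrak{Y}\to\Delta(m)$ of $Y(m)$ and $\mathfrak{Z}\to\Delta(m)$ of $Z(m)$, and check that $\mathfrak{Y}\times_{\Delta(m)}\mathfrak{Z}$ is a weak Néron model of $Y(m)\times Z(m)$: it is smooth over $\Delta(m)$, its generic fibre is $Y(m)\times Z(m)$, and a morphism from a finite étale $\Delta(m)$-scheme into the fibre product is precisely a pair of such morphisms, so the weak extension property for the two factors yields it for the product. Since $k$ is algebraically closed, $\pi_0$ of the special fibre of the product model is $\pi_0(\mathfrak{Y}_0)\times\pi_0(\mathfrak{Z}_0)$, with $[C_1\times C_2]=[C_1]\cdot[C_2]$ in $K_0(\Var_k)$; and because the top exterior power of $\Omega$ of a product is the external tensor product of the two, the order function is additive, $\ord_{C_1\times C_2}\bigl(\pr^\ast\omega_1(m)\wedge\pr^\ast\omega_2(m)\bigr)=\ord_{C_1}(\omega_1(m))+\ord_{C_2}(\omega_2(m))$. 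Summing the defining expression of the motivic integral over the components of the product model then gives $\int_{(Y\times Z)(m)}\omega(m)\,d\mu=\bigl(\int_{Y(m)}\omega_1(m)\,d\mu\bigr)\cdot\bigl(\int_{Z(m)}\omega_2(m)\,d\mu\bigr)$, which is exactly the asserted identity of $T^m$-coefficients.

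Finally I would pass to $\overline{\mathcal{R}}$ via the localisation (resp.\ completion) map, which respects products of coefficients, and apply Proposition~\ref{prop:prodfun} with $s=2$, $F_1=Z_{Y,\omega_1}$ and $F_2=Z_{Z,\omega_2}$: this shows that $Z_{Y\times Z,\omega}$ is rational in $\overline{\mathcal{R}}[[T]]$ and that each of its poles has the form $q=q_1+q_2$ with $q_i$ a pole of $Z_{Y,\omega_i}$. Since $Y$ and $Z$ satisfy the monodromy conjecture in $\overline{\mathcal{R}}$, the numbers $e^{2\pi iq_1}$ and $e^{2\pi iq_2}$ are eigenvalues of the tame monodromy operators on the $l$-adic cohomology of $Y_{\overline{K}}$ and $Z_{\overline{K}}$ respectively; by the Galois-equivariant Künneth isomorphism the monodromy operator on $H^\ast((Y\times Z)_{\overline{K}},\Q_l)$ is the tensor product of the two, so its eigenvalues are exactly the products of theirs, and in particular $e^{2\pi iq}=e^{2\pi iq_1}e^{2\pi iq_2}$ is one of them, which is the monodromy conjecture in $\overline{\mathcal{R}}$ for $Y\times Z$. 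I expect the main obstacle to be the second paragraph, namely establishing carefully that the motivic integral is multiplicative for external products — the additivity of $\ord$ under the external wedge together with the compatibility of the component decomposition of the special fibre of the product model; once this is in place, the appeal to Proposition~\ref{prop:prodfun} and the Künneth step are routine.
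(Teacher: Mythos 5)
Your proposal is correct and follows essentially the same route as the paper: multiplicativity of the motivic integral coefficient by coefficient, Proposition~\ref{prop:prodfun} with $s=2$ to show every pole of $Z_{Y\times Z,\omega}$ is a sum $q_1+q_2$ of poles of the factors, and the Künneth/tensor-of-eigenvectors step to produce the monodromy eigenvalue $e^{2\pi i q}$. The only difference is that you supply the verification that the motivic integral is multiplicative (product of weak Néron models, additivity of $\ord$), which the paper asserts without proof.
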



\subsubsection{} We start with a simple remark:
\begin{remark}
For an arbitrary positive integer $l>0$ we have that
$$ l\cdot Z_{X(l),\omega(l)}(T^l)= \sum_{i=0}^{l-1}Z_{X,\omega}(\zeta_l^i T) \virgola $$
where we consider the functions as power series with coefficient in an algebraic extensions of $\mathcal{R}$ containing the $l-$th roots of unity (though after the due cancellations, the above equation involves only elements of $\mathcal{R}$).
Thus, by writing $Z_{X,\omega}(T)$ in the form Equation \eqref{eqn:zetaK3form}, with $N$ divisible by $l$, we see that the set of poles of $Z_{X(l),\omega(l)}(T)$ is contained in $l\cdot \pazocal{P}$.
\end{remark}

\subsubsection{} Using this remark and the results from \S \ref{sect:studysym} we get an upper bound on the set of poles of $Z_{\Hilb^n(X),\omega^{[n]}}(T)$:

\begin{corollary}
\label{cor:polHilb}
Let $X$ be a surface with trivial canonical bundle and $\omega$ a volume form on it. Assume that $Z_{X,\omega}(T)\in \mathcal{R}[[T]]$ can be written as a sum of functions with only one pole. Let $\pazocal{P}$ be the set of poles of $Z_{X,\omega}$. Then all the poles of $Z_{\Hilb^n(X),\omega^{[n]}}(T)$ are contained in $\Sigma^n \pazocal{P}$.
\end{corollary}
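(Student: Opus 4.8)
The plan is to read the inclusion directly off the closed formula \eqref{eqn:explZ} together with the structural results of \S \ref{sect:forser}. First I would fix the interpretation of the right-hand side of \eqref{eqn:explZ}: for a partition $\alpha\dashv n$ the corresponding summand is the coefficientwise (Hadamard) product, taken over the \emph{finite} set of indices $j$ with $\alpha_j>0$, of the power series $\L^{(j-1)\alpha_j}\Sym^{\alpha_j}\bigl(Z_{X(j),\omega(j)}\bigr)(T)$, where $\Sym^{\alpha_j}$ is applied coefficient by coefficient; this is precisely the shape of series to which Lemma \ref{lem:polestot} and Proposition \ref{prop:prodfun} apply. Since the pole set of a finite sum of rational functions is contained in the union of the pole sets of its summands, it then suffices to show that each $\alpha$-summand has its poles inside $\Sigma^n\pazocal{P}$.

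Next, fixing $\alpha\dashv n$, I would argue as follows. By the remark preceding this corollary, $Z_{X(j),\omega(j)}(T)$ is rational with poles contained in $j\cdot\pazocal{P}$; moreover it is again a sum of functions with a single pole, which is free in $\overline{\mathcal{R}}$ by Lemma \ref{lem:seppol} and in $\mathcal{M}_k$ follows from the identity $j\,Z_{X(j),\omega(j)}(T^j)=\sum_{i=0}^{j-1}Z_{X,\omega}(\zeta_j^i T)$ of that remark, on choosing $N$ divisible by $j$ (then the substitutions $T\mapsto\zeta_j^i T$ fix every denominator $1-\L^{-qN}T^N$). Applying Lemma \ref{lem:polestot} to $\Sym^{\alpha_j}$ then gives that $\Sym^{\alpha_j}\bigl(Z_{X(j),\omega(j)}\bigr)$ is rational, again a sum of functions with a single pole, with poles contained in $\Sigma^{\alpha_j}(j\pazocal{P})$; multiplying by the unit $\L^{(j-1)\alpha_j}$ leaves the pole set unchanged. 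Feeding these finitely many factors into Proposition \ref{prop:prodfun} --- in its "sum of functions with a single pole" variant when $\mathcal{R}=\mathcal{M}_k$ --- then shows that the $\alpha$-summand is rational with poles contained in the Minkowski sum $\sum_{j\,:\,\alpha_j>0}\Sigma^{\alpha_j}(j\pazocal{P})$.

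It remains to absorb this Minkowski sum into $\Sigma^n\pazocal{P}$, which is an elementary count: for $p\in\pazocal{P}$ one has $jp=\underbrace{p+\dots+p}_{j}$, so $\Sigma^{\alpha_j}(j\pazocal{P})=j\cdot\Sigma^{\alpha_j}(\pazocal{P})\subseteq\Sigma^{j\alpha_j}(\pazocal{P})$, and combined with the obvious $\Sigma^a\pazocal{P}+\Sigma^b\pazocal{P}\subseteq\Sigma^{a+b}\pazocal{P}$ this gives $\sum_{j\,:\,\alpha_j>0}\Sigma^{\alpha_j}(j\pazocal{P})\subseteq\Sigma^{\sum_j j\alpha_j}\pazocal{P}=\Sigma^n\pazocal{P}$, using $\alpha\dashv n$. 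Taking the union over all partitions $\alpha\dashv n$ then finishes the argument. I expect the only genuinely delicate point to be the bookkeeping of the middle paragraph: checking that $Z_{X(j),\omega(j)}$, its symmetric powers, and the coefficientwise products occurring in \eqref{eqn:explZ} all retain the "sum of functions with a single pole" property, so that the $\mathcal{M}_k$-versions of Lemma \ref{lem:polestot} and Proposition \ref{prop:prodfun} are genuinely applicable in the weakest of the three rings; over $\overline{\mathcal{R}}$ this is automatic by Lemma \ref{lem:seppol}, and everything else is a mechanical chaining of \eqref{eqn:explZ}, Lemma \ref{lem:polestot} and Proposition \ref{prop:prodfun}.
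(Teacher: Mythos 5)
Your proposal is correct and follows essentially the same route as the paper: decompose $Z_{\Hilb^n(X),\omega^{[n]}}$ by partitions $\alpha\dashv n$, control each factor $\Sym^{\alpha_j}(Z_{X(j),\omega(j)})$ via Lemma \ref{lem:polestot} and the remark on $Z_{X(l),\omega(l)}$, combine the factors of the Hadamard product with Proposition \ref{prop:prodfun}, and conclude with the inclusion $\Sigma^{\alpha_1}\pazocal{P}+\Sigma^{2\alpha_2}\pazocal{P}+\cdots+\Sigma^{n\alpha_n}\pazocal{P}\subseteq\Sigma^{n}\pazocal{P}$. Your extra care in verifying that the ``sum of functions with a single pole'' property propagates through each step is a welcome elaboration of a point the paper treats implicitly.
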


\begin{proof}
Let us write $Z_{X(j),\omega(j)}/\mu_j(T)=\sum_{i\geq 0} A_i^{(j)}T^i$. For each $\alpha\dashv n$, let 
$$F_\alpha(T)\coloneqq  \sum_{i>0}\left(\Sym^{\alpha_1}A_i^{(1)}\right)\cdots\left(\Sym^{\alpha_n}A_i^{(n)}\right)T^i \punto $$
According to Equation \eqref{eqn:formulan}, we have that $Z_{\Hilb^n(X),\omega^{[n]}}(T)=\sum_{\alpha \dashv n}\L^{n-|\alpha|} F_\alpha(T)$, thus we only need to prove that $F_\alpha$ has only poles inside $\Sigma^n \pazocal{P}$. Lemma \ref{lem:polestot} implies that $(\Sym^{\alpha_j} Z_{X(j),\omega(j)}/\mu_j)(T)$ has poles in $\Sigma^{\alpha_j}(j\pazocal{P})\subseteq \Sigma^{j\alpha_j}\pazocal{P}$; our statement follows from Proposition \ref{prop:prodfun} and from the identity
$$ \Sigma^{\alpha_1}\pazocal{P} + \Sigma^{2\alpha_2}\pazocal{P}+\cdots+\Sigma^{n\alpha_n}\pazocal{P}=\Sigma^{n}\pazocal{P} \punto $$
\end{proof}

\subsubsection{} We are now ready to prove Theorem \ref{thm:monconjHn}:
\begin{proof}[Proof of Theorem \ref{thm:monconjHn}]
Let $q$ be a pole of $Z_{\Hilb^n(X),\omega^{[n]}}(T)$ and let $\sigma\in \Gal(\overline{K}|K)$ be a topological generator of the tame Galois subgroup. Consider poles  $q_1,\dots,q_n$ of $Z_{X,\omega}(T)$ such that $q=q_1+\cdots+q_n$. Since the monodromy conjecture holds for $X$, there are elements $v_1,v_2,\dots,v_n\in H^\ast(X_{\overline{K}},\Q_l)$ such that $\sigma(v_j)=e^{2\pi i q_j}v_j$.
Let us consider the Galois-equivariant isomorphism from \cite[Theorem 2]{GS}:
$$ H^\ast(\Hilb^n(X),\Q_l)\cong \bigoplus_{\alpha \dashv n} H^\ast(\Sym^{|\alpha|}(X),\Q_l)(n- |\alpha|) \puntovirgola $$
focusing on the summand $H^\ast(\Sym^n(X),\Q_l)\cong H^\ast(X^n,\Q_l)^{\Sigma_n}$, where the action of $\Sigma_n$ on \\ $H^\ast(X^n,\Q_l)\cong H^\ast(X,\Q_l)^{\otimes n}$ is induced by the usual action $\Sigma_n\curvearrowright X^n$ given by permutation of the factors. Thus the element
$$ \sum_{\rho\in \Sigma_n}v_{\rho(1)}\otimes \cdots \otimes v_{\rho(n)} $$
is a non-zero eigenvector of $H^\ast(\Hilb^n(X),\Q_l)$ for the eigenvalue $\prod_{j=1}^n e^{2\pi i q_j}$.
\end{proof}

\subsubsection{} And similarly:
\begin{proof}[Proof of Proposition \ref{prop:monconjprod}.]
We have that $\displaystyle \int_{Y(n) \times Z(n)}\omega(n)= \left(\int_{Y(n)}\omega_1(n) \right) \left( \int_{Z(n)} \omega_2(n) \right)$. 

Hence Proposition \ref{prop:prodfun} implies that $\forall q$ pole of $Z_{Y\times Z, \omega}(T)$ there are a pole $q_1$ of $Z_{Y, \omega_1}(T)$ and a pole $q_2$ of $Z_{Z, \omega_2}(T)$ such that $q=q_1+q_2$. 

Since $Y$ and $Z$ satisfy the monodromy conjecture, there are nonzero eigenvectors $v\in H^\ast(Y, \Q_l)$ with eigenvalue $\exp(2\pi i q_1)$ and $w\in H^\ast(Z, \Q_l)$ with eigenvalue $\exp(2\pi i q_2)$, so that the element $v\otimes w\in H^\ast(Y, \Q_l)\otimes H^\ast(Z, \Q_l) \cong H^\ast(Y\times Z, \Q_l)$ is an eigenvector with eigenvalue $\exp(2 \pi i q)$.
\end{proof}

\subsubsection{}  We are not able to say much about the monodromy conjecture in $\mathcal{M}_k$ for all the Hilbert schemes of points on a surface, since it is not always possible to write $Z_{X,\omega}(T)\in \mathcal{M}_k[[T]]$ as a sum of functions with a single pole. However, there are a few remarkable classes of surfaces whose zeta function has a unique pole. In these cases, such a condition
is automatically satisfied, so also $Z_{\Hilb^n(X),\omega^{[n]}}$ has a unique pole which is $n$ times the pole of $Z_{X,\omega}$ and $\Hilb^n(X)$ will then satisfy the monodromy property.

\begin{example}
We list a few classes of surfaces satisfying the property above:
\begin{itemize}
\item Assume that $X$ is an abelian surface; according to \cite{HN_Ab}, $Z_{X,\omega}$ has a unique pole which coincides with Chai's basechange conductor of $X$;
\item If $X\rightarrow \Spec K$ is a $K3$ surface admitting an equivariant Kulikov model after a finite base change with respect to afinite extension $F/K$, then Halle and Nicaise proved in \cite{HN} that $Z_{X,\omega}$ has a unique pole;
\item Assume that $X$ is a Kummer surface constructed from an abelian surface $A$; then Overkamp proved in \cite{Ove} that $Z_{X,\omega}$ has a unique pole.
\end{itemize}
Moreover all the surfaces in this list satisfy the monodromy property.
\end{example}

\begin{corollary}
Let $X$ be a surface in the list above, then the monodromy conjecture holds for $\Hilb^n(X)$, provided that either $\chara k=0$ or $\chara k >n$, with the usual assumptions on the models of $X$. 

Similarly, the monodromy conjecture holds for a product $X_1\times \dots \times X_n$, where all the $X_i$ are surfaces in the list above.
\end{corollary}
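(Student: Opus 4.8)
The plan is to deduce both assertions from the results already in hand: Theorem \ref{thm:monconjHn} for the Hilbert schemes and Proposition \ref{prop:monconjprod} for the products, after observing that every surface in the list is governed by the single-pole case of the theory. First I would recall, from the Example immediately preceding the statement, that for each such surface $X$ the motivic zeta function $Z_{X,\omega}(T)$ has a \emph{unique} pole $q\in\Q$ and that $X$ satisfies the monodromy property, i.e. $e^{2\pi i q}$ is a monodromy eigenvalue of $X$. I would then note that a rational function having a single pole is trivially a sum of functions with only one pole, so the hypotheses of Corollary \ref{cor:polHilb} are met; moreover, when $Z_{X,\omega}$ has exactly one pole, the monodromy conjecture for $X$ in $\mathcal{M}_k$, in $\overline{\mathcal{R}}$ and in $\widehat{\mathcal{M}}_k$ all coincide with the single assertion that $e^{2\pi i q}$ is a monodromy eigenvalue. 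In particular $X$ satisfies the monodromy conjecture in $\overline{\mathcal{R}}$, which is precisely the hypothesis of Theorem \ref{thm:monconjHn}.

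Next I would invoke Theorem \ref{thm:monconjHn}: if $\chara k=0$, or if $\chara k=p>0$ with $p>n$ and $X$ admits a model as in \S \ref{subsect:constwnm}, then $\Hilb^n(X)$ satisfies the monodromy conjecture in $\overline{\mathcal{R}}$. I would moreover sharpen this using Corollary \ref{cor:polHilb}: since the set of poles $\pazocal{P}$ of $Z_{X,\omega}$ equals $\{q\}$, the set of poles of $Z_{\Hilb^n(X),\omega^{[n]}}$ is contained in $\Sigma^n\pazocal{P}=\{nq\}$, so $Z_{\Hilb^n(X),\omega^{[n]}}$ has at most the single pole $nq$. Combining this with the construction used in the proof of Theorem \ref{thm:monconjHn} — a nonzero eigenvector $v\in H^\ast(X_{\overline K},\Q_l)$ with eigenvalue $e^{2\pi i q}$ yields the nonzero $\Sigma_n$-invariant vector $v^{\otimes n}\in H^\ast(X^n,\Q_l)^{\Sigma_n}\subseteq H^\ast(\Hilb^n(X),\Q_l)$ with eigenvalue $e^{2\pi i nq}$ — one concludes that the monodromy conjecture for $\Hilb^n(X)$ in fact holds already in $\mathcal{M}_k$, hence in all three rings.

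For the product $X_1\times\cdots\times X_n$, endowed with the volume form $\omega\coloneqq \pr_1^\ast\omega_1\wedge\cdots\wedge\pr_n^\ast\omega_n$, I would argue by induction on $n$ using Proposition \ref{prop:monconjprod}: each $X_i$ satisfies the monodromy conjecture in $\overline{\mathcal{R}}$, so $X_1\times X_2$ does, then $(X_1\times X_2)\times X_3$ does, and so on up to $X_1\times\cdots\times X_n$. As in the Hilbert scheme case, Proposition \ref{prop:prodfun} shows that the zeta function of the product has the single pole $q_1+\cdots+q_n$, where $q_i$ is the unique pole of $Z_{X_i,\omega_i}$, so the conclusion again upgrades to the monodromy conjecture in $\mathcal{M}_k$.

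The argument is essentially bookkeeping of results already proved, and I do not foresee a genuine obstacle. The only point requiring care — and the closest thing to one — is the passage between the three versions of the conjecture: one must use that the single-pole hypothesis trivialises their distinction, so that the \emph{monodromy property} quoted in the Example really does provide the hypothesis of Theorem \ref{thm:monconjHn} as stated in $\overline{\mathcal{R}}$, and one must check that the model hypotheses of \S \ref{subsect:constwnm} are satisfied for the surfaces in the list, which is automatic when $\chara k=0$.
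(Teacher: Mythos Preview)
Your proposal is correct and follows the same route as the paper: the paper's proof simply cites Theorem~\ref{thm:monconjHn} for the first assertion and Proposition~\ref{prop:monconjprod} for the second, exactly as you do. Your additional remarks---spelling out that the single-pole hypothesis makes Corollary~\ref{cor:polHilb} applicable already over $\mathcal{M}_k$ and that the three versions of the conjecture then collapse into one---are a welcome elaboration of the paragraph preceding the corollary, but they do not change the underlying argument.
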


\begin{proof}
The first statement follows from \ref{thm:monconjHn}, while the latter follows from \ref{prop:monconjprod}.
\end{proof}

\subsection{Further remarks}

\subsubsection{} It is still an open question whether all the sums of $n$ poles of $Z_{X,\omega}(T)$ are actually poles of $Z_{\Hilb^n(X),\omega^{[n]}}(T)$, or if cancellation might occur. It is reasonable to expect that given a very general $K3$ surface $X$ and a positive integer $n$, if $\pazocal{P}$ is the set of poles of $Z_{X,\omega}$, then the set of poles of $Z_{Hilb^n(X),\omega^{[n]}}$ coincides with $\Sigma^n\pazocal{P}$. We expect this in virtue of the fact that the expression for $Z_{\Hilb^{n}(X),\omega^{[n]}}$, obtained by following the algorithm of Corollary \ref{cor:polHilb} and \S \ref{sect:studysym}, contains terms having a pole in each element of $\Sigma^n\pazocal{P}$ and the cancellation among them "should happen only exceptionally".

\begin{example}
Let $K\coloneqq k((t))$, $R\coloneqq k[[t]]$, where $\chara k=0$. Let $X\subseteq \P^3_K$ (with homogeneous coordinates $[w:x:y:z]$) be the surface defined by the quartic polynomial:
$$ w^2x^2+w^2y^2+w^2z^2+x^4+y^4+z^4+tw^4 \virgola $$
and let, finally, $\omega$ be an arbitrary volume form over it; this example was already studied in \cite{HN}. It is possible to prove that $Z_{X,\omega}$ has two poles and satisfies the monodromy conjecture in $\mathcal{M}_k$. A direct computation (relying on a construction we will sketch later) shows that the poles of $Z_{\Hilb^2(X),\omega^{[2]}}$ are actually the three expected poles. 
Let $\mathfrak{Y}\subseteq \P^3_R$ the model of $X$ obtained by the above equation considered as a polynomial with coefficients in $R$. The model constructed in this way is a regular model whose central fibre is an irreducible surface with only a singular point $O$ of type $A_1$. After blowing up $O\in \mathfrak{Y}$ one obtains a regular model with strict normal crossing divisor, whose central fibre consists of two components: a regular $K3$ surface (the strict transform of $\mathfrak{Y}_k$ and a copy of $\P^2_k$ with multiplicity 2, their intersection is a rational curve which sits in $\P^2$ as a conic. After semistable reduction, one gets a model $\mathfrak{X}(2)\rightarrow R(2)$ of $X(2)$ whose central fibre consists of5 a smooth $K3$ surface intersecting $\P^1\times \P^1$ along its diagonal. Using the construction of Nagai in \cite{Nag} one gets a semistable model for $\Hilb^2(X(2))$ over $R(2)$ and after basechanging such model and Weil-restricting it is possible to compute the motivic integral of $\Hilb^2(X(m))$ for all $m\in \N$ and 
thus $Z_{\Hilb^2(X),\omega^{[2]}}\in \mathcal{M}_k[[T]]$. After specializing the zeta function using the Poincaré polynomial one sees that all the three possible poles are indeed poles for $Z_{\Hilb^2(X),\omega^{[2]}}$.
\end{example}


\vfill

{\textsc{University of Copenhagen, department of Mathematical Sciences, \\
Universitetparken 5, 2100 Copenhagen, Denmark}} 

\emph{E-mail address}: luigi.pagano@math.ku.dk

\end{document}